 \newtheorem{thm}{Theorem}[section]
 \newtheorem{lem}[thm]{Lemma}
 \theoremstyle{definition}
 \numberwithin{equation}{section}
\newtheorem{theorem}{Theorem}[section]
\theoremstyle{definition}
\newtheorem{definition}[theorem]{Definition}
\theoremstyle{remark}
\newtheorem{remark}[theorem]{Remark}
\begin{document}
\title{Vanishing Viscosity Limit
 for  Incompressible Viscoelasticity in Two Dimensions}

\author{Yuan Cai\footnote{School of Mathematical Sciences, Fudan University, Shanghai 200433, P. R. China.
Email: ycai14@fudan.edu.cn}
\and
Zhen Lei\footnote{School of Mathematical Sciences, Fudan University; Shanghai Center for Mathematical Sciences, Shanghai 200433, P. R. China.
Email: zlei@fudan.edu.cn}
\and
Fanghua Lin
\footnote{Courant Institute of Mathematics, New York University, NY 10012, USA; and Institute of Mathematical
Sciences of NYU-ECNU at NYU-Shanghai, Shanghai 200433, P. R. China. Email: linf@cims.nyu.edu}
\and
Nader Masmoudi
\footnote{Courant Institute of Mathematical Sciences, New York University, NY 10012, USA. Email: mas-
moudi@cims.nyu.edu}
}
\date{}
\maketitle

\begin{abstract}
This paper studies the inviscid limit of the two-dimensional incompressible viscoelasticity,
which is a system coupling a Navier-Stokes equation with a transport equation for the
deformation tensor. The existence of global smooth solutions near the equilibrium with a
fixed positive viscosity was known since the work of \cite{LLZ2005}. The inviscid
case was solved recently by the second author \cite{Lei16}. While the latter was solely
based on the techniques from the studies of hyperbolic equations, and hence the 2D problem
is in general more challenge than that in higher dimensions, the former was relied
crucially upon a dissipative mechanism. Indeed, after a symmetrization and a linearization
around the equilibrium, the system of the incompressible viscoelasticity reduces to an
incompressible system of damped wave equations for both the fluid velocity and the
deformation tensor. These two approaches are not compatible. In this paper, we prove
global existence of solutions, uniformly in both time $t \in [0, \infty)$ and viscosity
$\mu \geq 0$. This allows us to justify in particular the vanishing viscosity limit for
all time. In order to overcome difficulties coming from the incompatibility between the
purely hyperbolic limiting system and the systems with additional parabolic viscous
perturbations, we introduce in this paper a rather robust method which may apply to a wide
class of physical systems of similar nature. Roughly speaking, the method works in two
dimensional case whenever the hyperbolic system satisfies intrinsically a ``Strong Null
Condition". For dimensions not less than three, the usual null
condition is sufficient for this method to work.

\end{abstract}

\maketitle





\section{Introduction}

One of the common manifestations of anomalous phenomenon in complex fluid comes from the elastic effects.
The different rheological and hydrodynamic properties can be attributed to the special coupling between
the transportation of the internal variable and the induced elastic stress. In the variational energetic formulation,
these properties can be attributed to the competition between the kinetic energy and the internal elastic effects
(see, for instance, \cite{LLZ2005}).

For isotropic, hyperelastic and homogeneous incompressible materials, the motion can be
described by the following (fundamental elastodynamic) system
\begin{equation}\label{model11}
\begin{cases}
\partial_tv + v\cdot\nabla v + \nabla p = \nabla\cdot(\frac{\partial W(F)}{\partial F}F^\top),\\[-4mm]\\
\nabla\cdot v = 0.
\end{cases}
\end{equation}
Here $v$ is the velocity field, $p$ the scalar pressure which is the Lagrangian multiplier due to the incompressibility constraint, $W(F)$ the internal elastic energy density and $F$ the deformation tensor.

The deformation tensor $F$ is often presented in a Lagrangian description using a
time-dependent family of orientation-preserving diffeomorphisms
$x(t, \cdot)$, $0 \leq t < T$. Material points $y$ in the
reference configuration are deformed to the spatial positions
$x(t, y)$ at time $t$. We shall use $y(t,x)$ to denote the inverse of $x(t,\cdot)$.
The flow map $x(t, y)$ is determined as usual by the velocity $v(t,x)$ via the following
ODEs:
\begin{equation*}
\begin{cases}
\frac{d x(t,y)}{d t}=v(t,x(t,y)),\\
x(0)=y.
\end{cases}
\end{equation*}
Such map $x(t,y)$ would be uniquely defined whenever the velocity field $v(t,x)$ is in
appropriate Sobolev space \cite{DiPerna}. The deformation tensor is then defined by
\begin{equation*}
\tilde{F}(t,y)=\frac{\partial x(t,y)}{\partial y}.
\end{equation*}
One simply identifies it as $F(t,x(t,y))=\tilde{F}(t,y)$ in the Eulerian coordinates
$(t,x)$.

It is easy to check the incompressible condition is equivalent to $\nabla\cdot F^\top=0$
(see, for instance,  \cite{LLZ2005}). In addition, one can also deduce that
\begin{equation}\label{compatible-equ}
\begin{cases}
\partial_t F+v\cdot\nabla F=\nabla v F, \\[-4mm]\\
F_{mj}\nabla_mF_{ik}=F_{lk}\nabla_l F_{ij},\quad i, j, m, k, l \in \{1, 2, \cdots, n\}.
\end{cases}
\end{equation}
See for examples, \cite{LeiLZ08, LLZ2005}.
The above \eqref{compatible-equ} is essentially the compatibility condition for the
velocity field and the flow map. In what follows, we use the following notations
\begin{equation*}
(\nabla v)_{ij}=\nabla_jv_i, \quad
(\nabla v F)_{ij}=(\nabla v)_{ik} F_{kj}, \quad
(\nabla \cdot F)_i=\nabla_j F_{ij}, \quad
\end{equation*}
and the summation convention over repeated indices will always be applied.

The equations for elastodynamics \eqref{model11} may then be written equivalently as
\begin{equation}\label{model11-1}
\begin{cases}
\partial_tv + v\cdot\nabla v + \nabla p = \nabla\cdot( \frac{\partial W(F)}{\partial F} F^\top),\\[-4mm]\\
\partial_t F+v\cdot\nabla F=\nabla v F,\\[-4mm]\\
\nabla\cdot v = 0,\quad \nabla\cdot F^\top = 0,
\end{cases}
\end{equation}
with the compatible condition $\eqref{compatible-equ}_2$.

Taking into account of viscosity, one leads to the Oldroyd system of viscoelasticity:
\begin{equation}\label{model11-2}
\begin{cases}
\partial_tv + v\cdot\nabla v + \nabla p = \mu\Delta v + \nabla\cdot( \frac{\partial W(F)}{\partial F} F^\top),\\[-4mm]\\
\partial_t F+v\cdot\nabla F=\nabla v F,\\[-4mm]\\
\nabla\cdot v = 0,\quad \nabla\cdot F^\top = 0.
\end{cases}
\end{equation}
Here $\mu \geq 0$ denotes the fluid viscosity. We notice that the nonlinear coupling structure in \eqref{model11-2} is universal, and it appeared in many physical equations including magneto-hydrodynamic equations and liquid
crystal flows, see \cite{Lin}.

A main goal of this paper is to justify the global-in-time inviscid limit from the
viscoelastic system  \eqref{model11-2} to the elastic system  \eqref{model11-1} in two
dimensions. More precisely, we will show that smooth solutions to \eqref{model11-2} in certain
weighted Sobolev spaces exist uniformly in time $t \geq 0$  and $\mu  \geq 0$. This
allows us to justify the vanishing viscosity limit for all time.

The presence of viscosity requires the use of Eulerian coordinates. Following the standard vector fields method of Klainerman and the ``ghost weights" method of Alinhac,
a number of rather essential difficulties appear due to the incompatibility between
these methods needed for the limiting hyperbolic systems
and the equations in the limiting process that possess additional
parabolic viscous terms. In particular the viscous terms would result in ``bad"
commutators. A reformulation of the system in these coordinates seems necessary in order
for us to identify a stronger notion of null condition, which is essential in 2D case.
With this strong null condition we will be able to do various modifications on
Klainerman's and Alinhac's methods. We shall discuss it in more details in subsection 2.2
below.

\subsection{A Review of Related Results}

The study of   dynamics of  isotropic, hyperelastic and homogeneous materials has a long history.
 Compressible elastodynamic  systems  (commonly referred as elastic waves in literature),
are  quasilinear wave type systems with multiple wave speeds.
For three-dimensional elastic waves, John \cite{John88} showed the existence of almost global solutions
for small displacement (see also \cite{KlainermanSiderisS96}).
On the other hand, John \cite{JohnBook} proved that a genuine nonlinearity condition leads to
formations of finite time singularities for spherically symmetric, arbitrarily small but
non-trivial displacements (see \cite{T98} for large displacement singularities). When the
genuine nonlinearity condition is excluded,  the existence of global small solutions may
be expected even in non-symmetric cases.  The difficulty in obtaining global
solutions lies in the understanding of the interaction between the fast pressure waves and
slow shear waves at a nonlinear level. A breakthrough is due to Sideris \cite{Sideris96, Sideris00} and also Agemi \cite{Agemi00}, under a nonresonance condition which is
physically consistent with the system.
The proof of Sideris  is based on the vector field method of Klainerman
\cite{Klainerman85, Klainerman86} and the weighted Klainerman-Sideris $L^2$ energy
(introduced in their earlier work \cite{KlainermanSiderisS96}). The proof of Agemi
relies on a direct estimate of the fundamental solution. We note that the
nonresonance condition complements John's genuine nonlinearity condition. With an
additional repulsive Poisson term, a global existence was established in \cite{HM17} which
allows a general form for the pressure.

For the incompressible elastodynamics, the only waves presented in the
isotropic systems are shear waves which are linearly degenerate. The
global-well-posedness was obtained by Sideris and Thomases in \cite{SiderisThomas05,
SiderisThomas07} (see \cite{ST06} for a  unified treatment, and \cite{LW} for some
improvement on the uniform time-independent bounds on the highest order energies).
Based on the aforementioned achievements, the theory of global existence of
solutions for the three-dimensional elastic waves with small initial data is
relatively satisfactory.

In  the two dimensional case, the proof of long time existence  for the
elastodynamics is more difficult due to the weaker time decay rate.
The first large time existence result is the recent work  \cite{LSZ13} where the
authors showed the almost global existence for the  two-dimensional incompressible
elastodynamics in Eulerian coordinates. By observing an improved null structure
for the system in Lagrangian coordinates (see also discussions in subsection 2.2),
the second author \cite{Lei16} proved  the global well-posedness using the energy
method of Klainerman and Alinhac's ghost weight approach. Afterwards,
Wang \cite{WangXueCheng2014} gave a new proof of this latter result using
space-time resonance method \cite{GMS12} and a normal form transformation.

When the viscosity is present and strictly positive, the global well-posedness
near equilibrium state was first obtained in \cite{LLZ2005} for the two-dimensional
case. In this case, after a symmetrization and linearization around the equilibrium
state, the system becomes a non-standard (incompressible) damped wave systems for both
velocity field and the deformation tensor, see also \cite{Lin,LZ2008}. This method works
both in 2D and 3D cases. Lei and Zhou \cite{LZ2005} obtained similar results by working
directly on the equations for the deformation tensor through an incompressible limit
process. For many related discussions we refer to  \cite{LeiLZ08} and \cite{CZ2006} and
\cite{HX2010,HL2016,HW2011,Lei10,Lei14,Lei3,M13,QZ2010,ZF} and the references therein.
In all these works, a dissipative structure of the viscoelastic systems (with a
strictly positive viscosity) is a key ingredient to study the long time behavior.
Thus the size of the initial data depends on the viscosity in order to have global in
time existence. Consequently, these arguments can not be applied to study the vanishing
viscosity problem. For the latter one has to deal a nonlinear coupled system of
equations in which both parabolicity and hyperbolicity can't be ignored.

Similar to the study of vanishing viscosity limits for classical fluid
dynamics, one expects that when the fluid viscosity goes to zero, the limit of
solutions to the viscoelastic system converges to a solution to the
elastodynamic system. In the case of Navier-Stokes equations, a lot have
been learned since the work of Kato \cite{Kato72} and Swann \cite{Swann} (see
also a recent version \cite{Masmoudi2007}). These results are not expected to
hold globally in time. If one tries to prove a global in time convergence,
the matter is completely different.

The work of Kessenich \cite{Kessenich} established the global
well-posedness theory for three-dimensional incompressible viscoelastic materials
uniformly in the viscosity and in time. Here, though the presence of viscosity
prevents natural hyperbolic scaling invariance, Kessenich used nevertheless the
scaling operator. His strategy is to apply first this operator directly to the system,
then to deal with the commutators between the scaling operator and the viscosity
terms. Sufficiently fast decay rates in 3D are the key. Another important ingredient in
\cite{Kessenich} is a Hardy's type estimate. It is used to compensate the
derivative loss problem caused by commuting with the viscous terms. In the two-dimensional
case neither of these two key steps can be accomplished easily. One of the main
reasons is that, while the ghost weight of Alinhac seems to be a necessary tool
for the highest order energy estimates in the two dimensional problems, one can not
directly apply it here because it would create extra non-decaying terms involving
commutators with the viscous term.

Let us also discuss some closely related historical works on quasilinear wave
type equations. For quasilinear wave equations in dimension three, and for small
initial data, one can obtain an almost global existence \cite{JohnKlainerman84}.
When the spatial dimensions are not bigger than three, the global existence would depend
on two basic assumptions: the initial data should be sufficiently small and the
nonlinearities satisfy a type of null condition \cite{Sideris00}. For nonlinear wave
equations with sufficiently small initial data and the null condition is not satisfied,
the finite time blow-up was shown by John \cite{John81}, Alinhac \cite{Alinhac00b} in 3D, and
by Alinhac \cite{Alinhac99a,Alinhac99b, Alinhac01b} in the two-dimensional case.
Under the null condition, the  fundamental work on global solutions for three
dimensional scalar wave equation were obtained by Klainerman \cite{Klainerman86} and by
Christodoulou \cite{Christodoulou86}. In two dimensions, the global solutions were
proven by Alinhac \cite{Alinhac01a} under the null condition and under the assumption that the initial data is compactly supported.

\subsection{Difficulties and Key Ideas}
To simplify the presentation, we will focus only on the Hookean elasticity which
corresponds to $W(F)=\frac12|F|^2$. The general case differs only by the cubic and higher order terms,
which won't make much difference in our arguments, see also the comments in
\cite{Lei16,WangXueCheng2014}. In the zero viscosity limit, the
viscoelastic systems tend to a hyperbolic system. One would naturally try to follow the
generalized energy method of Klainerman. An attractive feature of this method is of
course that it suffices to use the weighted Sobolev inequalities involving the invariance
of the system: translations, rotations, scaling and the Lorentz invariance. It avoids the
delicate estimates of fundamental solutions of wave equations \cite{Sideris00}.
Similarly, the Alinhac's ghost weight method may enable one to apply Klainerman's
generalized energy method to the two-dimensional wave equations \cite{Alinhac01a}.
As Alinhac's method seems to be a most valuable tool currently to get the highest
order energy estimates for two dimensional problems in such a way that it could
lead to a critical decay in time. The latter is needed for global in time existence,
see for examples \cite{Alinhac01a, Lei16, LSZ13}.

As in Alinhac's works, we would introduce ``good unknowns" and explore certain damping mechanism for these ``good unknown"
due to the outgoing energy flux when ghost weights are applied (very much like the excess term in the energy monotonicity formulae). In the standard energy estimates, the viscosity may also give rise to some dissipative effects which is a good news.  However, due to the additional viscous terms which violate hyperbolic scaling, it creates
various ``bad commutators" when either the vector field method or the method of ghost
weights are applied (see \eqref{test}). As we mentioned earlier, the ghost weights
are not needed in the 3D case as one has already established the critical decay in
time in estimates of the highest order energies (see \cite{Kessenich}) without using the usual null condition assumption. In addition, there is, see \cite{Kessenich} a Hardy type inequality which is useful for one to get around the difficulties caused by viscosity. Hence for the two-dimensional case, we definitely need a new strategy.

Let us observe more closely how the ghost weight method causes new problems with the highest order energy estimates for the viscoelastic systems:
Let $\omega = x/|x|$, $\sigma = r- t$ and $q(\sigma) = \arctan\sigma$.
Suppose one tries to estimate the highest order energy for the viscoelastic
systems \eqref{model11-2} with the ghost weight $e^{q(\sigma)}$, formally one has
\begin{eqnarray} \label{test}
&&\frac12\frac{d}{dt}\int_{\mathbb{R}^2}(|Z^{\kappa }v|^2+|Z^{\kappa }(F-I)|^2 )e^q dx\nonumber\\
&&\quad +\ \mu\int_{\mathbb{R}^2}|\nabla Z^{\kappa } v|^2 e^q dx + \frac12\int_{\mathbb{R}^2} \frac{|Z^{\kappa } v + Z^{\kappa }(F-I)\omega|^2
+|Z^{\kappa }(F-I)\omega^\perp|^2
}{1 + \sigma^2}e^qdx \nonumber\\
&&= \frac12\mu \int_{\mathbb{R}^2}|Z^{\kappa } v|^2 \Delta e^q dx + \cdots
\end{eqnarray}
Here $Z$ represents a generalized vector field (see section 2 for
precise definitions). Note the estimate is for the difference $F-I$ as we consider
the problem when the deformation tensor perturbs around the (equilibrium) identity matrix.
The two coercive terms on the second line are due to the viscosity and the ghost weight, respectively.  It will be important, and be clear later on, that we observe the quantities $ v + (F-I) \omega $ and $(F-I) \omega^\perp $ are  ``good unknowns".
Suppose, for the sake of arguments, that we can handle the nonlinear terms
(this is far from being trivial and it will require the notion of Strong
Null Condition), we are still facing the difficulty to obtain the expected energy
estimates. Since the right hand side involves the viscous term, it is
not clear at all that how one can treat them.  In fact, these terms can not be
absorbed directly by the coercive terms since a \textit{spatial derivative} is
missing. Moreover, it is not integrable in time as $|\Delta e^q| \backsimeq 1$
near the light cone $r \backsimeq t$.

Our first idea to solve this  difficulty is to take an advantage of the viscous
terms presented in the energy estimates at lower order derivative levels.
To do so, we apply operators \eqref{test},  with $\nabla  Z^{\kappa -1 } $, instead of $Z^{\kappa }  $,
namely one of the derivatives has to be a spatial regular derivative and we combine it with an energy estimate (without the ghost weight) when operators $Z^{\kappa -1 } $ are applied. The good viscous terms in the
latter lower order energy estimates are then used to absorb the commutators from
former one. In the sequel,  we will use $\mathcal  E_\kappa$ to denote
energy estimates with  $\nabla  Z^{\kappa -1 } $   and $  E_{\kappa-1}$  to denote
energy estimates with  all the vector fields  $Z^{\kappa -1  } $. We will call
$\mathcal  E_\kappa$ the modified  generalized energy  and $ E_{\kappa -1 }$ the generalized energy.
In carrying out this idea, there are a few new difficulties coming from the
nonlinear terms. Indeed, the fact that we only estimate the modified energy at the
highest derivatives level with the ghost weights, it produces only good coercive
terms which contain also at least one spacial derivatives that coming from the
giving ghost weights. The latter would not be sufficient to control the all
nonlinear terms. It is at this stage certain special nonlinear structures coming to
play.

Our second key idea is to transform the viscoelastic system to a ``fully nonlinear" one,
together with a transformed fully nonlinear constraint.
It turns out that in this ``fully nonlinear" system,
the good unknowns in the nonlinear terms always possess an extra spatial derivatives and
thus the transformed system satisfies the ``strong null condition" (see the definition in subsection 2.2).
In fact, since $v$ and $F^\top$ are divergence free, there exist potential functions $V$,
$H=(H_1,H_2)$ such that
$$v=\nabla^\perp V, \quad (F-I)^\top=\nabla^\perp H.$$
Then we can reformulate the system of Hookean viscoelasticity
as follows (see section 2 for a detailed derivation):
\begin{equation}\label{VisElas3}
\begin{cases}
\partial_tV-\mu\Delta V-\nabla\cdot H=\nabla^\perp\cdot
\nabla\cdot \Delta^{-1}\big(-\nabla^\perp V\otimes\nabla^\perp V+\nabla^\perp H\otimes\nabla^\perp H \big), \\[-4mm]\\
\partial_t H -\nabla V=\nabla^\perp H\nabla V, \\
\end{cases}
\end{equation}
with the constraint
\begin{equation} \label{constr}
\nabla^\perp\cdot H=\nabla^\perp H_2 \cdot \nabla H_1.
\end{equation}

As in \cite{Lei16}, the strong null condition would
also mean that in these nonlinear terms, there are always good unknowns in each
individual term. The resulting nonlinear structure permits one to perform various
integration by parts, and to obtain desired decay estimates. For related discussions
on this strong null condition in more general setting for nonlinear wave equations, we refer to a forthcoming preprint \cite{CLM} on a simplified wave model.

Now we state the main result of this paper  as follows:
\begin{thm}\label{thm}
Let $M> 0$, $0<\gamma< \frac18$ be two given constants, $(\nabla V_0,\nabla H_0) \in H^{\kappa-1}_\Lambda$
and $(V_0,H_0)\in H^{\kappa-1}_\Lambda$ with $\kappa\geq 12$.
Suppose that $H_0$ satisfies the constraint \eqref{constr} and
$$\|(\nabla V_0,\nabla H_0)\|_{H^{\kappa-1}_\Lambda}
+\|(V_0,H_0)\|_{H^{\kappa-1}_\Lambda}\leq M,
\quad \|(V_0,H_0)\|_{H^{\kappa-3}_\Lambda}\leq \epsilon.$$
There exists a positive constant $\epsilon_0<e^{-M}$ which depends on $M,\ \kappa,\ \gamma$
such that, if $\epsilon \leq \epsilon_0$, then the incompressible Hookean viscoelastic systems \eqref{VisElas3} with initial data
$$V(x,0)=V_0(x),\quad H(x,0)=H_0(x)$$
has a unique global classical solution such that
\begin{align*}
&\mathcal{E}_{\kappa}(t) + E_{\kappa-1}(t)
+\sum_{|\alpha|+|a|\leq \kappa-1}
\mu \int_0^t\int_{\mathbb{R}^2}\big( |\Delta
\widetilde{S}^\alpha\Gamma^aV(\tau)|^2 +\ |\nabla \widetilde{S}^\alpha\Gamma^a V(\tau)|^2\big) dxd\tau \nonumber\\
&\qquad\leq C_0 M^2 \langle t\rangle ^{\gamma}, \nonumber\\
&E_{\kappa-3}(t)+ \sum_{|\alpha| +|a| \leq \kappa-3}
\mu\int_0^t\int_{\mathbb{R}^2} |\nabla \widetilde{S}^\alpha\Gamma^a V(\tau)|^2 dxd\tau
\leq C_0\epsilon^2e^{C_0M}
\end{align*}
for some $C_0>1$ uniformly for $0\le t<\infty$ and uniformly for $\mu \geq 0$.
\end{thm}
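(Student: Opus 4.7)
The plan is a bootstrap argument on two intertwined energy hierarchies: the modified top-order energy $\mathcal{E}_\kappa$ (in which one derivative is forced to be a regular spatial derivative, i.e.\ the test field is $\nabla\widetilde{S}^\alpha\Gamma^a V$ with $|\alpha|+|a|\leq \kappa-1$) carried with Alinhac's ghost weight $e^{q(\sigma)}$, and the plain generalized energy $E_{\kappa-1}$ obtained by applying the full vector fields $\widetilde{S}^\alpha\Gamma^a$ without the ghost weight. I would simultaneously propagate a slow-growth bound $\mathcal{E}_\kappa+E_{\kappa-1}\leq C M^2 \langle t\rangle^\gamma$ at the top level and a uniform-in-time smallness bound $E_{\kappa-3}\leq C\epsilon^2 e^{CM}$ at the lower level. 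The bootstrap closes because $\gamma<1/8$, so the pointwise decay coming from $E_{\kappa-3}$ (through weighted Klainerman--Sobolev inequalities, as in \cite{Lei16,LSZ13}) beats the slow polynomial growth of the top-level energy in every nonlinear estimate, and because the threshold $\epsilon_0<e^{-M}$ absorbs the $e^{CM}$ factor.

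For the highest-order estimate I would test the system \eqref{VisElas3} with $\nabla\widetilde{S}^\alpha\Gamma^a V$ and $\nabla\widetilde{S}^\alpha\Gamma^a H$ against the ghost weight $e^q$. This produces, as in display \eqref{test}, the two coercive terms: the viscous term $\mu\int|\Delta\widetilde{S}^\alpha\Gamma^a V|^2 e^q\,dx$ and the Alinhac outgoing-flux term $\int\frac{|\mathcal{G}|^2}{1+\sigma^2}e^q\,dx$ acting on the ``good unknowns'' analogous to $\nabla V + \omega\cdot\nabla H$ and $\omega^\perp\cdot\nabla H$. Crucially, the offending ``bad commutator'' $\tfrac12\mu\int|\nabla\widetilde{S}^\alpha\Gamma^a V|^2\Delta e^q\,dx$ now carries one spatial derivative on $V$; since $|\Delta e^q|\lesssim 1$ uniformly, it is bounded by $\mu\int_0^t\!\!\int_{\mathbb{R}^2}|\nabla\widetilde{S}^\alpha\Gamma^a V|^2\,dx\,d\tau$, precisely the viscous dissipation that appears \emph{one derivative lower} in $E_{\kappa-1}$. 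A Gronwall argument then couples the two levels. This is why the two hierarchies must be designed together, with a $\nabla$ forced in $\mathcal{E}_\kappa$: the ``bad'' $\mu$-term at order $\kappa$ is paid for by the ``good'' viscous term at order $\kappa-1$, and no uncontrollable light-cone contribution remains.

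For the nonlinearities I would rely on the fully-nonlinear reformulation \eqref{VisElas3}--\eqref{constr}: each quadratic interaction factors through $\nabla V$ and $\nabla H$ (the potential representation $v=\nabla^\perp V$, $(F-I)^\top=\nabla^\perp H$ guarantees that every nonlinear term carries at least one extra spatial derivative on each factor, which is the \emph{strong null condition} alluded to in subsection~2.2). After commuting $\widetilde{S}^\alpha\Gamma^a$ through the equations, every resulting term either (i) inherits a spatial derivative on one factor, so that the Alinhac weighted coercive norm on $\mathcal{G}$ combined with the pointwise decay $|\nabla V|+|\nabla H|\lesssim \epsilon\langle t+r\rangle^{-1/2}\langle t-r\rangle^{-1/2}$ from $E_{\kappa-3}$ produces an $L^1_t$-integrable bound, or (ii) can be placed on the lower-order side where the decay directly yields an integrable factor times $\mathcal{E}_\kappa^{1/2}$ on the higher-order side. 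The constraint \eqref{constr} is preserved by the flow and is used at the top level to rewrite the linear term $\nabla^\perp\!\cdot H$ as a quadratic null form, avoiding what would otherwise be a non-null commutator. Calder\'on--Zygmund bounds and the commutator identities $[\widetilde{S},\Delta^{-1}]=0$, $[\Gamma,\Delta^{-1}]=0$ handle the nonlocal term $\nabla^\perp\!\cdot\nabla\!\cdot\Delta^{-1}(\cdots)$ in weighted spaces.

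The main obstacle is precisely what the authors themselves flag after \eqref{test}: the $\mu\int|Z^\kappa v|^2\Delta e^q\,dx$ type commutator between the viscous term and the ghost weight is neither absorbable by the coercive terms at the same level nor integrable in time. Overcoming it requires both the coupling of $\mathcal{E}_\kappa$ with $E_{\kappa-1}$ through a regular spatial derivative and the strong null structure of \eqref{VisElas3}; the delicate balance of derivative counts must be engineered so that $\gamma<1/8$ is preserved through every application of Klainerman--Sobolev. A secondary non-trivial task is verifying the strong null structure survives commutation with all the vector fields, in particular for the nonlocal quadratic term and the propagated constraint \eqref{constr}. Once these ingredients are in place, the standard continuity method combined with local well-posedness yields the global classical solution with all bounds uniform in $\mu\geq 0$ and in $t\in[0,\infty)$, from which the vanishing-viscosity limit follows.
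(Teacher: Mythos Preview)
Your proposal is correct and follows essentially the same architecture as the paper: the coupled bootstrap on $\mathcal{E}_\kappa$ (with ghost weight, one forced spatial derivative) and $E_{\kappa-1}$ (no ghost weight), with the bad viscosity--ghost-weight commutator at level $\kappa$ absorbed by the viscous dissipation produced in the $E_{\kappa-1}$ estimate, and the strong null structure of the potential formulation \eqref{VisElas3}--\eqref{constr} closing the nonlinear terms. The paper additionally introduces auxiliary weighted norms $X_\kappa$, $Y_\kappa$ of Klainerman--Sideris type (with a one-derivative loss $X_{\kappa-2}+Y_{\kappa-2}\lesssim E_{\kappa-1}$ caused by the viscosity) and carries out a rather intricate integration-by-parts in the top-order estimate to exhibit a hidden symmetry that cancels the apparent derivative loss from the fully nonlinear, nonlocal right-hand side---points you acknowledge but should expect to require substantial work.
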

Here $E_{\kappa-1}$ and $E_{\kappa-3}$ are generalized energy, $\mathcal{E}_{\kappa}$ is new modified generalized energy.
$\widetilde{S}$ and $\Gamma$ are generalized vector fields. For more detailed
discussions, we refer readers to section 2.
\begin{remark}
Here we only need to assume that the viscosity is smaller than a given constant, say $\mu\leq 1$.
When $\mu\geq 1$, one can use the parabolic method of \cite{LeiLZ08, LLZ2005}
to get the uniform bound. In the following arguments, we will always make this assumption.
\end{remark}

\begin{remark}
One can easily adapt our method to the three dimensional case. In fact, the conclusion in
\cite{Kessenich} could be improved slightly that the uniform bound (in terms of the
viscosity) for the highest order energy holds, see also \cite{LW}.
\end{remark}

\begin{remark}
When there is no viscosity, namely $\mu=0$, the system is reduced to the two-dimensional
incompressible elastodynamics. In this case, our proof of global existence also work, and
it can be substantially simplified as there is no need to use the modified
$\mathcal{E}_\kappa$.
\end{remark}

\begin{remark}
The uniform global a priori estimates allow one to justify the vanishing
viscosity limit by a usual compactness argument, see for examples,
\cite{Kato72,Masmoudi2007,Swann}.
\end{remark}

Let us end this introduction by discussing a couple of additional technical
difficulties that one has to resolve in proving the above theorem.
The first one is the issue of derivative loss due to the presence of
viscous terms, whenever one performs the weighted energy estimates.
Heuristically, for the system of elastodynamics, under some smallness
assumption, one can verify that $X_{\kappa-1}\lesssim E_{\kappa-1}$.
Here, $X_{\kappa-1}$ represents the weighted $L^2$ generalized energy.
We need to clarify here that these quantities here are not
the ones from \cite{Lei16} rather they resemble what were defined in
\cite{SiderisThomas07}  (see section 2 for precise definitions).
However, when the viscosity is present, one can only show that $X_{\kappa -
2}\lesssim E_{\kappa-1}$. Consequently, when one deals with energies outside of
the light cone, one has to be extra careful.
The transformation of the original system into a fully nonlinear one turns out
to be useful here. Its advantage as discussed above is the presence
of an extra spatial derivatives in nonlinear terms. It provides more flexibilities in
using the weighted $L^2$ energies along with the integration by parts. In the 3D case \cite{Kessenich},
Kessenich obtained one extra spatial derivative using a Hardy type inequality  along with the weighted $L^\infty-L^2$ estimate.
But in the 2D case, the Hardy's inequality has an additional logarithmic factor, and it is no longer being useful. To estimate
$X_{\kappa-1}$, we introduce a modified weighted energy $Y_{\kappa-1}$. The latter
is useful to capture a better decay property of the good unknowns as in Alinhac's
works. The estimates for $Y_{\kappa-1}$ are similar to $X_{\kappa-1}$.
Thus the derivative loss problem persists for $Y_{\kappa-1}$ in treating
the highest-order energy estimates as well. Fortunately, at this stage we can
borrow the full ghost weight energies to close the estimates.

The newly formulated \eqref{VisElas3} elastodynamic system becomes a nonlocal
``fully nonlinear" system. Generally speaking, for quasilinear or ``fully nonlinear"
systems, one needs certain symmetries to avoid the derivative loss. For
\eqref{VisElas3}, a careful and lengthy examination of the nonlinearities shows
that the system indeed possesses the desired symmetry.

The proposed method also needs decay in time estimates for the lower-order energies
as usual. For the 2D case,  solutions often decay like $\langle t\rangle^{-\frac
12}$ for wave equations. Since the viscoelastic system satisfies the usual null
condition, one obtains a critical decay for energies and hence the implication of an
almost global existence result, see \cite{LSZ13}.
For the global existence of classical solutions, a "strong null structure" used
here for the system may be needed. One of the contributions of this article is to
show the viscoelastic system possess a Strong Null Condition under Eulerian coordinates. Here it is worth to
point out that, for the scalar quasi-linear wave equations which satisfy
the usual null condition, Alinhac \cite{Alinhac01a} used a Hardy-type inequality
for compactly supported solutions to overcome the issue with critical decays.
Here, due to the nonlocality of terms in the system, the compact support property of the
initial data would not be preserved.

The remaining part of this paper is organized as follows.
In the following section, we will formulate the system of incompressible
elastodynamics in Eulerian coordinate and present its basic
properties. In section 3, we will give some linear and nonlinear estimates,
then the weighted $L^2$ norm and some $L^\infty$ norm will be given.
The last section corresponds to the various higher-order and lower-order energy estimates.

\section{Equations and Its Basic Properties}

In this section, we will rigorously introduce  the concept of ``strong null condition" and will reformulate the system as a ``fully nonlinear" one  in which the  ``strong null condition"  can be verified explicitly. Then we introduce some necessary notations and discuss the vector fields applied to the system.

\subsection{The Equations of Motion}
Due to the presence of viscous term,  we will consider the problem in Eulerian coordinates.
Here, partial derivatives with respect to Eulerian coordinates
are written as $\partial_t=\frac{\partial}{\partial t}$, $\partial_i=\frac{\partial}{\partial x_i}.$
Spatial derivatives are abbreviated as $\nabla=(\partial_1,\partial_2)$.
For convenience, we also use the following notations
\begin{equation}\nonumber
\omega = \frac{x}{r},\quad r = |x|,\quad \omega^\perp= (\omega^\perp_1,\omega^\perp_2)=(-\omega_2,\omega_1),\quad \nabla^\perp =(-\partial_2, \partial_1).
\end{equation}

We shall consider the equations of motion for incompressible Hookean elasticity (general nonlinear elasticity can be treated similarly), which corresponds to the Hookean strain energy functional $W(F) =\frac{1}{2}|F|^2$. When the deformation tensor  perturbs  around its equilibrium,
$F=I+G$, the incompressible viscoelastic system \eqref{model11-2} can be rewritten as
\begin{equation}\label{VisElas1}
\begin{cases}
\partial_tv -\mu\Delta v-\nabla\cdot G= -\nabla p-v\cdot\nabla v + \nabla\cdot(GG^\top),\\[-4mm]\\
\partial_tG-\nabla v =-v\cdot\nabla G+ \nabla v G,\\[-4mm]\\
\nabla\cdot v = 0,\quad \nabla\cdot G^\top=0.
\end{cases}
\end{equation}
In  the two dimensional case, it's easy to see that \eqref{compatible-equ}$_2$ is equivalent to
\begin{equation}\label{constr1}
(\nabla^\perp\cdot  G)_i=G_{l2}\nabla_l G_{i1}-G_{l1}\nabla_l  G_{i2}.
\end{equation}
Before reformulation of the system, let us first explicitly introduce the strong null condition.

\subsection{Strong Null Condition and Reformulation of the Viscoelastic System \eqref{VisElas1}-\eqref{constr1}}

The ``strong null condition" is a more restricted notion of the null condition, which was
 originally introduced and applied in \cite{Lei16} in the proof of the global well-posedness of
incompressible elastodynamics.

We start with the following scalar quasilinear wave equations
\begin{equation}\label{QW}
\partial_t^2 u - \Delta u = Q(\partial u, \partial^2u).
\end{equation}
Here $Q$ is a bilinear form.

\begin{definition}(Strong Null Condition)
We say $Q$ satisfies the strong null condition if
\begin{equation*}
Q(\partial u, \partial^2u) = Q_1(\partial u, g(\partial u)) + \mathcal{R},
\end{equation*}
where ther reminder $\mathcal{R}$ satisfies
\begin{equation*}
|\mathcal{R}| \lesssim \frac{|\partial u||\partial Z u|}{1+t},\quad r \geq \frac{t+1}{2}.
\end{equation*}
Here $g$ is good known in the sense of Alinhac \cite{Alinhac01a}:
$$g(u) = \omega\partial_tu + \nabla u.$$
\end{definition}

\begin{remark}
One can compare the strong null condition with the null condition. We say that $Q$ satisfies null condition if
\begin{equation*} 
Q(\partial u, \partial^2u) = Q_1(\partial u, g(\partial u))+ Q_2(g, \partial^2u) + \mathcal{R},
\end{equation*}
where the reminder term $\mathcal{R}$ satisfies
$$|\mathcal{R}| \lesssim \frac{|\Gamma u||\partial^2u| + |\partial u||\partial\Gamma u|}{1 + t},\quad r \geq \frac{t + 1}{2}.$$
The main point is that by null condition we mean that $Q$ contains at least one good unknowns of  $g(u)$ or $g(\partial u)$. By the \textit{Strong Null Condition}, it requires that  $Q$ contains the good unknown of $g(\partial u)$. In general a quasilinear wave equation \eqref{QW} with null condition may not satisfy the strong null condition.
\end{remark}

In \cite{Lei16}, it was observed that writing \eqref{VisElas1}-\eqref{constr1} in Lagrangian
coordinates
\begin{equation}\label{ElastodynamicsL}
\begin{cases}
D_t^2x - \Delta_y x + F^{-T}\nabla_y p = 0,\\[-4mm]\\
\det(\nabla_y x) = 1,
\end{cases}
\end{equation}
and after applying a curl free Riesz operator, one may discover that \eqref{ElastodynamicsL} satisfies the ``strong null condition". Here $D_t,\ \nabla_y$
are derivatives with respect to Lagrangian coordinates.

Now we give a couple of more examples of physical systems for which the strong null condition are valid.

For the following two the dimensional fully nonlinear wave equations which are considered in \cite{CLM}:
\begin{equation}\label{fullynonwave}
(\partial_t^2-\Delta)u=N_{\alpha\beta\mu\nu}\partial_{\alpha} \partial_{\beta} u \partial_{\mu}\partial_{\nu} u,
\end{equation}
where $N_{\alpha\beta\mu\nu}$ satisfies the condition:
\begin{equation*}
N_{\alpha\beta\mu\nu} X_{\alpha}X_{\beta}X_{\mu}X_{\nu}=0
\end{equation*}
for all $X\in\Sigma$, where
$\Sigma=\{X\in \mathbb{R}^{+} \times\mathbb{R}^2: X_0^2=X_1^2+X_2^2  \}$. The equations \eqref{fullynonwave} satisfy the strong null condition. Moreover, it was shown in \cite{CLM} that a class of quasilinear wave equations where the null condition is satisfied can be transformed into \eqref{fullynonwave}.

For ideal magnetohydrodynamic systems:
\begin{equation} \label{MHD_0}
\begin{cases}
\partial_t v+v\cdot\nabla v+\nabla p=b\cdot\nabla b,\\[-4mm]\\
\partial_t b+v\cdot\nabla b=b\cdot\nabla v,\\[-4mm]\\
\nabla\cdot v=0\quad \nabla\cdot b=0.
\end{cases}
\end{equation}
Considering the problem in a nonzero magnetic background $e$, which is set to be $(1,0,...,0)\in\mathbb{R}^n$ without loss of generality. Introducing the following good unknowns:
\begin{equation}
\Lambda^{\pm}=v\pm (b-e).\nonumber
\end{equation}
Then \eqref{MHD_0} can be rewritten  as
\begin{equation*}
\begin{cases}
\partial_t \Lambda^{+}- e\cdot \nabla \Lambda^{+}
+\Lambda^{-}\cdot\nabla\Lambda^{+} + \nabla p = 0,\\[-4mm]\\
\partial_t \Lambda^{-} + e\cdot \nabla \Lambda^{-}
+\Lambda^{+}\cdot\nabla\Lambda^{-} + \nabla p =0,\\[-4mm]\\
\nabla\cdot \Lambda^{+} = 0,\quad \nabla\cdot \Lambda^{-}=0.
\end{cases}
\end{equation*}
It's obvious now the strong null condition is satisfied \cite{CL}.

Inspired by the above examples, we believe there is a large body of physical systems where the strong null condition is satisfied.

Coming back to the system \eqref{VisElas1}-\eqref{constr1}, following Lei-Sideris-Zhou
\cite{LSZ13}, we call $v + G\omega$ and $G\omega^\perp$ good unknowns. They are in the
similar spirit as the concept of good unknowns $g$ of Alinhac \cite{Alinhac01a}. One
writes the nonlinear terms in the momentum equation as
\begin{eqnarray}\nonumber
v\cdot\nabla v - \nabla\cdot(GG^T) &=
& (v + G\omega)\cdot\nabla v - (G\omega)_ j( \nabla_j v + \nabla_jG\omega)\\ [-4mm]\nonumber\\\nonumber
&& -(G\omega^\perp)_j \nabla_j G\omega^\perp \\ [-4mm]\nonumber\\\nonumber
&=&Q_1(g,\nabla v)+
Q_2(G\omega,g(\nabla u))
+Q_2(g, g(\nabla u))\nonumber.
\end{eqnarray}
We note that the system now is of first-order (if we ignoring the viscosity term). It does
explain why there is one spatial derivative less on the unknowns in the nonlinear
term. Obviously, $Q_1$ (which is a transport term) must present and thus system
\eqref{VisElas1}-\eqref{constr1} doesn't explicitly exhibit the strong null structure. One
can observe a similar fact for the $G$-equation in \eqref{VisElas1}-\eqref{constr1}.

We reformulate the system in order to show the strong null structure explicitly. Since $v$ and $G^\top$ are divergence free, there exist potential functions $V$ and
$H=(H_1,H_2)$ such that
$$v=\nabla^\perp V, \quad G^\top=\nabla^\perp H.$$
Then one has
\begin{lem}
For classical solutions,
the system \eqref{VisElas1}  is equivalent to \eqref{VisElas3}:
\begin{equation*}
\begin{cases}
\partial_tV-\mu\Delta V-\nabla\cdot H=\nabla^\perp\cdot
\nabla\cdot \Delta^{-1}\big(-\nabla^\perp V\otimes\nabla^\perp V+\nabla^\perp H\otimes\nabla^\perp H \big), \\[-4mm]\\
\partial_t H -\nabla V=\nabla^\perp H\nabla V, \\
\end{cases}
\end{equation*}
and \eqref{constr1} is  equivalent to \eqref{constr}:
\begin{equation*}
\nabla^\perp\cdot H=\nabla^\perp H_2 \cdot \nabla H_1.
\end{equation*}
Here $\nabla^\perp\cdot\nabla\cdot\Delta^{-1}(\nabla^\perp V\otimes\nabla^\perp V)$ and $\nabla^\perp\cdot\nabla\cdot\Delta^{-1}(\nabla^\perp H\otimes\nabla^\perp H)$ are given by:
\begin{equation*}
\begin{cases}
\nabla^\perp\cdot\nabla\cdot\Delta^{-1}(\nabla^\perp V\otimes\nabla^\perp V)
=\nabla^\perp_i\nabla_j\Delta^{-1}(\nabla_i^\perp V \nabla_j^\perp V), \\[-4mm]\\
\nabla^\perp\cdot\nabla\cdot\Delta^{-1}(\nabla^\perp H\otimes\nabla^\perp H)
=\nabla^\perp_i\nabla_j\Delta^{-1}(\nabla_i^\perp H\cdot \nabla_j^\perp H).
\end{cases}
\end{equation*}
\end{lem}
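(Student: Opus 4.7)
\medskip\noindent\textbf{Proof strategy.} Since the statement is an equivalence, the plan is to show that once one introduces the scalar/vector stream potentials $V$ and $H$ for the two divergence-free unknowns, each of the four lines in \eqref{VisElas1}--\eqref{constr1} translates directly into the corresponding line in \eqref{VisElas3}--\eqref{constr}. On $\mathbb{R}^2$, the conditions $\nabla\cdot v=0$ and $\nabla\cdot G^\top=0$ produce, uniquely up to additive constants under standard decay, scalar potentials $V$ and $H=(H_1,H_2)$ with $v=\nabla^\perp V$ and $G_{ij}=\nabla_i^\perp H_j$; in these coordinates both incompressibility conditions become automatic from $\partial_i\nabla_i^\perp\equiv 0$.

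To obtain the $V$-equation, I would apply the scalar operator $\nabla^\perp\cdot$ to the momentum equation. This annihilates $\nabla p$ and yields $\nabla^\perp\cdot v=\Delta V$, $\nabla^\perp\cdot\Delta v=\Delta^2 V$, and
\[
\nabla^\perp\cdot(\nabla\cdot G)=\nabla_i^\perp\nabla_j(\nabla_i^\perp H_j)=\Delta(\nabla\cdot H),
\]
so that after applying $\Delta^{-1}$ the linear part takes exactly the form $\partial_t V-\mu\Delta V-\nabla\cdot H$. For the nonlinearities, $\nabla\cdot v=0$ gives $\nabla^\perp\cdot(v\cdot\nabla v)=\nabla_i^\perp\nabla_j(v_iv_j)$, and analogously $\nabla^\perp\cdot\nabla\cdot(GG^\top)=\nabla_i^\perp\nabla_j(\nabla_i^\perp H_k\,\nabla_j^\perp H_k)$; inverting $\Delta$ and substituting $v=\nabla^\perp V$ produces precisely the fully nonlinear right-hand side displayed in \eqref{VisElas3}.

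For the $H$-equation, I would substitute $G_{ij}=\nabla_i^\perp H_j$ into $\partial_tG+v\cdot\nabla G=\nabla v\,G$. The key algebraic observation is the scalar identity
\[
v\cdot\nabla\Phi+\nabla V\cdot\nabla^\perp\Phi\equiv 0\qquad\text{for every smooth }\Phi,
\]
which follows from $\nabla V=-v^\perp$ by direct computation. Applying it with $\Phi=\nabla_i^\perp H_j$ converts the transport piece into $\nabla_k V\,\nabla_i^\perp\nabla_k^\perp H_j$; since $\nabla_i^\perp$ and $\nabla_k$ commute, one may pull $\nabla_i^\perp$ outside via the product rule, and the leftover cross terms cancel exactly against the term $\nabla_k v_i\,\nabla_k^\perp H_j$ coming from $\nabla v\,G$. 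What remains is
\[
\nabla_i^\perp\bigl[\partial_tH_j-\nabla_jV-\nabla_k^\perp H_j\,\nabla_k V\bigr]=0,
\]
so the bracket is a spatial constant, which by decay vanishes identically and gives the second line of \eqref{VisElas3}. The compatibility condition is treated by the same scheme: the substitution $G_{ij}=\nabla_i^\perp H_j$ yields $(\nabla^\perp\cdot G)_i=\nabla_i^\perp(\nabla^\perp\cdot H)$ on the left, while the determinant-type skew-symmetry $\nabla_l^\perp A\,\nabla_l B=-\nabla_l^\perp B\,\nabla_l A$ rewrites the right-hand side of \eqref{constr1} as $\nabla_i^\perp(\nabla^\perp H_2\cdot\nabla H_1)$, and \eqref{constr} follows modulo an additive constant again killed by decay.

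The only real obstacle is verifying the telescoping in the $H$-equation derivation: one must confirm that the quadratic commutator terms produced by moving $\nabla_i^\perp$ across the transport operator combine into a single $\nabla_i^\perp$-exact expression. The identity $v\cdot\nabla\Phi+\nabla V\cdot\nabla^\perp\Phi=0$, together with $[\nabla_i^\perp,\nabla_k]=0$, is exactly what supplies this cancellation; once it is in hand, both the $H$-equation and the constraint drop out, and the $V$-equation reduces to a standard pressure-elimination computation followed by an application of $\Delta^{-1}$.
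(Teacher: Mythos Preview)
Your proposal is correct and follows essentially the same route as the paper: apply $\nabla^\perp\cdot$ then $\Delta^{-1}$ to the momentum equation to obtain the $V$-equation, and for both the $H$-equation and the constraint use the two-dimensional skew identity $\nabla_l^\perp A\,\nabla_l B+\nabla_l^\perp B\,\nabla_l A=0$ (your $v\cdot\nabla\Phi+\nabla V\cdot\nabla^\perp\Phi=0$ is the paper's $-\nabla_l^\perp V\,\nabla_l H=\nabla_l V\,\nabla_l^\perp H$) to rewrite everything as $\nabla_i^\perp$ of a scalar, then strip off $\nabla_i^\perp$ using decay. The cancellation you flag between the product-rule cross term and $\nabla_k v_i\,\nabla_k^\perp H_j$ is exactly the computation the paper carries out.
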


Before proving the above lemma, let us check first that the above system satisfies the so-called  "strong null condition".
The good quantities here are $V + H\cdot\omega$ and $H\cdot\omega^\perp$. We can calculate that
\begin{eqnarray*}
&&\nabla^\perp_i V\nabla^\perp_j V - \nabla^\perp_i H\cdot\nabla^\perp_j H \\
&&=
(\nabla^\perp_i V+\nabla^\perp_i H\ \cdot\omega)\nabla^\perp_j V
-\nabla^\perp_i H\ \cdot\omega (\nabla^\perp_j V+\nabla^\perp_j H\ \cdot\omega) \\
&&\quad-\nabla^\perp_i H\ \cdot\omega^\perp
\nabla^\perp_j H\ \cdot\omega^\perp .
\end{eqnarray*}
The strong null condition has clearly showed up on the right hand side of the above
equation since all good quantities have an extra spatial derivative. The presence of the
extra zero-order nonlocal Riesz type operator
$ \nabla^\perp\cdot\nabla\cdot\Delta^{-1} $ in \eqref{VisElas3} is an extra issue that we
have to deal with later.

\begin{remark}
At first glance, the resulting system  \eqref{VisElas3} seems to be more complicated than
the original one \eqref{VisElas1}. The nonlinearities of  \eqref{VisElas3} have one more
derivative than that of \eqref{VisElas1}), which makes \eqref{VisElas3} a ``fully nonlinear
system" (in the inviscid case), see also a related formulation in
\cite{WangXueCheng2014}.
But the key point is that, together with the use of the modified generalized
energy $\mathcal{E}_\kappa$ , we can yet apply the ghost weight method along with the
strong null condition in this formulation. Moreover, we can avoid the derivative loss in
deriving the estimates $X_{\kappa-2}\lesssim E_{\kappa-1}$ and
$Y_{\kappa-2}\lesssim E_{\kappa-1}$.
\end{remark}

\begin{proof}
We begin by rewriting the first equation of \eqref{VisElas1} as
\begin{align*}
\partial_tv -\mu\Delta v-\nabla\cdot G= -\nabla p-\nabla\cdot(v\otimes v) + \nabla\cdot(GG^\top).
\end{align*}
Using  $(V,H)$  instead of  $(v,G)$ and applying $\nabla^\perp\cdot$ to the above equation, one has
\begin{equation*}
\Delta(\partial_tV-\mu\Delta V-\nabla\cdot H)=\nabla^\perp\cdot
\nabla\cdot\big(-\nabla^\perp V\otimes\nabla^\perp V
+\nabla^\perp H\otimes \nabla^\perp H \big).
\end{equation*}
Applying $ \Delta^{-1}$  to the above  the equation  yields the first equation of \eqref{VisElas3}.

For each component of \eqref{VisElas1}$_{2}$, the same substitution gives
\begin{align*}
\partial_t \nabla_i^\perp H_j-\nabla_j\nabla_i^\perp V
=-\nabla_l^\perp V\nabla_l \nabla_i^\perp H_j
 +\nabla_l\nabla_i^\perp V  \nabla_l^\perp  H_j.
\end{align*}
Note that
$$-\nabla^\perp_l V \nabla_l H=\nabla_l V \nabla_l^\perp H,$$
hence,
\begin{align*}
\nabla_i^\perp(\partial_t  H_j-\nabla_j V)
&=-\nabla_l^\perp V\nabla_l \nabla_i^\perp H_j
 +\nabla_l\nabla_i^\perp V  \nabla_l^\perp  H_j \\
&=\nabla_l V\nabla_l^\perp \nabla_i^\perp H_j
 +\nabla_l\nabla_i^\perp V  \nabla_l^\perp  H_j \\
&=\nabla_i^\perp  (\nabla_l V\nabla_l^\perp H_j),
\end{align*}
which infers
\begin{align*}
\partial_t  H_j-\nabla_j V
=\nabla_l V\nabla_l^\perp H_j.
\end{align*}
Thus the second equation of \eqref{VisElas3} is obtained.

For \eqref{constr1}, the same substitution gives
\begin{align*}
\nabla_i^\perp(\nabla^\perp\cdot H )
=\nabla_l^\perp H_2  \nabla_l \nabla_i^\perp H_1
-\nabla_l^\perp H_1  \nabla_l \nabla_i^\perp H_2.
\end{align*}
By the identity
$$-\nabla^\perp_l H_1 \nabla_l H_2=\nabla_l H_1 \nabla_l^\perp H_2, $$
we deduce that
\begin{align*}
\nabla_i^\perp(\nabla^\perp\cdot H )
&=\nabla_l^\perp H_2  \nabla_l \nabla_i^\perp H_1
-\nabla_l^\perp H_1  \nabla_l \nabla_i^\perp H_2 \\
&=\nabla_l^\perp H_2  \nabla_l \nabla_i^\perp H_1
+\nabla_l H_1  \nabla_l^\perp \nabla_i^\perp H_2 \\
&=\nabla_i^\perp(\nabla_l^\perp H_2  \nabla_l  H_1),
\end{align*}
which infers \eqref{constr}.

In all the above argument,  the calculation can be reversed  if the solution has enough regularity.
Hence \eqref{VisElas1} and \eqref{constr1} are equivalent to \eqref{VisElas3} and \eqref{constr} for classical solutions.
\end{proof}

\subsection{Commutation Properties}
Now let us take a look at the various vector fields which play a central role in the proofs.
Since the application of  the vector field theory is now classical, we will give a
sketch of rough ideas, and indicate the differences with the classical theory.
For related discussions, we refer to \cite{Kessenich,Lei16,Sideris00}.

The scaling operator is defined by
\begin{equation*}
S = t\partial_t +r\partial_r.
\end{equation*}
Here, due to the scaling of $V$ and $H$,  we will use the modified scaling operator $\widetilde{S}$
which is defined as:
\begin{equation*}
\widetilde{S} = S-1.
\end{equation*}
Applying  the scaling operator $ S$ to \eqref{VisElas3} and \eqref{constr},  we get
\begin{equation}\label{Eq-Scale1}
\begin{cases}
\partial_t\widetilde{S}V-\mu\Delta (\widetilde{S}-1)V-\nabla \cdot \widetilde{S}H \\
\quad=\nabla^\perp\cdot
\nabla\cdot \Delta^{-1}\big(-\nabla^\perp\widetilde{S}V\otimes\nabla^\perp V+\nabla^\perp \widetilde{S} H\otimes \nabla^\perp H \big)\\
\qquad +\nabla^\perp\cdot
\nabla\cdot \Delta^{-1}\big(-\nabla^\perp V\otimes\nabla^\perp\widetilde{S}V+\nabla^\perp H\otimes \nabla^\perp \widetilde{S} H \big), \\
\partial_t \widetilde{S}H -\nabla \widetilde{S}V=
\nabla^\perp \widetilde{S}H\nabla V
+ \nabla^\perp H\nabla \widetilde{S}V,
\end{cases}
\end{equation}
and
\begin{equation} \label{Eq-Scale2}
\nabla^\perp\cdot \widetilde{S} H
=\nabla^\perp \widetilde{S} H_2 \cdot \nabla H_1
+\nabla^\perp H_2 \cdot \nabla \widetilde{S} H_1.
\end{equation}
We see from the above expressions that, when $\mu=0$, the modified scaling operator
commutates well with the inviscid systems, but when $\mu>0$, there is an
extra term $-1$ coming from the commutation between the viscosity term and the scaling
operator. This extra commutator term is troublesome and requires some extra care.

In the 2D case, the rotation operator is defined by:
\begin{equation*}
\Omega =x^\perp\cdot\nabla=\partial_\theta.
\end{equation*}
Applying the rotation operator to \eqref{VisElas3} and \eqref{constr}, we get
\begin{equation}
\begin{cases}\label{Eq-Rotation1}
\partial_t\widetilde{\Omega}V
-\mu\Delta\widetilde{\Omega}V-\nabla \cdot \widetilde{\Omega}H
 \\
\quad=\nabla^\perp\cdot
\nabla\cdot \Delta^{-1}\big(-\nabla^\perp\widetilde{\Omega}V\otimes\nabla^\perp V+\nabla^\perp \widetilde{\Omega} H\otimes \nabla^\perp H \big)\\
\qquad +\nabla^\perp\cdot
\nabla\cdot \Delta^{-1}\big(-\nabla^\perp V\otimes\nabla^\perp\widetilde{\Omega}V+\nabla^\perp H\otimes \nabla^\perp \widetilde{\Omega} H \big), \\
\partial_t \widetilde{\Omega}H -\nabla \widetilde{\Omega}V
=\nabla^\perp \widetilde{\Omega}H\nabla V
+ \nabla^\perp H\nabla \widetilde{\Omega}V,
\end{cases}
\end{equation}
and
\begin{equation}\label{Eq-Rotation2}
\nabla^\perp\cdot \widetilde{\Omega} H
=\nabla^\perp \widetilde{\Omega} H_2 \cdot \nabla H_1
+\nabla^\perp H_2 \cdot \nabla \widetilde{\Omega} H_1,
\end{equation}
where
\begin{equation}
\begin{cases}
\widetilde\Omega V =\Omega V , \\
\widetilde\Omega H= \Omega H-H^\perp. \nonumber
\end{cases}
\end{equation}
Hence, the rotation operator commutates well with the system.
In view of this, we will  separate the scaling operator from regular derivatives and rotation operator:
Let $\Gamma$ be any of the following differential operators
\begin{equation*}
\Gamma \in \{\partial_t,\partial_1,\partial_2,
\widetilde{\Omega}\}.
\end{equation*}
Following the above arguments,
repeatedly using \eqref{Eq-Scale1}, \eqref{Eq-Scale2} and \eqref{Eq-Rotation1}, \eqref{Eq-Rotation2},
we have
\begin{equation}\label{VisElas-Gamma0}
\begin{cases}
\partial_t\widetilde{S}^\alpha\Gamma^aV
-\mu\Delta\sum\limits_{l=0}^\alpha C_\alpha^l (-1)^{\alpha-l} \widetilde{S}^l\Gamma^aV
-\nabla\cdot\widetilde{S}^\alpha\Gamma^a H=f^1_{\alpha a} ,\\
\partial_t\widetilde{S}^\alpha\Gamma^a H -\nabla\widetilde{S}^\alpha\Gamma^aV=f^2_{\alpha a},
\end{cases}
\end{equation}
and
\begin{equation}\label{constr-Gamma0}
\nabla^\perp\cdot\widetilde{S}^\alpha\Gamma^a H=f^3_{\alpha a},
\end{equation}
where
\begin{equation}\label{VisElas-Gamma-f123}
\begin{cases}
f^1_{\alpha a}
=\sum\limits_{\tiny\begin{matrix}b+c=a\\ \beta+\gamma=\alpha\end{matrix}}
C_{\alpha}^\beta C_a^b
\nabla^\perp\cdot\nabla\cdot \Delta^{-1}
\big(-\nabla^\perp \widetilde{S}^\beta\Gamma^b V\otimes\nabla^\perp\widetilde{S}^\gamma\Gamma^c V\\
\quad\quad\quad\quad\quad\quad +\ \nabla^\perp \widetilde{S}^\beta\Gamma^b H\otimes
\nabla^\perp \widetilde{S}^\gamma\Gamma^c H \big),\\[-4mm]\\
f^2_{\alpha a}=\sum\limits_{\tiny\begin{matrix}b+c=a\\ \beta+\gamma=\alpha\end{matrix}}
C_{\alpha}^\beta C_a^b
(\nabla^\perp \widetilde{S}^\beta\Gamma^b H\nabla \widetilde{S}^\gamma\Gamma^c V),\\
f^3_{\alpha a}=\sum\limits_{\tiny\begin{matrix}b+c=a\\ \beta+\gamma=\alpha\end{matrix}}
C_{\alpha}^\beta C_a^b
(\nabla^\perp \widetilde{S}^\beta\Gamma^b H_2 \cdot \nabla\widetilde{S}^\gamma\Gamma^c H_1).
\end{cases}
\end{equation}
Here $\alpha \in \mathbb{N}$ and  $\Gamma^a$ stands for  $\Gamma^a=\Gamma^{a_1}...\Gamma^{a_4}$, where
$a$ is multi-index $a=(a_1,a_2,a_3,a_4)\in \mathbb{N}^4$. We indicate that the generalized vector field $Z$ used in section 1.2 refers to
\begin{equation*}
Z \in \{\partial_t,\partial_1,\partial_2,
\widetilde{\Omega},\widetilde{S}\}.
\end{equation*}
 We also use the abbreviation $\Gamma^k V=\{\Gamma^a V: |a|\leq k \}$ and $\Gamma^k H=\{\Gamma^a H: |a|\leq k \}$.
The binomial coefficient $C_a^b$ is given by
$$C_a^b = \frac{a!}{b!(a - b)!}.$$
We remark that the above commutation relation \eqref{VisElas-Gamma0}-\eqref{VisElas-Gamma-f123} is essential in all of the subsequent argument.
Schematically, we write the following commutation relationship
\begin{equation*}
[\Gamma,\Gamma]=\partial,\quad [\Gamma, S]=\partial.
\end{equation*}
This fact is frequently implicitly used through the whole argument.

In order to simplify the presentation, we abbreviate
$\widetilde{S}^\alpha\Gamma^a V$ as $V^{(\alpha,a)}$  and
abbreviate $\widetilde{S}^\alpha\Gamma^a H$ as $H^{(\alpha,a)}$.
Also, we denote $V^{(\alpha,|a|)}=\{ V^{(\alpha,b)}: |b|\leq |a| \}$,
$H^{(\alpha,|a|)}=\{ H^{(\alpha,b)}: |b|\leq |a| \}$.
Thus \eqref{VisElas-Gamma0} and \eqref{constr-Gamma0} can be written as
\begin{equation}\label{VisElas-Gamma}
\begin{cases}
\partial_tV^{(\alpha, a)}
-\mu\Delta\sum\limits_{l=0}^\alpha C_\alpha^l (-1)^{\alpha-l}V^{(l, a)}
-\nabla\cdot H^{(\alpha, a)}=f^1_{\alpha a} ,\\
\partial_t H^{(\alpha, a)} -\nabla V^{(\alpha, a)}=f^2_{\alpha a}
\end{cases}
\end{equation}
and
\begin{equation}\label{constr-Gamma}
\nabla^\perp\cdot H^{(\alpha, a)}=f^3_{\alpha a}.
\end{equation}

We will also use the notation $  f^{\alpha a}_{ij}  $  to denote
\begin{equation}\label{fij}
f^{\alpha a}_{ij}=\sum\limits_{\tiny\begin{matrix}b+c=a\\ \beta+\gamma=\alpha\end{matrix}}C_{\alpha}^\beta C_a^b
(\partial_iV^{(\beta,b)} \partial_jV^{(\gamma,c)}
-\partial_iH^{(\beta,b)}\cdot \partial_jH^{(\gamma,c)}),
\end{equation}
where $1\leq i,j \leq 2$.  Hence, $ f_{\alpha a}^{1}   =    R^\perp_i  R_j    f_{\alpha a}^{ij} $
where  $ R^\perp_i  R_j  =   \nabla^\perp_i   \nabla_j    \Delta^{-1}.$

\subsection{Some Notations}
Now we explain some important concepts and notations used through the paper.
The spatial derivatives can be decomposed into radial and angular components:
\begin{equation}
\label{der-decomp}
\nabla=\omega\partial_r+\frac{\omega^\perp}{r}\partial_\theta,
\end{equation}
where $\partial_r=\omega\cdot\nabla, \partial_\theta=x^\perp \cdot \nabla$.
This fact plays an important role in the following argument.

We will use Klainerman's generalized energy which is defined, for $\kappa \geq 1$,  by
\begin{equation*}\label{Energy}
E_\kappa(t) = \sum_{|\alpha|+|a| \leq \kappa}
\|U^{(\alpha,a)}(t,\cdot)\|_{L^2}^2,
\end{equation*}
where $U=(V,H)$. Moreover, we introduce the modified generalized energy:
\begin{equation*}\label{EnergyA}
\mathcal{E}_\kappa(t) = \sum_{|\alpha|+|a|+1 \leq \kappa}
\|\nabla U^{(\alpha,a)}(t,\cdot)\|_{L^2}^2.
\end{equation*}
Here the word ``modified generalized energy"  is used to
insist on the fact that one of the derivatives has to be a regular derivative.
 The use  of the modified energy
$\mathcal{E}_{\kappa}$  at the highest derivative level is imposed by the ghost weight method
and will  lead to some difficulties.

We also use the weighted energy norm of Klainerman-Sideris \cite{KlainermanSiderisS96}:
\begin{equation*}\label{WEnergy}
X_\kappa(t) = \sum_{|\alpha|+|a|+1 \leq \kappa}
\| \langle r-t\rangle \nabla U^{(\alpha,a)} \|^2_{L^2},
\end{equation*}
in which we denote $\langle \sigma\rangle = \sqrt{1 + \sigma^2}$.

In addition, we introduce a new weighted energy for good quantities $V + H\cdot\omega$ and $H\cdot\omega^\perp$:
\begin{equation*}
Y_\kappa(t) = \sum_{|\alpha|+|a|+1 \leq \kappa}\big(\| r (\partial_rV^{(\alpha,a)}
+\partial_rH^{(\alpha,a)}\cdot\omega) \|^2_{L^2}
+\| r \partial_rH^{(\alpha,a)}\cdot\omega^\perp \|^2_{L^2}\big).
\end{equation*}
The weighted energy $Y_\kappa$ is used to describe the good decay properties of the good unknowns
$V+ H\cdot\omega$ and $H\cdot\omega^\perp$ near the light cone.
We emphasize that we need  to treat the derivative loss in  the sequel when estimating $X_\kappa$ and $Y_\kappa$.

To describe the space of  initial data, we introduce (see \cite{Sideris00})
\begin{equation}\nonumber
\Lambda = \{\nabla, \widetilde\Omega,r\partial_r-1\},
\end{equation}
and
\[
H^\kappa_\Lambda =\{(f,g):\sum_{|a|\le \kappa}\|\Lambda^a f\|_{L^2}+\|\Lambda^a g\|_{L^2}<\infty\},
\]
with the norm
\[
\|(f,g)\|_{H^\kappa_\Lambda} =\sum_{|a|\le \kappa}\big(\|\Lambda^a f\|_{L^2}+\|\Lambda^a g\|_{L^2}\big),
\]
for scalar, vector or matrix function $f$ and $g$.
Then as in \cite{Sideris00}, we define
\[
{H}^\kappa_\Gamma(T)=\{(f,g):[0,T)\to \mathbb{R}\times\mathbb{R}^2, (f,g)\in\cap_{j=0}^\kappa C^j([0,T);H^{\kappa-j}_\Lambda)\}.
\]
Solutions will be constructed in the space ${H}^\kappa_\Gamma(T)$.

Throughout the whole paper, we will use $A\lesssim B$ to denote $A\leq C B$ for some positive absolute constant $C$,
whose meaning may change from line to line. We remark that, without specification, the constant only depends on $\kappa$, but never depends on $\mu$ or $t$.

For the global existence result, we will establish the following \textit{a priori} estimate
\begin{align}\label{PriorEsti1}
&\mathcal{E}_{\kappa}(t) + E_{\kappa-1}(t)
+\sum_{|\alpha|+|a|\leq \kappa-1}
\mu \int_0^t\int_{\mathbb{R}^2} |\Delta V^{(\alpha,a)}(\tau)|^2
+ |\nabla V^{(\alpha,a)}(\tau)|^2 dxd\tau\nonumber\\
&\lesssim
\int_0^t \langle \tau \rangle ^{-1}(\mathcal{E}_\kappa(\tau)+E_{\kappa-1}(\tau)) E_{\kappa-3}^{\frac 12}(\tau)d\tau
+\mathcal{E}_{\kappa}(0)+E_{\kappa-1}(0) ,
\end{align}
and
\begin{align}\label{PriorEsti2}
&E_{\kappa-3}(t)+ \sum_{|\alpha| +|a| \leq \kappa-3}
\mu\int_0^t\int_{\mathbb{R}^2} |\nabla V^{(\alpha,a)}(\tau)|^2 dxd\tau  \nonumber\\
&\lesssim E_{\kappa-3}(0)+\int_0^t \langle \tau\rangle^{-\frac 32}
E_{\kappa-3}(\tau) E_{\kappa-1}^{\frac 12}(\tau) d\tau,
\end{align}
for $\kappa\geq 12$. Once the above estimates are obtained, the main result holds by a  standard continuity method.
For the details, one can consult the differential version in \cite{Lei16}.

So from now on,  our main goal is to prove the two \textit{a priori}
estimates \eqref{PriorEsti1} and \eqref{PriorEsti2}.
In Theorem \ref{thm}, by taking appropriately large $C_0$ and small $\gamma$, we can
assume that $E_{k-3}\ll 1$, which is always assumed in the following argument.

\subsection{Energy Estimate Scenario}
To see the underlying ideas more clearly in these long computations, we sketch the energy
estimates in various scenarios as follows.

\textbf{The modified energy estimate}:
\begin{align*}
&\mathcal{E}_\kappa+\underbrace{\mathcal{D}_{\kappa+1}}_{
\textrm{modified dissipative energy}}+\underbrace{G_{\kappa}}_{
\textrm{ghost weight energy}}
+\underbrace{Linear Commutator_1}_{
\textrm{due to viscosity and scaling operator}}
\nonumber\\[-4mm] \nonumber\\
&
+\underbrace{LinearCommutator_2}_{
\textrm{due to viscosity and ghost weight}}
\lesssim C+{Nonlinear-terms}_1.
\end{align*}
$LinearCommutator_1$: it can be absorbed by $\mathcal{D}_{\kappa+1}$.\\
$LinearCommutator_2$: is absorbed by $\mathcal{D}_{\kappa+1}$ and $D_{\kappa}$ (note that
$D_{\kappa}$ will in \textbf{the standard higher order energy estimate} $E_{\kappa-1}$).\\
${Nonlinear-terms}_1$: derivative loss problems are present due the highly fully nonlinear
effect, nonlocal effect and the use of ghost weights. After a long, delicate integration by
parts procedure, one can save one derivative.  It is important that the null condition is
satisfied. Thus one can continue to employ $G_k$ to improve the decay rate to the critical
rate. Here one needs also to take care of the derivative loss in dealing with
$X_{\kappa-1}+Y_{\kappa-1}\lesssim E_{\kappa}$.

\textbf{The standard higher order energy estimate}:
\begin{align*}
&E_{\kappa-1}+\underbrace{D_{\kappa}}_{
\textrm{dissipative energy}}
+\underbrace{LinearCommutator_3}_{
\textrm{due to viscosity and scaling operator}}
\nonumber\\[-4mm] \nonumber\\
&\lesssim C+ {Nonlinear-terms}_2.
\end{align*}
$LinearCommutator_3$: absorbed by $D_{\kappa}$.
\\
${Nonlinear-terms}_2$:  $G_\kappa$ is used to improve the decay (note $G_\kappa$ has been
used in the
\textbf{modified energy estimate} $\mathcal{E}_{\kappa}$). One also takes care of the
derivative loss in derivations of $X_{\kappa-1}+Y_{\kappa-1}\lesssim E_{\kappa}$.

\textbf{The lower order energy estimate}:
\begin{align*}
&E_{\kappa-3}+\underbrace{D_{\kappa-2}}_{
\textrm{dissipative energy}}
+\underbrace{LinearCommutator_4}_{
\textrm{due to viscosity and scaling operator}}
\nonumber\\[-4mm] \nonumber\\
&\lesssim \epsilon^2+{Nonlinear-terms}_3.
\end{align*}
$LinearCommutator_4$: absorbed by $D_{\kappa-2}$.\\
${Nonlinear-terms}_3$: by a strong null condition to improve the decays.\\

\section{Estimates for the Special Quantities}
In this section, we are going to estimate the weighted
$L^2$ energy $X_{\kappa}$ and $Y_{\kappa}$. The weighted energy $X_\kappa$ was first introduced by Klainerman and Sideris \cite{KlainermanSiderisS96} for proving almost global solutions of three-dimensional quasilinear wave equations and later on used in \cite{LSZ13} for proving almost global  existence for the  two-dimensional incompressible elastodynamic system.
The energy $Y_\kappa$ is new and used here to estimate the good unknowns. Due to the fact
that $r$ is not an $\mathcal{A}_2$ weight in two dimension, the a modified one is introduced
in \cite{Lei16} to get the global solution of two-dimensional incompressible elastodynamics.
Here by transforming the original quasilinear system \eqref{VisElas1} into a fully nonlinear one
\eqref{VisElas3}, we can simply use the earlier ones introduced by Klainerman-Sideris for
$X_\kappa$.  This advantage is based on the inherent structure of the system, which enables
us to simplify the proofs a lot.

\subsection{Sobolev-type Inequalities}
The following weighted Sobolev-type inequalities will be used to prove the decay of solutions in $L^\infty$ norm.
A much stronger version of \eqref{K-S-3} appeared in \cite{Lei16}. Here since we are able to transform the original system into a ``fully nonlinear" one, the form of \eqref{K-S-3} is enough for us here.
\begin{lem}\label{31}
For all $f\in H^2(\mathbb{R}^2)$, there holds
\begin{align}
\label{K-S-1}
r|f(x)|^2&\lesssim \sum_{a=0,1} \Large[ \|\partial_r \Omega^a f\|_{L^2}^2+\|\Omega^a f\|_{L^2}^2 \Large],\\
\label{K-S-2}
r{\langle t-r\rangle}^2|f(x)|^2&\lesssim \sum_{a=0,1}\Large[\|\langle t-r\rangle \partial_r \Omega^a f\|_{L^2}^2
+\|\langle t-r\rangle\Omega^a f\|_{L^2}^2\Large],\\
\label{K-S-3}
\langle t\rangle \| f\|_{L^\infty(r\leq \langle t\rangle/2)}
&\lesssim \sum_{|a|\le2}\|\langle t-r\rangle\partial^a f\|_{L^2},
\end{align}
provided the right-hand side is finite.
\end{lem}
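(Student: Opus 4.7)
The plan is to reduce all three bounds to a one-dimensional fundamental theorem of calculus along the radial direction, coupled with Sobolev embedding on the unit circle $S^1$, and for \eqref{K-S-3} to a cutoff argument combined with the 2D embedding $H^2\hookrightarrow L^\infty$.

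For \eqref{K-S-1}, I first handle the angular variable: since $\Omega=\partial_\theta$, the 1D Sobolev embedding $H^1(S^1)\hookrightarrow L^\infty(S^1)$ yields, for each fixed $r>0$,
$$|f(r,\theta)|^2 \lesssim \int_0^{2\pi}\bigl(|f(r,\theta')|^2+|\Omega f(r,\theta')|^2\bigr)\,d\theta'.$$
For the radial part, I write for $g\in\{f,\Omega f\}$
$$|g(r,\theta')|^2 = -\int_r^\infty 2\,g\,\partial_s g\,ds,$$
which is justified by density since $f\in H^2(\bR^2)$. Multiplying by $r$, using $r\le s$ on the integration range, and applying Cauchy-Schwarz (in $s$ then in $\theta'$) produces $\|g\|_{L^2(\bR^2)}\|\partial_r g\|_{L^2(\bR^2)}$. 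Summing over $g\in\{f,\Omega f\}$ and using Young's inequality yields \eqref{K-S-1}.

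For \eqref{K-S-2}, the same two-step argument applies to $\tilde f:=\langle s-t\rangle f$, treating $t$ as a parameter. The key observation is
$$\partial_s\tilde f = \tfrac{s-t}{\langle s-t\rangle}\,f + \langle s-t\rangle\,\partial_s f,$$
so $|\partial_s\tilde f|\le |f|+\langle s-t\rangle|\partial_s f|$, which produces exactly the two terms on the right-hand side of \eqref{K-S-2}. Since $\Omega$ commutes with multiplication by $\langle s-t\rangle$, the angular Sobolev step is unchanged.

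For \eqref{K-S-3}, I introduce a radial cutoff $\chi(x)$ equal to $1$ on $r\le\langle t\rangle/2$, supported in $r\le 3\langle t\rangle/4$, with $|\partial^c\chi|\lesssim\langle t\rangle^{-|c|}$. The embedding $H^2(\bR^2)\hookrightarrow L^\infty(\bR^2)$ then gives
$$\|f\|_{L^\infty(r\le\langle t\rangle/2)} \le \|\chi f\|_{L^\infty} \lesssim \sum_{|a|\le 2}\|\partial^a(\chi f)\|_{L^2}.$$
Leibniz rule expands the right-hand side; each derivative falling on $\chi$ contributes a factor $\langle t\rangle^{-1}$, while on the support of $\chi$ we have $\langle t-r\rangle\gtrsim\langle t\rangle$, so this factor can be traded for $\langle t-r\rangle^{-1}$ inside the $L^2$ norm. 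After multiplication by $\langle t\rangle$ this yields \eqref{K-S-3}. The main obstacle, though rather modest, is the bookkeeping in \eqref{K-S-2}: one must verify that the bound $|(s-t)/\langle s-t\rangle|\le 1$ controls the zeroth-order commutator without producing any term beyond those already present on the right-hand side, and that Fubini-type use of Sobolev in $\theta$ and radial Cauchy-Schwarz are compatible with the mixed weight $\langle s-t\rangle$.
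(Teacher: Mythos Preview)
Your argument is correct and is precisely the standard proof of these weighted Sobolev inequalities. The paper itself does not give a proof; it simply cites \cite{LSZ13} (and \cite{SiderisThomas07} for the 3D analogue) and omits the details, and the argument in those references is exactly the one you outline: angular Sobolev embedding on $S^1$ together with the radial fundamental-theorem-of-calculus trick for \eqref{K-S-1}--\eqref{K-S-2}, and a cutoff plus $H^2(\bR^2)\hookrightarrow L^\infty$ for \eqref{K-S-3}. Your bookkeeping concern in \eqref{K-S-2} is harmless, since the unweighted term $\|\Omega^a f\|_{L^2}$ produced by $|(s-t)/\langle s-t\rangle|\le 1$ is dominated by $\|\langle t-r\rangle\Omega^a f\|_{L^2}$ because $\langle t-r\rangle\ge 1$.
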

The proof of this lemma can be found in \cite{LSZ13} (the three-dimensional version can be found in \cite{SiderisThomas07}),
we omit the details here.

\subsection{Estimate of the Good  Quantities}
In this section, we are going to explore the good properties of some special combinations of unknowns.
Both the linearities and the nonlinearities will be investigated.
The exploration of these special quantities is a prerequisite
 for the estimate of weighted $L^2$ energy $X_\kappa$ and $Y_\kappa$. On the other hand,
they are also crucial for the energy estimate which will be conducted
in the subsequent two sections.

In order to simplify the presentation, we first introduce some notations.
Suppose that $(V,H)\in H^\kappa_{\Gamma}$ solves \eqref{VisElas3} and \eqref{constr}.
Define
\begin{eqnarray*}
&&L_\kappa=\sum_{|\alpha|+|a|\leq k}|U^{(\alpha,a)}|,\\
&&N_{\kappa+1}= \sum_{|\alpha|+|a|\leq k}
( t|f^1_{\alpha a}|+t|f^2_{\alpha a}|+(t+r)|f_{\alpha a}^3|),\\
&&\mathcal{N}_{\kappa+2}= \sum_{|\alpha|+|a|\leq k}t|\nabla \cdot f^2_{\alpha a}|,
\end{eqnarray*}
where $L_\kappa$ represents some linear quantity,
$N_{\kappa+1}$ and $\mathcal{N}_{\kappa+2}$ represent some nonlinear quantities.

\begin{remark}
The term  $N_{\kappa+1}$ will be used when we
multiply  the systems \eqref{VisElas-Gamma} and \eqref{constr-Gamma} by some $t$ or $r$ factor.
  The term $\mathcal{N}_{\kappa+2}$  will appear  due to the presence  of viscosity   (see Lemma \ref{lemLinear}).
\end{remark}
\begin{remark}
One can also use a stronger  version of $N_{\kappa+1}$ by defining
\begin{equation*}
N_{\kappa+1}= \sum_{|\alpha|+|a|\leq k}
( t|f^1_{\alpha a}|+(t+r)|f^2_{\alpha a}|+(t+r)|f_{\alpha a}^3|).
\end{equation*}
However, one cannot include $r|f^1_{\alpha a}|$ in $N_{\kappa+1}$ since $r$ is not an  $\mathcal{A}_2$ weight for singular integral in two space dimensions.
\end{remark}

Now we are going to analyze the linear part of the system  and  establish several estimates. Before doing so, we need an elementary iteration lemma.
\begin{lem}(Iteration lemma) \label{lemItera}
Let $\{f_l\},\ \{g_l\},\ \{F_l\}$ be three nonnegative sequences,
where $0\leq l\leq \kappa$.
Suppose that
\begin{equation*}
f_0+g_0\lesssim F_0,
\end{equation*}
and for all $1\leq l\leq \kappa$,
\begin{equation*}
f_l+g_l-g_{l-1}\lesssim F_l.
\end{equation*}
Then there holds
\begin{equation*}
\sum_{0\leq m\leq l}(f_m+g_m)\lesssim \sum_{0\leq m\leq l}F_m,
\end{equation*}
for all $0\leq l\leq \kappa$.
\end{lem}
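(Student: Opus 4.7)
The plan is to prove this by straightforward induction on $l$, tracking the partial sum $S_l := \sum_{m=0}^{l}(f_m + g_m)$. The central observation is that although the recursive hypothesis involves the a priori uncontrolled term $g_{l-1}$ on the right-hand side, we have for free the trivial bound $g_{l-1} \leq S_{l-1}$ coming from the assumed nonnegativity of all entries. Substituting this into the hypothesis converts it into a closed recursion for $S_l$ alone, which is easy to iterate.

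More concretely, let $C$ be a constant such that $f_0 + g_0 \leq C F_0$ and $f_l + g_l \leq g_{l-1} + C F_l$ for $1 \leq l \leq \kappa$. Then for $l \geq 1$,
\[
S_l - S_{l-1} = f_l + g_l \leq g_{l-1} + C F_l \leq S_{l-1} + C F_l,
\]
so $S_l \leq 2 S_{l-1} + C F_l$. Iterating from the base $S_0 \leq C F_0$ gives
\[
S_l \leq 2^l S_0 + C \sum_{m=1}^l 2^{l-m} F_m \leq 2^{\kappa+1} C \sum_{m=0}^l F_m,
\]
where in the last step I use that $l \leq \kappa$. This is precisely the conclusion $\sum_{0 \leq m \leq l}(f_m + g_m) \lesssim \sum_{0 \leq m \leq l} F_m$, since the convention established just before the lemma permits implicit constants to depend on $\kappa$.

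There is no real obstacle in this argument: the statement is essentially a discrete Grönwall inequality, and the only nonobvious move is the substitution $g_{l-1} \leq S_{l-1}$ that closes the recursion. The resulting implicit constant grows like $2^{\kappa}$, which is harmless because $\kappa$ is a fixed regularity index throughout the paper. If one wished to track a sharper constant, one could instead argue by a separate induction that $g_l \leq (l+1) C \sum_{m=0}^l F_m$ and then bound $f_l$ via the hypothesis, but for the intended use in the subsequent energy estimates the $\kappa$-dependent version suffices.
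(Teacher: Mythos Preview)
Your proof is correct and follows essentially the same approach as the paper: both argue by induction on $l$ and absorb the $g_{l-1}$ term using nonnegativity, at the cost of doubling the implicit constant at each step (yielding a final constant of order $2^{\kappa}$, which is acceptable since constants may depend on $\kappa$). The only cosmetic difference is that the paper multiplies the induction hypothesis by $2$ and adds the level-$l$ assumption directly, whereas you rewrite the same manipulation as the closed recursion $S_l \leq 2S_{l-1} + CF_l$ and iterate it explicitly; the underlying arithmetic is identical.
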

\begin{remark}
This lemma plays a role in
dealing with the commutators between the viscosity term
and the scaling operator and will be frequently used through the whole paper.
\end{remark}
\begin{proof}
We prove the lemma by induction on $l$.
Obviously, the lemma is correct when $l=0$.
Let $1\leq l \leq \kappa$.
We assume the lemma is correct for $l-1$.
This means that
\begin{equation}\label{Ite1}
\sum_{0\leq m\leq l-1}(f_m+g_m)\leq C\sum_{0\leq m\leq l-1}F_m.
\end{equation}
On the other hand, we have
\begin{equation}\label{Ite2}
f_l+g_l-g_{l-1}\leq CF_l.
\end{equation}
Multiplying \eqref{Ite1} by 2, then adding \eqref{Ite2}, we get
\begin{equation*}
2\sum_{0\leq m\leq l-2}(f_m+g_m)+f_l+g_l+2f_{l-1}+g_{l-1}
\leq 2C\sum_{0\leq m\leq l-1}F_m+CF_l,
\end{equation*}
which is the required estimate for $l$.
Thus the lemma is proved.
\end{proof}

Now we are ready to state two lemmas for the special linear quantities. These two
lemmas are requisite for the estimate of the weighted $L^2$ norm $X_\kappa$ and $Y_\kappa$ .
\begin{lem}\label{lemLinear}
Suppose that $(V,H)\in H^{\kappa-1}_{\Gamma}$ solves \eqref{VisElas3} and \eqref{constr}.
Then for all $|\alpha|+|a|\leq \kappa-3$,
there holds
\begin{align*}
&\|r\partial_rV^{(\alpha,a)}+t\nabla\cdot H^{(\alpha,a)}\|_{L^2}^2
+\|\mu t\Delta V^{(\alpha,a)}\|_{L^2}^2
\nonumber\\[-4mm]\nonumber\\
&\leq \nu X_{|\alpha|+|a|+1}
+C \langle \mu\rangle^2\|L_{|\alpha|+|a|+1} \|^2_{L^2}
+C_{\nu} \mu^2\|L_{|\alpha|+|a|+2} \|^2_{L^2}
\nonumber\\[-4mm]\nonumber\\
&\quad+C \|N_{|\alpha|+|a|+1} \|^2_{L^2}
+C_{\nu} \mu^2\|\mathcal{N}_{|\alpha|+|a|+2} \|^2_{L^2},
\end{align*}
provided the right hand side is finite,
where $\nu$ can be any positive constant,
$C_\nu$ is a constant which depends only on $\alpha, a$ and $\nu$,
$C$ depends only on $\alpha$ and $a$.
\end{lem}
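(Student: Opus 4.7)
The plan is to derive a master identity via the modified scaling operator and then close the estimate using the iteration Lemma \ref{lemItera} to handle the viscous commutators. Since $\widetilde{S}V^{(\alpha,a)}=V^{(\alpha+1,a)}$ and $\widetilde{S}=t\partial_t+r\partial_r-1$, we have $r\partial_r V^{(\alpha,a)}+t\partial_t V^{(\alpha,a)}=V^{(\alpha+1,a)}+V^{(\alpha,a)}$. Multiplying the first equation of \eqref{VisElas-Gamma} by $t$ and eliminating $t\partial_t V^{(\alpha,a)}$ produces the fundamental identity
\begin{align*}
r\partial_r V^{(\alpha,a)}+t\nabla\cdot H^{(\alpha,a)}+\mu t\Delta V^{(\alpha,a)}
=V^{(\alpha+1,a)}+V^{(\alpha,a)}-\mu t\sum_{l=0}^{\alpha-1}C_\alpha^l(-1)^{\alpha-l}\Delta V^{(l,a)}-tf^1_{\alpha a}.
\end{align*}
Taking $L^2$ norms and squaring, the first two right-hand terms contribute to $\|L_{|\alpha|+|a|+1}\|^2$, the last to $\|N_{|\alpha|+|a|+1}\|^2$, and the middle viscous tail reintroduces $\|\mu t\Delta V^{(l,a)}\|^2$ at lower scaling indices $l<\alpha$.

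This recursive dependence is exactly the structure that Lemma \ref{lemItera} absorbs. Setting $f_l$ and $g_l$ to be the two summands of the lemma's left-hand side at scaling level $l$ (or an appropriate linear combination), the master identity at level $l$ yields an inequality of the form $f_l+g_l-g_{l-1}\lesssim F_l$ where $F_l$ collects $\|L_{|l|+|a|+1}\|^2$, $\|N_{|l|+|a|+1}\|^2$ together with the additional higher-regularity contributions $\mu^2\|L_{|l|+|a|+2}\|^2$ and $\mu^2\|\mathcal{N}_{|l|+|a|+2}\|^2$ that arise when the second summand $\|\mu t\Delta V^{(l,a)}\|^2$ is controlled through the second equation. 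Summing from $l=0$ up to $l=\alpha$ then gives $\sum_{m\le\alpha}(f_m+g_m)\lesssim\sum_{m\le\alpha}F_m$, from which the stated bound follows at level $\alpha$.

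The delicate point, and the main obstacle, is obtaining the separate control of $\|\mu t\Delta V^{(\alpha,a)}\|^2$ needed to set up the iteration. Using the second equation of \eqref{VisElas-Gamma} one writes $\Delta V^{(\alpha,a)}=\partial_t\nabla\cdot H^{(\alpha,a)}-\nabla\cdot f^2_{\alpha a}$; combined with the scaling identity $t\partial_t\nabla\cdot H^{(\alpha,a)}=\nabla\cdot H^{(\alpha+1,a)}-r\partial_r\nabla\cdot H^{(\alpha,a)}$, this gives
\begin{align*}
\mu t\Delta V^{(\alpha,a)}=\mu\nabla\cdot H^{(\alpha+1,a)}-\mu r\partial_r\nabla\cdot H^{(\alpha,a)}-\mu t\nabla\cdot f^2_{\alpha a}.
\end{align*}
The first and last terms on the right are immediately dominated by $\mu\|L_{|\alpha|+|a|+2}\|$ and $\mu\|\mathcal{N}_{|\alpha|+|a|+2}\|$. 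The middle one is split via $r\partial_r=(r-t)\partial_r+t\partial_r$: Young's inequality with parameter $\nu$ absorbs the $(r-t)$-weighted piece into $\nu X_{|\alpha|+|a|+1}+C_\nu\mu^2\|L_{|\alpha|+|a|+2}\|^2$, crucially using the compatibility relation $\Delta H=\nabla(\nabla\cdot H)+\nabla^\perp f^3_{\alpha a}$ coming from the constraint \eqref{constr-Gamma} to express the second spatial derivative of $H$ in terms of $\nabla(\nabla\cdot H)$ modulo controllable $f^3$ corrections; the $t$-weighted piece reconnects with the master identity of the first paragraph and is absorbed through the iteration. Without this constraint-based rewriting one would be forced to bound $\nabla^2 H^{(\alpha,a)}$ through $X_{|\alpha|+|a|+2}$, which is one order higher than what the lemma permits.
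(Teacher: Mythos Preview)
Your overall architecture---derive the scaling identity from the first equation of \eqref{VisElas-Gamma}, then feed the resulting recursion in the scaling index into Lemma~\ref{lemItera}---matches the paper. The gap is in your treatment of $\|\mu t\Delta V^{(\alpha,a)}\|_{L^2}^2$.

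You try to bound this quantity \emph{directly} via the second equation, arriving at
\[
\mu t\Delta V^{(\alpha,a)}=\mu\nabla\cdot H^{(\alpha+1,a)}-\mu r\partial_r\nabla\cdot H^{(\alpha,a)}-\mu t\nabla\cdot f^2_{\alpha a},
\]
and then splitting $r=(r-t)+t$ in the middle term. But $\|\mu(r-t)\partial_r\nabla\cdot H^{(\alpha,a)}\|_{L^2}^2$ carries \emph{two} spatial derivatives on $H^{(\alpha,a)}$ together with the $(r-t)$ weight. Neither $\nu X_{|\alpha|+|a|+1}$ (which contains only $\|\langle r-t\rangle\nabla U^{(\alpha,a)}\|^2$, one derivative) nor $C_\nu\mu^2\|L_{|\alpha|+|a|+2}\|_{L^2}^2$ (two derivatives but \emph{no} weight) can absorb it; the constraint identity $\Delta H=\nabla(\nabla\cdot H)+\nabla^\perp f^3$ does not help, since you already sit on $\partial_r\nabla\cdot H$ and the constraint does not lower the derivative count. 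The $t$-weighted piece is worse: rewriting $t\partial_r\nabla\cdot H$ via scaling reproduces $\mu t\Delta V$ itself, so the argument becomes circular rather than ``reconnecting with the master identity''.

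The paper avoids this by never estimating $\|\mu t\Delta V^{(\alpha,a)}\|^2$ in isolation. It squares the \emph{full} identity (including all lower $l$) and works with the resulting cross terms. Two integration-by-parts cancellations are decisive: $\int r\partial_r V^{(\alpha,a)}\cdot\mu t\Delta V^{(\alpha,a)}\,dx=0$, and $\int t\nabla\cdot H^{(\alpha,a)}\cdot\mu r\partial_r\nabla\cdot H^{(\alpha,a)}\,dx\le 0$. What survives is a cross term $\int t\nabla\cdot H^{(\alpha,a)}\cdot\mu(S\nabla\cdot H^{(\alpha,a)}-t\nabla\cdot f^2_{\alpha a})\,dx$, in which only \emph{one} factor carries the dangerous $t$-weight on a single derivative. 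A cutoff at $r\sim t$ then handles it: away from the cone $t\lesssim\langle t-r\rangle$ gives the $\nu X_{|\alpha|+|a|+1}$ contribution; near the cone one substitutes the first equation for $\nabla\cdot H^{(\alpha,a)}$, which trades the $t$-weight for controllable terms. Your sketch is missing precisely these IBP cancellations and the light-cone cutoff.
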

\begin{remark}
While the lemma becomes trivial if there is no viscosity, the   viscous version is nontrivial.
The viscosity is the main reason we only have an $L^2$ bound  rather than a pointwise
bound  as in the next lemma.
\end{remark}

\begin{remark}
The terms on the left hand side are of order $|\alpha|+|a|+1$ except for the viscous term, but
on the right hand side, the order is $|\alpha|+|a|+2$.
This means that we will encounter the problem of losing derivatives in future discussions.
If $\mu=0$, one has no derivative loss problem.
\end{remark}

\begin{proof}
Denote
\begin{equation*}
J=\|r\partial_rV^{(\alpha,a)}+t\nabla\cdot H^{(\alpha,a)}\|_{L^2}^2
+\sum\limits_{l=0}^\alpha  (C_\alpha^l)^2\|\mu t\Delta V^{(l,a)}\|_{L^2}^2.
\end{equation*}
We first claim that there holds the following fact:
\begin{align}\label{ll}
J&\leq 6\sum\limits_{l=0}^{\alpha-1} (C_\alpha^l)^2\|\mu t\Delta V^{(l, a)}\|^2_{L^2}
+\nu\|\langle t-r\rangle \nabla\cdot H^{(\alpha,a)}\|_{L^2}^2  \nonumber\\
&\quad+C \langle \mu\rangle^2\|L_{|\alpha|+|a|+1} \|^2_{L^2}
+C_{\nu} \mu^2\|L_{|\alpha|+|a|+2} \|^2_{L^2}
\nonumber\\[-4mm]\nonumber\\
&\quad+C \|N_{|\alpha|+|a|+1} \|^2_{L^2}
+C_{\nu} \mu^2\|\mathcal{N}_{|\alpha|+|a|+2} \|^2_{L^2},
\end{align}
where $\nu$ can be any positive constant,
$C_\nu$ is a constant depending only on $\alpha, a$ and $\nu$.

Once this assertion \eqref{ll} becomes true, note the assumption $\mu\leq1$,
one can immediately see that Lemma \ref{lemLinear}
is proved by applying Lemma \ref{lemItera} to \eqref{ll}.
Thus it suffices to prove  \eqref{ll}.

Multiplying the first equation of \eqref{VisElas-Gamma} by $t$
and using the scaling operator, one gets
\begin{equation*}
r\partial_rV^{(\alpha,a)}+t\nabla\cdot H^{(\alpha,a)}
+\mu t\Delta\sum\limits_{l=0}^\alpha C_\alpha^l (-1)^{\alpha-l}V^{(l, a)}
=SV^{(\alpha,a)}-tf^1_{\alpha a}.
\end{equation*}
Taking $L^2$ norm for the above equation, one has
\begin{align} \label{ll0}
J&=-2\int_{\mathbb{R}^2}
(r\partial_rV^{(\alpha,a)}+t\nabla\cdot H^{(\alpha,a)})\cdot
\mu t\Delta\sum\limits_{l=0}^{\alpha} C_\alpha^l (-1)^{\alpha-l}V^{(l, a)}dx \nonumber\\
&\quad+\|SV^{(\alpha,a)}-tf^1_{\alpha a}\|^2_{L^2}.
\end{align}
To prove \eqref{ll}, we need to  deal with
the right hand of \eqref{ll0}.

By separating the highest order terms from the lower order ones,
\eqref{ll0} can be organized as:
\begin{align} \label{ll1}
J&=\underbrace{-2\int_{\mathbb{R}^2}
(r\partial_rV^{(\alpha,a)}+t\nabla\cdot H^{(\alpha,a)})\cdot
\mu t\Delta\sum\limits_{l=0}^{\alpha-1} C_\alpha^l (-1)^{\alpha-l}V^{(l, a)}dx
+\|SV^{(\alpha,a)}-tf^1_{\alpha a}\|^2_{L^2}}_{J_3}\nonumber\\
&\quad\underbrace{-2\int_{\mathbb{R}^2} r\partial_rV^{(\alpha,a)}\cdot\mu t\Delta V^{(\alpha,a)}dx}_{J_2}
\underbrace{-2\int_{\mathbb{R}^2} t\nabla\cdot H^{(\alpha,a)}\cdot\mu t\Delta V^{(\alpha,a)}dx}_{J_1}.
\end{align}
Here $J_3$ refers to the lower order term,
$J_1$ and $J_2$ refer to the highest order terms.

By H\"{o}lder's inequality, $J_3$
can be estimated by
\begin{eqnarray*}
&&\frac12\|r\partial_rV^{(\alpha,a)}+t\nabla\cdot H^{(\alpha,a)}\|^2_{L^2}
+2\sum\limits_{l=0}^{\alpha-1} (C_\alpha^l)^2 \|\mu t\Delta V^{(l, a)}\|^2_{L^2}\\\nonumber
&&\quad\quad\quad\quad\ +\ 2\|SV^{(\alpha,a)}\|^2_{L^2}+2\|tf^1_{\alpha a}\|^2_{L^2}.
\end{eqnarray*}
For $J_2$, one can deduce from integration by parts to get that
\begin{equation*}
J_2=-2\int_{\mathbb{R}^2} r\partial_rV^{(\alpha,a)}\cdot\mu t\Delta V^{(\alpha,a)}dx=0.
\end{equation*}

It remains to estimate $J_1$, which can not be treated simply by Cauchy inequality due to an extra $t$-factor.
We will refer to the inherent structure of the systems.

Applying  divergence operator to the second equation of \eqref{VisElas-Gamma},
one gets
\begin{equation*}
\Delta V^{(\alpha,a)}=
\partial_t\nabla\cdot H^{(\alpha,a)}-\nabla\cdot f^2_{\alpha a}.
\end{equation*}
Inserting the above expression into $J_1$,
and employing the scaling operator, we have
\begin{align*}
J_1&=-2\int_{\mathbb{R}^2} t\nabla\cdot H^{(\alpha,a)}
\cdot \mu(t\partial_t\nabla\cdot H^{(\alpha,a)}-t\nabla\cdot f^2_{\alpha a} )dx\\
&=-2\int_{\mathbb{R}^2} t\nabla\cdot H^{(\alpha,a)}
\cdot\mu(-r\partial_r\nabla\cdot H^{(\alpha,a)}+S\nabla\cdot H^{(\alpha,a)}
-t\nabla\cdot f^2_{\alpha a})dx.
\end{align*}
In view of the fact that
\begin{align*}
2\int_{\mathbb{R}^2} t\nabla\cdot H^{(\alpha,a)}
\cdot\mu r\partial_r\nabla\cdot H^{(\alpha,a)}dx
=-2\int_{\mathbb{R}^2} \mu t|\nabla\cdot H^{(\alpha,a)}|^2 dx,
\end{align*}
we get
\begin{align*}
&J_1\leq
-2\int_{\mathbb{R}^2} t\nabla\cdot H^{(\alpha,a)}
\cdot\mu(S\nabla\cdot H^{(\alpha,a)}-t\nabla\cdot f^2_{\alpha a})dx.
\end{align*}
Now we need to estimate the integral in different regions, separately.
To do this, define a radial cut-off function $\varphi\in C^\infty(\mathbb{R}^2)$ which satisfies
\begin{equation}\nonumber
\varphi = \begin{cases}1,\quad {\rm if}\ \frac{3}{4} \leq r \leq \frac{6}{5}\\
0, \quad {\rm if}\ r < \frac{2}{3}\ {\rm or}\ r >
\frac{5}{4}\end{cases},\quad |\nabla\varphi| \lesssim 1.
\end{equation}
For each fixed $t \geq 1$, let $\varphi^t(x) = \varphi(x/\langle
t\rangle)$. Clearly, one has
$$\varphi^t(x) \equiv 1\ \ {\rm for}\ \frac{3\langle t \rangle}{4} \leq r \leq
\frac{6\langle t \rangle}{5},\quad \varphi^t(x) \equiv 0\ \ {\rm
for}\ r \leq \frac{2\langle t \rangle}{3}\ {\rm or}\ r \geq
\frac{5\langle t \rangle}{4}$$ and $$|\nabla\varphi^t(x)| \lesssim
\langle t\rangle^{-1}.$$
Consequently,
\begin{align*} 
J_1&\leq-2\int_{\mathbb{R}^2} t\nabla\cdot H^{(\alpha,a)}
\cdot\mu(S\nabla\cdot H^{(\alpha,a)}-t\nabla\cdot f^2_{\alpha a})dx \nonumber\\
&=\underbrace{-2\int_{\mathbb{R}^2} (1-\varphi^t(x)) t\nabla\cdot H^{(\alpha,a)}
\cdot\mu(S\nabla\cdot H^{(\alpha,a)}-t\nabla\cdot f^2_{\alpha a})dx}_{J_{11}} \nonumber\\
&\quad \underbrace{-2\int_{\mathbb{R}^2} \varphi^t(x) t\nabla\cdot H^{(\alpha,a)}
\cdot\mu(S\nabla\cdot H^{(\alpha,a)}-t\nabla\cdot f^2_{\alpha a})dx}_{J_{12}}.
\end{align*}
We now estimate $J_{11}$. Note on the support of $1-\varphi^t(x)$,
we have $t\lesssim \langle t-r\rangle $.
Thus one can estimate $J_{11}$ as follows:
\begin{equation*}
J_{11}\leq \nu\|\langle t-r\rangle \nabla\cdot H^{(\alpha,a)}\|_{L^2}^2
+\mu^2 C_{\nu}\big( \|S\nabla\cdot H^{(\alpha,a)}\|^2_{L^2}
+\|t\nabla\cdot f^2_{\alpha a}\|^2_{L^2} \big),
\end{equation*}
where $\nu$ can be any positive constant,
$C_\nu$ is a constant depending on $\nu$.

For $J_{12}$, employing the first equation of \eqref{VisElas-Gamma},
we have
\begin{align*}
J_{12}&=-2\int_{\mathbb{R}^2} \varphi^t(x) t
\big[\partial_tV^{(\alpha,a)}
-\mu\Delta\sum\limits_{l=0}^\alpha C_\alpha^l (-1)^{\alpha-l}V^{(l, a)}
-f^1_{\alpha a}\big]\\
&\quad\quad\quad\quad \cdot\mu(S\nabla\cdot H^{(\alpha,a)}-t\nabla\cdot f^2_{\alpha a})dx\\
&=\underbrace{-2\int_{\mathbb{R}^2} \varphi^t(x) t
\partial_tV^{(\alpha,a)}
\cdot\mu(S\nabla\cdot H^{(\alpha,a)}-t\nabla\cdot f^2_{\alpha a})dx}_{J_{121}}\\
&\quad+\underbrace{2\int_{\mathbb{R}^2} \varphi^t(x)
(\mu t\Delta\sum\limits_{l=0}^\alpha C_\alpha^l (-1)^{\alpha-l}V^{(l, a)}+tf^1_{\alpha a})
\cdot\mu(S\nabla\cdot H^{(\alpha,a)}-t\nabla\cdot f^2_{\alpha a})dx}_{J_{122}}.
\end{align*}
$J_{122}$ can be directly bounded as:
\begin{align*}
J_{122}&\leq\frac 14\sum\limits_{l=0}^\alpha (C_\alpha^l)^2 \|\mu t\Delta V^{(l, a)}\|^2_{L^2}
+5\mu^2\|S\nabla\cdot H^{(\alpha,a)}-t\nabla\cdot f^2_{\alpha a}\|^2_{L^2}
+\|t f^1_{ \alpha a}\|^2_{L^2}\\
&\leq\frac 14\sum\limits_{l=0}^\alpha (C_\alpha^l)^2 \|\mu t\Delta V^{(l, a)}\|^2_{L^2}
+10\mu^2 \|S\nabla\cdot H^{(\alpha,a)}\|_{L^2}\\
&\quad\quad\quad +\ 10\mu^2 \|t \nabla\cdot f^2_{ \alpha a}\|^2_{L^2}
+\|t f^1_{\alpha a}\|^2_{L^2}.
\end{align*}
At last, we write
\begin{align*}
J_{121}&=2\int_{\mathbb{R}^2} \varphi^t(x) \mu (r\partial_rV^{(\alpha,a)} -SV^{(\alpha,a)})
\cdot(S\nabla\cdot H^{(\alpha,a)}-t\nabla\cdot f^2_{\alpha a})dx \\
&=\underbrace{2\int_{\mathbb{R}^2} \varphi^t(x) \mu r\partial_rV^{(\alpha,a)}
\cdot(S\nabla\cdot H^{(\alpha,a)}-t\nabla\cdot f^2_{\alpha a})dx}_{J_{1211}} \\
&\quad\underbrace{-2\int_{\mathbb{R}^2} \varphi^t(x) \mu SV^{(\alpha,a)}
\cdot(S\nabla\cdot H^{(\alpha,a)}-t\nabla\cdot f^2_{\alpha a})dx}_{J_{1212}} .
\end{align*}
$J_{1212}$ can be bounded by
\begin{equation*}
J_{1212}\leq
2 \|SV^{(\alpha,a)}\|^2_{L^2}
+\mu^2\|S\nabla\cdot H^{(\alpha,a)}\|^2_{L^2}+\mu^2\|t\nabla\cdot f^2_{\alpha a}\|^2_{L^2}.
\end{equation*}
For $J_{1211}$, note on the support of $\varphi^t(x)$, we have $\langle t\rangle\sim r $.
Hence one deduces that
\begin{align*}
J_{1211}
&=2\int_{\mathbb{R}^2} \varphi^t(x) \mu r\partial_rV^{(\alpha,a)}
\cdot(\nabla \cdot \widetilde{S}H^{(\alpha,a)}-t\nabla\cdot f^2_{\alpha a})dx\\
&=-2\int_{\mathbb{R}^2} \nabla \big(\varphi^t(x) \mu r\partial_rV^{(\alpha,a)}\big)
\cdot(\widetilde{S}H^{(\alpha,a)}-t f^2_{\alpha a})dx\\
&\leq2\int_{\mathbb{R}^2} |\nabla\varphi^t(x) \mu r\partial_rV^{(\alpha,a)}
\cdot(\widetilde{S}H^{(\alpha,a)}-t f^2_{\alpha a})|dx\\
&\quad +2\int_{\mathbb{R}^2} \varphi^t(x) \mu |\nabla V^{(\alpha,a)}|
\cdot|\widetilde{S}H^{(\alpha,a)}-t f^2_{\alpha a}|dx\\
&\quad +2\int_{\mathbb{R}^2} \varphi^t(x) \mu r \partial_r\nabla V^{(\alpha,a)}
\cdot(\widetilde{S}H^{(\alpha,a)}-t f^2_{\alpha a})dx\\
&\leq \mu^2\|\nabla V^{(\alpha,a)}\|^2_{L^2}
 +\frac 14\|\mu t \nabla^2 V^{(\alpha,a)}\|^2_{L^2}
 +C\|\widetilde{S} H^{(\alpha,a)}-t f^2_{\alpha a}\|^2_{L^2}.
\end{align*}
Combing all the above estimates together, we conclude
by the commutation between the generalized operators that
\begin{align*}
&J=\|r\partial_rV^{(\alpha,a)}+t\nabla\cdot H^{(\alpha,a)}\|_{L^2}^2
+\sum_{l=0}^\alpha (C_\alpha^l)^2 \|\mu t\Delta V^{(l,a)}\|_{L^2}^2\\
&\leq \frac12\|r\partial_rV^{(\alpha,a)}+t\nabla\cdot H^{(\alpha,a)}\|^2_{L^2}
+\frac 12 \|\mu t\Delta V^{(\alpha, a)}\|^2_{L^2}
+3\sum\limits_{l=0}^{\alpha-1} (C_\alpha^l)^2 \|\mu t\Delta V^{(l, a)}\|^2_{L^2} \\
&+\nu\|\langle t-r\rangle \nabla\cdot H^{(\alpha,a)}\|_{L^2}^2
+C \langle \mu\rangle^2\|L_{|\alpha|+|a|+1} \|^2_{L^2}
+C_{\nu} \mu^2\|L_{|\alpha|+|a|+2} \|^2_{L^2}
\nonumber\\[-4mm]\nonumber\\
&+C \|N_{|\alpha|+|a|+1} \|^2_{L^2}
+C_{\nu} \mu^2\|\mathcal{N}_{|\alpha|+|a|+2} \|^2_{L^2}.
\end{align*}
Absorbing the first two terms on the right hand side in the above
yields \eqref{ll}.
Thus the lemma is proved.
\end{proof}

We have the following pointwise estimates.
\begin{lem}\label{Pointwise-Estimate}
Suppose that $(V,H)\in H^{\kappa-1}_{\Gamma}$ solves \eqref{VisElas3} and \eqref{constr}.
Then for all $|\alpha|+|a|\leq \kappa-2$, there holds
\begin{eqnarray}
&&|r(\nabla\cdot H^{(\alpha,a)})\omega
+t\nabla V^{(\alpha,a)}| \lesssim
L_{|\alpha|+|a|+1}+N_{|\alpha|+|a|+1}, \label{LL2}\\
&&|(t\pm r)(\nabla V^{(\alpha,a)}
\pm \nabla\cdot H^{(\alpha,a)}\omega)|  \nonumber\\
&&\quad \lesssim
L_{|\alpha|+|a|+1}+N_{|\alpha|+|a|+1}+ |r\partial_r V^{(\alpha,a)}
+t\nabla\cdot H^{(\alpha,a)}|, \label{LL3}\\
&&r|\partial_rH^{(\alpha,a)}\cdot\omega^\perp|
\lesssim L_{|\alpha|+|a|+1}+N_{|\alpha|+|a|+1},  \label{LL4}\\
&&r|\partial_rV^{(\alpha,a)}
+\partial_r H^{(\alpha,a)}\cdot\omega| \nonumber\\
&&\quad \lesssim L_{|\alpha|+|a|+1}+N_{|\alpha|+|a|+1}+|r\partial_r V^{(\alpha,a)}+t\nabla\cdot H^{(\alpha,a)}|. \label{LL5}
\end{eqnarray}
\end{lem}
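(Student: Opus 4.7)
The plan is to prove all four pointwise identities using only the transport-type second equation of \eqref{VisElas-Gamma} and the constraint \eqref{constr-Gamma}, thereby avoiding the viscous first equation altogether. Multiplying the second equation by $t$ and rewriting $t\partial_t = S - r\partial_r = \widetilde{S} + 1 - r\partial_r$ yields the master identity
$$
r\partial_r H^{(\alpha,a)} + t\nabla V^{(\alpha,a)} = \widetilde{S}H^{(\alpha,a)} + H^{(\alpha,a)} - t f^2_{\alpha a}.
$$
Multiplying the constraint by $r$ and using the polar decomposition $\nabla^\perp = \omega^\perp\partial_r - \tfrac{\omega}{r}\partial_\theta$ gives a second identity
$$
r\partial_r H^{(\alpha,a)}\cdot\omega^\perp = \partial_\theta H^{(\alpha,a)}\cdot\omega + r f^3_{\alpha a}.
$$
Each right-hand side is pointwise controlled by $L_{|\alpha|+|a|+1} + N_{|\alpha|+|a|+1}$, since $\widetilde{S}H$ and $\partial_\theta H = \widetilde{\Omega}H + H^\perp$ are of order $|\alpha|+|a|+1$, and $t|f^2|$, $(t+r)|f^3|$ belong to $N_{|\alpha|+|a|+1}$ by definition.

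With these two identities in hand, each of the four bounds follows by projecting onto the frame $\{\omega,\omega^\perp\}$. The estimate \eqref{LL4} is a direct rearrangement of the constraint identity. For \eqref{LL2}, the $\omega$-projection of the master identity combined with $r\nabla\cdot H = r\partial_r H\cdot\omega + \partial_\theta H\cdot\omega^\perp$ gives the radial component $r\nabla\cdot H + t\partial_r V$ bounded by $L+N$, while the $\omega^\perp$-projection, after substituting the constraint identity to eliminate $r\partial_r H\cdot\omega^\perp$, produces the angular piece $\tfrac{t}{r}\partial_\theta V$ bounded by $L+N$.

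Next, \eqref{LL3} follows from the purely algebraic identity
$$
(t\pm r)(\partial_r V \pm \nabla\cdot H) = (r\partial_r V + t\nabla\cdot H) \pm (r\nabla\cdot H + t\partial_r V),
$$
whose first summand is the buffer term allowed on the right-hand side and whose second is the radial part of \eqref{LL2}; the tangential component $\tfrac{t\pm r}{r}\partial_\theta V = \tfrac{t}{r}\partial_\theta V \pm \partial_\theta V$ is already controlled above. Finally, \eqref{LL5} reduces to \eqref{LL3}: the decomposition $r\partial_r H\cdot\omega = r\nabla\cdot H - \partial_\theta H\cdot\omega^\perp$ gives
$$
r(\partial_r V + \partial_r H\cdot\omega) = r(\partial_r V + \nabla\cdot H) - \partial_\theta H\cdot\omega^\perp,
$$
and $r(\partial_r V + \nabla\cdot H)$ is dominated by $(t+r)(\partial_r V + \nabla\cdot H)$, which is the $(+)$-radial case of \eqref{LL3}.

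The conceptual point, and the only real thing to watch, is that viscosity must stay out of the pointwise identities; this is why we rely exclusively on the second equation and the constraint. All viscous information is deliberately quarantined in the buffer $|r\partial_r V^{(\alpha,a)} + t\nabla\cdot H^{(\alpha,a)}|$ allowed on the right of \eqref{LL3} and \eqref{LL5}, which is precisely the quantity whose $L^2$ norm is controlled (with viscous remainders) by Lemma~\ref{lemLinear}. Beyond that, the argument reduces to routine algebraic bookkeeping with the $\{\omega,\omega^\perp\}$ frame and the definitions of $L_{|\alpha|+|a|+1}$ and $N_{|\alpha|+|a|+1}$.
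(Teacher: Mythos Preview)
Your proposal is correct and follows essentially the same route as the paper: both derive the master identity $r\partial_r H^{(\alpha,a)}+t\nabla V^{(\alpha,a)}=SH^{(\alpha,a)}-tf^2_{\alpha a}$ from the second equation of \eqref{VisElas-Gamma}, use the constraint \eqref{constr-Gamma} together with the polar decomposition \eqref{der-decomp} to handle the $\omega^\perp$-component, and then obtain \eqref{LL2}--\eqref{LL5} by algebraic projection onto the $\{\omega,\omega^\perp\}$ frame. The only cosmetic difference is that the paper organizes the decomposition in vector form (equations \eqref{LEE2}--\eqref{LEE3}) rather than component by component, but the content and order of the steps are identical.
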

\begin{proof}
Multiplying the second equation of \eqref{VisElas-Gamma} by $t$ and using the scaling operator,
we can rearrange the resulting systems as follows:
\begin{equation} \label{LEE1}
r\partial_rH^{(\alpha,a)}
+t\nabla V^{(\alpha,a)}
=SH^{(\alpha,a)}-tf^2_{\alpha a}.
\end{equation}
Employing \eqref{der-decomp}, one has
\begin{align}\label{LEE2}
&r\partial_rH^{(\alpha,a)}
+t\nabla V^{(\alpha,a)}\nonumber\\
&=(r\partial_rH^{(\alpha,a)}\cdot \omega )\omega
+(r\partial_rH^{(\alpha,a)}\cdot \omega^\perp)\omega^\perp +t\nabla V^{(\alpha,a)} \nonumber\\
&=(r\nabla\cdot H^{(\alpha,a)})\omega
-(\Omega H^{(\alpha,a)}\cdot\omega^\perp)\omega
+(r\nabla^\perp\cdot H^{(\alpha,a)})\omega^\perp \nonumber\\
&\quad +(\Omega H^{(\alpha,a)}\cdot\omega)\omega^\perp
+t\nabla V^{(\alpha,a)}  \nonumber\\
&=(r\nabla\cdot H^{(\alpha,a)})\omega
+t\nabla V^{(\alpha,a)}
+rf_{\alpha a}^3\omega^\perp \nonumber\\
&\quad -(\Omega H^{(\alpha,a)}\cdot\omega^\perp)\omega
+(\Omega H^{(\alpha,a)}\cdot\omega)\omega^\perp.
\end{align}
In view of the relation between $S$ and $\widetilde{S}$,  \eqref{LL2} is clear from \eqref{LEE1} and \eqref{LEE2}.
Next, note that
\begin{equation}\label{LEE3}
r\nabla V^{(\alpha,a)}
+t(\nabla\cdot H^{(\alpha,a)})\omega
=(r\partial_r V^{(\alpha,a)}
+t\nabla\cdot H^{(\alpha,a)})\omega
+\Omega V\omega^\perp.
\end{equation}
\eqref{LL3} is a direct consequence of \eqref{LL2}
and \eqref{LEE3}.

The estimate of \eqref{LL4} follows directly from \eqref{constr-Gamma} and \eqref{der-decomp}.
To check \eqref{LL5}, by analog with the above proof, we write
\begin{align*}
&r(\partial_rV^{(\alpha,a)}
+\partial_r H^{(\alpha,a)}\cdot\omega)\\
&=\omega\cdot\Large[r\nabla V^{(\alpha,a)}
+(r\partial_r H^{(\alpha,a)}\cdot\omega)\omega
\Large]\\
&=\omega\cdot\Large[r\nabla V^{(\alpha,a)}
+r\nabla\cdot H^{(\alpha,a)}\omega
-(\Omega H^{(\alpha,a)}\cdot\omega^\perp)\omega \Large],
\end{align*}
from which \eqref{LL5} follows from \eqref{LL3}.
\end{proof}

Next we are going to estimate the nonlinearities. The following lemma says that the
nonlinearities have good pointwise decay property near the light cone if we disregard
the Riesz transform. This lemma is not only used in the estimate of weighted $L^2$ norm
in this section, but also plays one of the key roles in the energy estimate in next section.
\begin{lem}\label{Lem-Good-f}
Let $f_{\alpha a}^2$, $f_{\alpha a}^3$ denote the nonlinearities in \eqref{VisElas-Gamma-f123}. Then
for all $|\alpha|+|a|\leq \kappa-3$, there hold
\begin{equation}\label{good-f2}
|f_{\alpha a}^2|\lesssim \frac 1r
\sum\limits_{\tiny{\begin{matrix}|\beta|+|\gamma|\leq |\alpha|
\\ |b|+|c|\leq |a| \end{matrix}}}
|V^{(|\beta|,|b|+1)}| |H^{(|\gamma|,|c|+1)}|,
\end{equation}
\begin{equation}\label{good-f3}
|f_{\alpha a}^3|\lesssim \frac 1r
\sum\limits_{\tiny{\begin{matrix}|\beta|+|\gamma|\leq |\alpha|
\\ |b|+|c|\leq |a| \end{matrix}}}
|H^{(|\beta|,|b|+1)}| |H^{(|\gamma|,|c|+1)}|,
\end{equation}
\begin{equation}\label{good-f2d}
|\nabla\cdot f_{\alpha a}^2|\lesssim\frac 1r
\sum\limits_{\tiny{\begin{matrix}|\beta|+|\gamma|\leq |\alpha|
\\ |b|+|c|\leq |a| \end{matrix}}}
|V^{(|\beta|,|b|+2)}| |H^{(|\beta|,|c|+2)}|.
\end{equation}

Furthermore, recall the definition of  $f^{ij}_{\alpha a} $ in \eqref{fij}.  Then,  there holds
\begin{align}\label{good-f1}
|f_{\alpha a}^{ij}|\lesssim& \frac 1r
\sum\limits_{\tiny\begin{matrix}|b|+|c|\leq |a|\\
|\beta|+|\gamma|\leq |\alpha|\end{matrix}}
\big(|V^{(|\beta|,|b|+1)}| |V^{(|\gamma|,|c|+1)}|
+|H^{(|\beta|,|b|+1)}| |H^{|\gamma|,|c|+1}|
\big)\nonumber\\
&+\sum\limits_{\tiny\begin{matrix}b+c=a\\
\beta+\gamma=\alpha\end{matrix}}
\Big[|\partial_rV^{(\beta,b)}
+\partial_rH^{(\beta,b)}\cdot\omega|
\big(|\nabla V^{(\gamma,c)}|
+|\nabla H^{(\gamma,c)}|\big)\nonumber\\
&\qquad\qquad+|\partial_rH^{(\beta,b)}\cdot\omega^\perp \partial_rH^{(\gamma,c)}\cdot\omega^\perp|\Big].
\end{align}
\end{lem}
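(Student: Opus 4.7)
The entire lemma is essentially an algebraic consequence of the two-dimensional polar decomposition of the gradient. From $\nabla = \omega\partial_r + \tfrac{\omega^\perp}{r}\partial_\theta$ and the dual $\nabla^\perp = \omega^\perp\partial_r - \tfrac{\omega}{r}\partial_\theta$, together with $\omega\cdot\omega^\perp = 0$ and $|\omega|=|\omega^\perp|=1$, a direct multiplication and cancellation yields the universal identity
\[
\nabla_j^\perp u\,\partial_j w \;=\; \frac{1}{r}\bigl(\Omega u\,\partial_r w - \partial_r u\,\Omega w\bigr),
\]
valid for arbitrary smooth scalars $u,w$, where $\Omega=\partial_\theta$. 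This is precisely the mechanism that pulls a $1/r$ out of every bilinear of the form $\nabla^\perp H\cdot\nabla V$ or $\nabla^\perp H_2\cdot\nabla H_1$. Since $\Omega V=\widetilde\Omega V$ on scalars and $\Omega H = \widetilde\Omega H + H^\perp$ on the vector field $H$, each application of $\Omega$ raises the $|a|$-index by one at the cost of a zeroth-order $H^\perp$-correction absorbed by the $\leq$-convention built into $H^{(|\beta|,|b|+1)}$. Combining with $|\partial_r u|\leq|\nabla u|$ and applying the identity term-by-term in \eqref{VisElas-Gamma-f123} for $f^2_{\alpha a}$ and $f^3_{\alpha a}$ yields \eqref{good-f2} and \eqref{good-f3} immediately.

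For \eqref{good-f2d}, I expand $\nabla\cdot f^2_{\alpha a}$ by Leibniz and exploit the auxiliary identity $\partial_i\nabla^\perp_j H^{(\beta,b)}_i = \nabla^\perp_j(\nabla\cdot H^{(\beta,b)})$, which guarantees that $\nabla\cdot f^2_{\alpha a}$ remains a sum of bilinear products of the form $\nabla^\perp(\,\cdot\,)\cdot\nabla(\,\cdot\,)$. The key identity then reapplies to produce the $1/r$ factor; the single extra spatial derivative distributes among the two factors, explaining the indices $|b|+2$ and $|c|+2$ on the right-hand side.

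The estimate \eqref{good-f1} for $f^{ij}_{\alpha a}$ is the most subtle point and the only place where the Eulerian strong null structure genuinely intervenes. Substituting $\partial_i = \omega_i\partial_r + \tfrac{\omega^\perp_i}{r}\Omega$ (and similarly for $\partial_j$), the product $\partial_i V\,\partial_j V - \partial_i H\cdot\partial_j H$ decomposes into the purely radial piece $\omega_i\omega_j\bigl[\partial_r V^{(\beta,b)}\,\partial_r V^{(\gamma,c)} - \partial_r H^{(\beta,b)}\cdot\partial_r H^{(\gamma,c)}\bigr]$ plus three mixed pieces, each of which already carries a $1/r$ factor and is absorbed into the first sum on the right of \eqref{good-f1}. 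The radial piece has no a priori $1/r$; here I decompose $\partial_r H = (\partial_r H\cdot\omega)\omega + (\partial_r H\cdot\omega^\perp)\omega^\perp$ and apply the elementary algebraic identity
\[
ab - P\cdot Q \;=\; \tfrac12(a+P\cdot\omega)(b-Q\cdot\omega) + \tfrac12(b+Q\cdot\omega)(a-P\cdot\omega) - (P\cdot\omega^\perp)(Q\cdot\omega^\perp)
\]
with $a=\partial_r V^{(\beta,b)}$, $b=\partial_r V^{(\gamma,c)}$, $P=\partial_r H^{(\beta,b)}$, $Q=\partial_r H^{(\gamma,c)}$. Every summand on the right then contains at least one of the good unknowns $\partial_r V+\partial_r H\cdot\omega$ or $\partial_r H\cdot\omega^\perp$, exactly matching the second sum of \eqref{good-f1}. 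The argument is thus entirely algebraic and I do not anticipate a conceptual obstacle; the only bookkeeping requiring care is the systematic $\Omega\leftrightarrow\widetilde\Omega$ conversion on $H$-factors and verifying that the Leibniz expansion in \eqref{good-f2d} introduces no structurally different term (in particular no $\nabla^\perp\cdot H$, which is itself nonlinear via \eqref{constr-Gamma} and would need separate treatment).
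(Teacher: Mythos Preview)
Your plan is correct and follows essentially the same route as the paper: the polar decomposition $\nabla=\omega\partial_r+\tfrac{\omega^\perp}{r}\partial_\theta$ yields the key identity $\nabla^\perp_j u\,\nabla_j w=\tfrac{1}{r}(\partial_\theta u\,\partial_r w-\partial_r u\,\partial_\theta w)$, which immediately gives \eqref{good-f2}--\eqref{good-f2d}, and the radial--radial piece of $f^{ij}_{\alpha a}$ is handled by rewriting $\partial_r V\partial_r V-\partial_r H\cdot\partial_r H$ in terms of the good unknowns. The only cosmetic difference is that the paper uses the asymmetric splitting $(\partial_r V^{(\beta,b)}+\partial_r H^{(\beta,b)}\cdot\omega)\partial_r V^{(\gamma,c)}-\partial_r H^{(\beta,b)}\cdot\omega(\partial_r V^{(\gamma,c)}+\partial_r H^{(\gamma,c)}\cdot\omega)-\partial_r H^{(\beta,b)}\cdot\omega^\perp\,\partial_r H^{(\gamma,c)}\cdot\omega^\perp$ rather than your symmetrized version, but both are equivalent once one bounds the non-good factor by $|\nabla V|+|\nabla H|$ and exploits the $(\beta,b)\leftrightarrow(\gamma,c)$ symmetry in the sum.
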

Recall that the introduction of $f_{\alpha a}^{ij}$ came  from $f_{\alpha a}^1$
by dropping the Riesz transforms.
\begin{remark}
Note that all the nonlinearities satisfy the strong null
condition, our estimates always contain one spatial derivative  in the good unknowns or gain $\langle t\rangle^{-1}$ near the light cone.
\end{remark}
\begin{remark}
In the highest order energy estimate of  the next section,
this lemma  can not be
used since it  causes a  derivative loss.
\end{remark}
\begin{proof}
Employing \eqref{der-decomp}, we write
\begin{align*}
f^2_{\alpha a}=&\sum\limits_{\tiny\begin{matrix}b+c=a\\ \beta+\gamma=\alpha\end{matrix}}
C_{\alpha}^\beta C_a^b
(\nabla^\perp H^{(\beta,b)}\nabla V^{(\gamma,c)}) \\
=&\sum\limits_{\tiny\begin{matrix}b+c=a\\ \beta+\gamma=\alpha\end{matrix}}
C_{\alpha}^\beta C_a^b
(\partial_r H^{(\beta,b)}\otimes\omega^\perp
-\frac{1}{r}\partial_\theta H^{(\beta,b)}\otimes\omega )
(\omega \partial_r V^{(\gamma,c)}
+\frac{\omega^\perp}{r}\partial_\theta V^{(\gamma,c)})  \\
=&\frac{1}{r}
\sum\limits_{\tiny\begin{matrix}b+c=a\\ \beta+\gamma=\alpha\end{matrix}}
C_{\alpha}^\beta C_a^b
 (\partial_rH^{(\beta,b)}
 \partial_\theta V^{(\gamma,c)}
-\partial_\theta H^{(\beta,b)}
\partial_rV^{(\gamma,c)}).
\end{align*}
Thus \eqref{good-f2} is clear from the commutation between
$\partial_r$ and $\widetilde{S}$, $\Gamma$.
Note that \eqref{good-f3} can be estimated exactly in the same fashion, we omit the details.

To estimate \eqref{good-f2d}, we use \eqref{der-decomp} to get that
\begin{align*}
\nabla\cdot f_{\alpha a}^2
&=\nabla\cdot\sum\limits_{\tiny\begin{matrix}b+c=a\\ \beta+\gamma=\alpha\end{matrix}}
C_{\alpha}^\beta C_a^b
(\nabla^\perp H^{(\beta,b)}\nabla V^{(\gamma,c)})\\
&=\sum\limits_{\tiny\begin{matrix}b+c=a\\ \beta+\gamma=\alpha\end{matrix}}
C_{\alpha}^\beta C_a^b
(\nabla^\perp_j \nabla_iH^{(\beta,b)}_i\nabla_j V^{(\gamma,c)}
+\nabla^\perp_j H^{(\beta,b)}_i
\nabla_j \nabla_iV^{(\gamma,c)})  \\
&=\frac 1r \sum\limits_{\tiny\begin{matrix}b+c=a\\ \beta+\gamma=\alpha\end{matrix}}
C_{\alpha}^\beta C_a^b
(\partial_r \nabla_iH^{(\beta,b)}_i\partial_\theta V^{(\gamma,c)}
-\partial_\theta \nabla_iH^{(\beta,b)}_i\partial_r V^{(\gamma,c)}) \\
&\quad +\frac 1r \sum\limits_{\tiny\begin{matrix}b+c=a\\ \beta+\gamma=\alpha\end{matrix}}
C_{\alpha}^\beta C_a^b
(\partial_r H^{(\beta,b)}_i
\partial_\theta \nabla_iV^{(\gamma,c)}
-\partial_\theta H^{(\beta,b)}_i
\partial_r \nabla_iV^{(\gamma,c)}).
\end{align*}
By the commutation between the generalized operators,
\eqref{good-f2d} is clear.

To estimate \eqref{good-f1}, similarly we can get by \eqref{der-decomp} to deduce that
\begin{eqnarray*}
&&f_{\alpha a}^{ij}=\sum\limits_{\tiny\begin{matrix}b+c=a\\ \beta+\gamma=\alpha\end{matrix}}C_{\alpha}^\beta C_a^b
\big[ (\omega_i\partial_rV^{(\beta,b)}+\frac 1r\omega_i^\perp\Omega V^{(\beta,b)})
(\omega_j\partial_rV^{(\gamma,c)}+\frac 1r\omega_j^\perp\Omega V^{(\gamma,c)})\\[-6mm]\\
&&\qquad\qquad\qquad-
(\omega_i\partial_rH^{(\beta,b)}+\frac 1r\omega_i^\perp\Omega H^{(\beta,b)})\cdot
(\omega_j\partial_rH^{(\gamma,c)}+\frac 1r\omega_j^\perp\Omega H^{(\gamma,c)})
\big]\\
&&\quad\ =\sum\limits_{\tiny\begin{matrix}b+c=a\\ \beta+\gamma=\alpha\end{matrix}}C_{\alpha}^\beta C_a^b
\big[\omega_i\omega_j(\partial_rV^{(\beta,b)}\partial_rV^{(\gamma,c)}-\partial_rH^{(\beta,b)}\cdot\partial_rH^{(\gamma,c)})\\[-6mm]\\
&&\qquad\qquad\qquad
+\frac 1r\omega_i\omega_j^\perp\partial_rV^{(\beta,b)}
\Omega V^{(\gamma,c)}
+\frac 1r\omega_i^\perp\Omega V^{(\beta,b)}
(\omega_j\partial_rV^{(\gamma,c)}+\frac 1r\omega_j^\perp\Omega V^{(\gamma,c)})\\
&&\qquad\qquad
-\frac 1r\omega_i\omega_j^\perp\partial_rH^{(\beta,b)}\cdot
\Omega H^{(\gamma,c)}
-\frac 1r\omega_i^\perp\Omega H^{(\beta,b)}\cdot
(\omega_j\partial_rH^{(\gamma,c)}+\frac 1r\omega_j^\perp\Omega H^{(\gamma,c)})\big].
\end{eqnarray*}
Thus \eqref{good-f1} is obtained by the following organization
\begin{eqnarray*}
&&\partial_rV^{(\beta,b)}\partial_rV^{(\gamma,c)}-\partial_rH^{(\beta,b)}\cdot\partial_rH^{(\gamma,c)}\\
&&=(\partial_rV^{(\beta,b)}+\partial_rH^{(\beta,b)}\cdot\omega)\partial_rV^{(\gamma,c)}
-\partial_rH^{(\beta,b)}\cdot\omega(\partial_rV^{(\gamma,c)}+\partial_rH^{(\gamma,c)}\cdot\omega)\\
&&\quad-\partial_rH^{(\beta,b)}\cdot\omega^\perp \partial_rH^{(\gamma,c)}\cdot\omega^\perp.
\end{eqnarray*}
This completes the proof of the lemma.
\end{proof}

\subsection{Estimate of the Weighted $L^2$ Energy}
In the sequel, we will show that the weighted energy can be controlled by the generalized energy under the smallness  assumptions on lower order energies.
\begin{lem}\label{Estimate-N}
Suppose that $(V,H)\in H^{\kappa-1}_{\Gamma}$ solves \eqref{VisElas3} and \eqref{constr}.
Then for all $|\alpha|+|a|\leq \kappa-3$,
there holds
\begin{eqnarray*}
&&\big\|N_{|\alpha|+|a|+1}+\mathcal{N}_{|\alpha|+|a|+2} \big\|^2_{L^2} \nonumber\\
&&\lesssim
 E_{|\alpha|+|a|+2}E_{[(|\alpha|+|a|)/2]+4}
 +Y_{|\alpha|+|a|+1}E_{[(|\alpha|+|a|)/2]+3} \nonumber\\
&&\quad +E_{|\alpha|+|a|+2} (X_{[(|\alpha|+|a|)/2]+4}+Y_{[(|\alpha|+|a|)/2]+3}) .
\end{eqnarray*}
\end{lem}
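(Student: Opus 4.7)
The plan is to bound each term contributing to $\|N_{|\alpha|+|a|+1}\|_{L^2}^2$ and $\|\mathcal{N}_{|\alpha|+|a|+2}\|_{L^2}^2$ separately, by splitting every bilinear nonlinearity into a low-order factor (placed in $L^\infty$) and a high-order factor (placed in $L^2$), then invoking the Klainerman-Sobolev estimates of Lemma~\ref{31} together with the structural pointwise bounds of Lemma~\ref{Lem-Good-f}. Throughout, the spatial domain is cut along the light cone into $\{r\ge\langle t\rangle/2\}$ and $\{r\le\langle t\rangle/2\}$, and for each bilinear product the multi-indices $(\beta_1,b_1),(\beta_2,b_2)$ are arranged so that one factor has at most $[(|\alpha|+|a|)/2]+1$ generalized derivatives while the other carries the remaining order.

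For $\|tf^2_{\beta b}\|_{L^2}^2$, $\|(t+r)f^3_{\beta b}\|_{L^2}^2$, and $\|t\nabla\cdot f^2_{\beta b}\|_{L^2}^2$ with $|\beta|+|b|\le|\alpha|+|a|$, Lemma~\ref{Lem-Good-f} delivers a pointwise bound of the form $r^{-1}|U^{(\beta_1,b_1)}||U^{(\beta_2,b_2)}|$ with $U=V$ or $H$. Multiplying by the $t$ weight produces a factor $t/r$. On $\{r\ge\langle t\rangle/2\}$ this factor is uniformly bounded, and \eqref{K-S-1} applied to the lower-index factor gives $\|U^{(\beta_1,b_1)}\|_{L^\infty}^2\lesssim\langle t\rangle^{-1}E_{[(|\alpha|+|a|)/2]+3}$, while the other factor kept in $L^2$ is controlled by $E_{|\alpha|+|a|+2}^{1/2}$; this yields the $E_{|\alpha|+|a|+2}E_{[(|\alpha|+|a|)/2]+4}$ contribution. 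On $\{r\le\langle t\rangle/2\}$ one has $t\lesssim\langle t-r\rangle$, so \eqref{K-S-3} applied to the lower-index factor gives $\langle t\rangle\|U^{(\beta_1,b_1)}\|_{L^\infty}\lesssim X_{[(|\alpha|+|a|)/2]+4}^{1/2}$, producing the $E_{|\alpha|+|a|+2}X_{[(|\alpha|+|a|)/2]+4}$ contribution.

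For $\|tf^1_{\beta b}\|_{L^2}$, since $f^1_{\beta b}=R^\perp_i R_j f^{ij}_{\beta b}$ with $L^2$-bounded Riesz-type operators, it suffices to bound $\|tf^{ij}_{\beta b}\|_{L^2}$. The $r^{-1}$-type contributions arising from Lemma~\ref{Lem-Good-f} are handled exactly as above. For the remaining good-unknown contributions, schematically $|\partial_rV^{(\beta_1,b_1)}+\partial_rH^{(\beta_1,b_1)}\cdot\omega|\,(|\nabla V^{(\beta_2,b_2)}|+|\nabla H^{(\beta_2,b_2)}|)$ and $|\partial_rH^{(\beta_1,b_1)}\cdot\omega^\perp\,\partial_rH^{(\beta_2,b_2)}\cdot\omega^\perp|$, the $t$ weight combined with $t\sim r$ near the light cone and $t/r\lesssim 1$ off of it puts an $r$ weight on the good-unknown factor, matching the definition of $Y_\kappa$. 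Keeping that factor in $L^2$ with norm $Y_{|\alpha|+|a|+1}^{1/2}$ or $Y_{[(|\alpha|+|a|)/2]+3}^{1/2}$, and placing the other factor in $L^\infty$ via \eqref{K-S-1}--\eqref{K-S-3}, yields the $Y_{|\alpha|+|a|+1}E_{[(|\alpha|+|a|)/2]+3}$ and $E_{|\alpha|+|a|+2}Y_{[(|\alpha|+|a|)/2]+3}$ contributions.

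The main obstacle is coordinating the $t/r$ factor with the weighted norms: the $r^{-1}$ in Lemma~\ref{Lem-Good-f} must be interpreted through the implicit tangential-derivative structure $\partial_\theta/r=\omega^\perp\!\cdot\!\nabla$ near the origin, while near the light cone the identification $t\simeq r$ is precisely what aligns the time weight with the $r$ weight of $Y_\kappa$. Once the region decomposition is set up and the responsibilities of $E$, $X$, and $Y$ are assigned to the appropriate factors, the remaining work reduces to Cauchy-Schwarz and the standard low-high paradigm, and summation over $(\beta,b)$ produces the stated bound.
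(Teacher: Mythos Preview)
Your overall strategy matches the paper's: split along the light cone, invoke Lemma~\ref{Lem-Good-f} for the pointwise structure, reduce $f^1$ to $f^{ij}$ via Riesz boundedness, and in each region pair the time weight with the appropriate weighted norm $E$, $X$, or $Y$. The treatment of $f^2$, $f^3$, $\nabla\cdot f^2$, and the $r^{-1}$ part of $f^{ij}$ is correct and essentially identical to the paper's.

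There is, however, a genuine gap in your handling of the good-unknown contribution to $f^{ij}$ near the cone. Consider the term
\[
r\,\bigl|\partial_rV^{(\beta,b)}+\partial_rH^{(\beta,b)}\cdot\omega\bigr|\,\bigl|\nabla U^{(\gamma,c)}\bigr|
\]
when the good unknown carries the \emph{lower} order, i.e.\ $|\beta|+|b|<|\gamma|+|c|$. You propose to keep the good unknown in $L^2$ (contributing $Y_{[(|\alpha|+|a|)/2]+3}^{1/2}$) and place $\nabla U^{(\gamma,c)}$ in $L^\infty$ via \eqref{K-S-1}--\eqref{K-S-3}. But $|\gamma|+|c|$ can be as large as $|\alpha|+|a|$, so any $L^\infty$ bound on $\nabla U^{(\gamma,c)}$ costs at least two additional vector fields and overshoots the $E_{|\alpha|+|a|+2}$ allowed on the right-hand side. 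The paper instead reverses the roles: it keeps the high-index factor $\nabla U^{(\gamma,c)}$ in $L^2$ and proves a \emph{weighted $L^\infty$ bound for the good unknown},
\[
\bigl\|r(\partial_rV^{(\beta,b)}+\partial_rH^{(\beta,b)}\cdot\omega)\bigr\|_{L^\infty(r\ge\langle t\rangle/2)}^2
\lesssim Y_{|\beta|+|b|+3}+E_{|\beta|+|b|+2},
\]
obtained by applying \eqref{K-S-1} to $f=r(\partial_rV^{(\beta,b)}+\partial_rH^{(\beta,b)}\cdot\omega)$ and using that $\partial_\theta$ acting on this combination reproduces a good unknown at one higher $\widetilde\Omega$-order. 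This is exactly the step that produces the term $E_{|\alpha|+|a|+2}\,Y_{[(|\alpha|+|a|)/2]+3}$ in the statement, and it is the one nontrivial ingredient your sketch is missing.
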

\begin{proof}
In view  of the definition of
$N_{|\alpha|+|a|+1}$ and $\mathcal{N}_{|\alpha|+|a|+2}$, it suffices to prove
\begin{eqnarray*}
&&\big\|t|f^1_{\alpha a}|+t|f^2_{\alpha a}|+(t+r)|f_{\alpha a}^3|
+t|\nabla\cdot f_{\alpha a}^2| \big\|^2_{L^2} \nonumber\\
&&\lesssim
 E_{|\alpha|+|a|+2}E_{[(|\alpha|+|a|)/2]+4}
 +Y_{|\alpha|+|a|+1}E_{[(|\alpha|+|a|)/2]+3} \nonumber\\
&&\quad +E_{|\alpha|+|a|+2} (X_{[(|\alpha|+|a|)/2]+4}+Y_{[(|\alpha|+|a|)/2]+3}) .
\end{eqnarray*}
Let us first treat $\|t|f^2_{\alpha a}|+(t+r)|f^3_{\alpha a}|\|^2_{L^2}$.
Recall  that $f^2_{\alpha a}$ and $f^3_{\alpha a}$ were  defined in
\eqref{VisElas-Gamma-f123}.
We need to estimate the norm in different regions separately. When $r\leq \langle t \rangle/2$,
we have $\langle t\rangle\lesssim \langle t-r \rangle$, thus
\begin{align*}
&\big\| t|f^2_{\alpha a}|+(t+r)|f^3_{\alpha a}|\big\|^2_{L^2(r\leq \langle t \rangle/2)}\\
&\lesssim
\sum_{\tiny\begin{matrix}\beta+\gamma =\alpha\\ b+c=a\end{matrix}}
\big\| \langle t \rangle |\nabla U^{(\beta,b)}|
|\nabla U^{(\gamma,c)}|  \big\|^2_{L^2(r\leq \langle t \rangle/2)}.
\end{align*}
By the symmetry between the multi-index $b$ and $c$ and the symmetry between $\beta$ and $\gamma$ in the above,
we assume $|c|+|\gamma|\leq |b|+|\beta|$ without loss of generality. Thus $|\gamma|+|c|\leq[(\alpha+|a|)/2]$.
Hence thanks to \eqref{K-S-3},
the above can be further bounded by
\begin{align*}
&\sum_{\tiny\begin{matrix}\beta+\gamma =\alpha, b+c=a\\
           |\gamma|+|c|\leq[(\alpha+|a|)/2]\end{matrix}}
\|\nabla U^{(\beta,b)}\|^2_{L^2}
\| \langle t \rangle\nabla U^{(\gamma,c)}\|^2_{L^\infty(r\leq \langle t \rangle/2)}\\
&\lesssim E_{|\alpha|+|a|+1}X_{[(|\alpha|+|a|)/2]+3} .
\end{align*}
For $r\geq \langle t \rangle/2$, one infers by \eqref{good-f2}, \eqref{good-f3} and Sobolev embedding that
\begin{align*}
&\big\| t|f^2_{\alpha a}|+(t+r)|f^3_{\alpha a}|\big\|^2_{L^2(r\geq \langle t \rangle/2)}\\
&\lesssim
\sum\limits_{\tiny{\begin{matrix}|\beta|+|\gamma|\leq |\alpha|
\\ |b|+|c|\leq |a| \end{matrix}}}
\big\| |U^{(|\beta|,|b|+1)}| |U^{(|\gamma|,|c|+1)}|
 \big\|^2_{L^2(r\geq \langle t \rangle/2)} \\
&\lesssim E_{|\alpha|+|a|+1}E_{[(|\alpha|+|a|)/2]+3}.
\end{align*}
Then we turn our attention to $\|tf^1_{\alpha a}\|_{L^2}$.
Recalling that $f^1_{\alpha a}$ is defined in \eqref{VisElas-Gamma-f123}.
By the $L^2$ boundness of the Riesz transform, one has
\begin{equation*}
\|tf_{\alpha a}^1\|_{L^2} \lesssim \sum_{1\leq i,j\leq 2}\|tf_{\alpha a}^{ij}\|_{L^2},
\end{equation*}
where $f_{\alpha a}^{ij}$ is defined in Lemma \ref{Lem-Good-f}. Hence in the following, we focus our attention on $\|tf^{ij}_{\alpha a}\|_{L^2}$.
When $r\leq \langle t \rangle/2$, we can estimate similar to
$\|tf^2_{\alpha a}\|_{L^2(r\leq \langle t \rangle/2)}$
to deduce that
\begin{equation*}
\|t f^{ij}_{\alpha a}\|^2_{L^2(r\leq \langle t \rangle/2)}
\lesssim E_{|\alpha|+|a|+1}X_{[(|\alpha|+|a|)/2]+3}.
\end{equation*}
When $r\geq \langle t \rangle/2$, by \eqref{good-f1},
one has
\begin{align} \label{f_ij_r}
\|t f^{ij}_{\alpha a}\|^2_{L^2(r\geq \langle t \rangle/2)}
\nonumber
&\lesssim
\sum\limits_{\tiny\begin{matrix}|b|+|c|\leq |a|\\ |\beta|+|\gamma|\leq|\alpha|\end{matrix}}
\big\||V^{(|\beta|,|b|+1)}| |V^{(|\gamma|,|c|+1)}|
+|H^{(|\beta|,|b|+1)}| |H^{(|\gamma|,|c|+1)}|
\big\|_{L^2}^2\nonumber\\
&\quad+\sum\limits_{\tiny\begin{matrix}b+c=a\\ \beta+\gamma=\alpha\end{matrix}}
\big\|r|(\partial_rV^{(\beta,b)}
+\partial_rH^{(\beta,b)}\cdot\omega)|
\big(|\nabla V^{(\gamma,c)}|
+|\nabla H^{(\gamma,c)}|\big)\big\|_{L^2}^2\nonumber\\
&\quad+\sum\limits_{\tiny\begin{matrix}b+c=a\\ \beta+\gamma=\alpha\end{matrix}}
\big\|r\partial_rH^{(\beta,b)}\cdot\omega^\perp \partial_rH^{(\gamma,c)}\cdot\omega^\perp\big\|^2_{L^2}.
\end{align}
For the first  and third terms  on the right-hand side of  \eqref{f_ij_r},
one can use traditional Sobolev inequality to deduce that
they are bounded by
$$E_{|\alpha|+|a|+1} E_{[(|\alpha|+|a|)/2]+3}+Y_{|\alpha|+|a|+1}E_{[(|\alpha|+|a|)/2]+3}.$$
The remaining second terms  of \eqref{f_ij_r} needs further work.
Making use of the fact that
\begin{align*}
\partial_\theta(V^{(\alpha,a)}+H^{(\alpha,a)}\cdot\omega)
&=\widetilde{\Omega}V^{(\alpha,a)}
+\widetilde{\Omega}H^{(\alpha,a)}\cdot\omega\\
&=\widetilde{S}^\alpha\widetilde{\Omega}\Gamma^aV
+\widetilde{S}^\alpha\widetilde{\Omega}\Gamma^aH\cdot\omega
\end{align*}
and by \eqref{K-S-1}, one gets
\begin{align*}
&\|r(\partial_rV^{(\gamma,c)}+\partial_rH^{(\gamma,c)}\cdot\omega) \|^2_{L^\infty(r\geq \langle t \rangle/2)}\\[-4mm]\\
&\lesssim \sum_{d=0,1}\big\{
\|\partial_r \Omega^d \Large[ r(\partial_rV^{(\gamma,c)}
+\partial_rH^{(\gamma,c)}\cdot\omega)\Large]\|_{L^2}^2
+\|\Omega^d \Large[r(\partial_rV^{(\gamma,c)}
+\partial_rH^{(\gamma,c)}\cdot\omega) \Large]\|_{L^2}^2\big\}\\
&\lesssim Y_{|\gamma|+|c|+3}
+E_{|\gamma|+|c|+2}.
\end{align*}
This allows us to  control the second  line of \eqref{f_ij_r} as follows:
\begin{eqnarray*}
&&\sum\limits_{\tiny\begin{matrix}b+c=a\\ \beta+\gamma=\alpha\end{matrix}}
\big\|r(\partial_rV^{(\beta,b)}
+\partial_rH^{(\beta,b)}\cdot\omega)
\Large(|\nabla V^{(\gamma,c)}|
+|\nabla H^{(\gamma,c)}|\Large)\big\|^2_{L^2(r\geq \langle t \rangle/2)}\\
&&\lesssim\sum_{\tiny\begin{matrix}\beta+\gamma =\alpha, b+c=a\\ |\beta|+|b|\geq |\gamma|+|c|\end{matrix}}
\|r(\partial_rV^{(\beta,b)}
 +\partial_rH^{(\beta,b)}\cdot\omega) \|^2_{L^2}
\|\nabla U^{(\gamma,c)}\|^2_{L^\infty} \\
&&+\sum_{\tiny\begin{matrix}\beta+\gamma =\alpha, b+c=a\\ |\beta|+|b|< |\gamma|+|c|\end{matrix}}
\|\nabla U^{(\gamma,c)}\|^2_{L^2}
\|r(\partial_rV^{(\beta,b)}
+\partial_rH^{(\beta,b)}\cdot\omega) \|^2_{L^\infty(r\geq \langle t \rangle/2)} \\
[-4mm]\\
&&\lesssim E_{|\alpha|+|a|+1} E_{[(|\alpha|+|a|)/2]+3}
+Y_{|\alpha|+|a|+1}E_{[(|\alpha|+|a|)/2]+3}
+E_{|\alpha|+|a|+1}Y_{[(|\alpha|+|a|)/2]+3}.
\end{eqnarray*}
Finally, we are going to show that
$$\|t\nabla\cdot f_{\alpha a}^2\|^2_{L^2}
\lesssim
E_{|\alpha|+|a|+2} (E_{[(|\alpha|+|a|)/2]+4}+X_{[(|\alpha|+|a|)/2]+4}).
$$
For $r\leq \langle t \rangle/2$, by \eqref{K-S-3}, we have
\begin{align*}
\|t\nabla\cdot f_{\alpha a}^2\|^2_{L^2(r\leq \langle t \rangle/2)}
&\lesssim
\sum\limits_{\tiny\begin{matrix}b+c=a\\ \beta+\gamma=\alpha\end{matrix}}
\big\|\langle t-r\rangle
|\nabla^2 U^{(\beta,b)}| |\nabla U^{(\gamma,c)}|\big\|^2_{L^2(r\leq \langle t \rangle/2)} \\
&\lesssim E_{|\alpha|+|a|+2} X_{[(|\alpha|+|a|)/2]+4}.
\end{align*}
For $r\geq \langle t \rangle/2$, one deduces by \eqref{good-f2d}  that:
\begin{align*}
&\|t\nabla\cdot f_{\alpha a}^2\|^2_{L^2(r\geq \langle t \rangle/2)}
\lesssim \|r\nabla\cdot f_{\alpha a}^2\|^2_{L^2(r\geq \langle t \rangle/2)} \\[-4mm]\\
&\lesssim \sum\limits_{\tiny\begin{matrix}|b|+|c|\leq |a|\\ |\beta|+|\gamma|\leq|\alpha|\end{matrix}}
\big\| | V^{(|\beta|,|b|+2)}| |H^{(|\gamma|,|c|+2)}| \big\|_{L^2(r\geq \langle t \rangle/2)} \\
&\lesssim  E_{|\alpha|+|a|+2} E_{[(|\alpha|+|a|)/2]+4}.
\end{align*}
This finishes the proof of the lemma.
\end{proof}

Now, we state a lemma that allows us to estimate the weighted $L^2$ norms:
\begin{lem}\label{Estimate-Nl}
Suppose that $(V,H)\in H^{\kappa-1}_{\Gamma}$ solves \eqref{VisElas3} and \eqref{constr} with $\kappa \geq12$. Then there hold
\begin{equation}\label{W1}
X_{\kappa-4}+Y_{\kappa-4}\lesssim
E_{\kappa-3}+Y_{\kappa-4}E_{\kappa-3}+E_{\kappa-3}X_{\kappa-4}+E_{\kappa-3}E_{\kappa-4},
\end{equation}
and
\begin{equation}\label{W2}
X_{\kappa-2}+Y_{\kappa-2}\lesssim E_{\kappa-1}+E_{\kappa-1}X_{\kappa-4}
+Y_{\kappa-2}E_{\kappa-4}+E_{\kappa-1}Y_{\kappa-4}+E_{\kappa-1}E_{\kappa-4}.
\end{equation}
\end{lem}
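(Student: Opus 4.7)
The plan is to pass from the pointwise identities of Lemma \ref{Pointwise-Estimate} to $L^2$ bounds, using Lemma \ref{lemLinear} to control the ``bad'' combination $r\partial_r V^{(\alpha,a)} + t\nabla\cdot H^{(\alpha,a)}$ and Lemma \ref{Estimate-N} to dispose of the nonlinear pieces $N_{|\alpha|+|a|+1}$ and $\mathcal{N}_{|\alpha|+|a|+2}$. The algebraic identities \eqref{LL2}--\eqref{LL5} express each building block of $X_\kappa$ and $Y_\kappa$ in the schematic form $L + N + \text{(good linear quantity)}$, so taking $L^2$ norms immediately yields the desired estimates modulo bookkeeping of derivative indices.

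First I would treat $Y_\kappa$: the pieces $r(\partial_r V^{(\alpha,a)}+\partial_r H^{(\alpha,a)}\cdot\omega)$ and $r\partial_r H^{(\alpha,a)}\cdot\omega^\perp$ are handled directly by \eqref{LL5} and \eqref{LL4} in terms of $L_{|\alpha|+|a|+1}$, $N_{|\alpha|+|a|+1}$ and $r\partial_r V^{(\alpha,a)}+t\nabla\cdot H^{(\alpha,a)}$. For $X_\kappa$ I would use the polar decomposition \eqref{der-decomp}: the angular part $(\omega^\perp/r)\,\partial_\theta U^{(\alpha,a)}$ is harmless since $\langle t-r\rangle/r\lesssim 1$ in the exterior region $r\ge\langle t\rangle/2$ and $\langle t-r\rangle \lesssim \langle t\rangle$ in the interior (combined with \eqref{K-S-3} if needed), so it costs only one rotation contained in $E_{|\alpha|+|a|+1}$; the radial $V$-part is controlled by \eqref{LL3}, and the radial $H$-part by decomposing it along $\omega$ and $\omega^\perp$ and invoking \eqref{LL5} and \eqref{LL4}. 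Squaring and integrating, and then using Lemma \ref{lemLinear} to convert the linear ``bad'' quantity into $E$-type norms plus viscous derivative-loss terms, and Lemma \ref{Estimate-N} to bound $\|N_{|\alpha|+|a|+1}\|_{L^2}^2+\mu^2\|\mathcal{N}_{|\alpha|+|a|+2}\|_{L^2}^2$, yields a master inequality of the shape
\begin{equation*}
X_{|\alpha|+|a|+1}+Y_{|\alpha|+|a|+1}\lesssim \nu X_{|\alpha|+|a|+1}+E_{|\alpha|+|a|+2}+\text{(nonlinear products)},
\end{equation*}
with the $\nu X$ term absorbed on the left.

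For \eqref{W1} I would restrict to $|\alpha|+|a|\leq \kappa-5$, so the linear piece from Lemma \ref{lemLinear} is bounded by $E_{\kappa-3}$. Since $[(\kappa-5)/2]+4\leq \kappa-4$ when $\kappa\geq 12$, Lemma \ref{Estimate-N} places the nonlinear products at the levels $E_{\kappa-3}E_{\kappa-4}$, $Y_{\kappa-4}E_{\kappa-3}$, and $E_{\kappa-3}(X_{\kappa-4}+Y_{\kappa-4})$, matching the stated RHS. For \eqref{W2} I would take $|\alpha|+|a|\leq \kappa-3$; then the linear cost is $E_{\kappa-1}$, and because $[(\kappa-3)/2]+4\leq \kappa-4$ when $\kappa\geq 12$, the nonlinear factors from Lemma \ref{Estimate-N} land at $E_{\kappa-4}$, $X_{\kappa-4}$, $Y_{\kappa-4}$, producing exactly $E_{\kappa-1}E_{\kappa-4}+E_{\kappa-1}X_{\kappa-4}+E_{\kappa-1}Y_{\kappa-4}+Y_{\kappa-2}E_{\kappa-4}$. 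The last summand arises because the symmetric Sobolev split sometimes places the ``large'' index on the $Y$ side at level $\kappa-2$; since $E_{\kappa-4}\ll 1$ it can later be absorbed by the left-hand side.

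The main obstacle is the derivative loss intrinsic to Lemma \ref{lemLinear}, produced by the commutator between the viscous Laplacian and the scaling operator: each application of that lemma costs one extra spatial derivative in its $\mu^2$ terms, which is precisely why \eqref{W2} carries $E_{\kappa-1}$ rather than $E_{\kappa-2}$ on its right-hand side and why the statement is split into the two levels $\kappa-4$ and $\kappa-2$ instead of a single cleaner estimate. The delicate step is the bookkeeping: for every nonlinear product one must decide, via Sobolev embedding \eqref{K-S-3} and the pointwise bounds in Lemma \ref{Pointwise-Estimate}, which factor carries the higher derivative count, and verify that the Sobolev half-index $[(|\alpha|+|a|)/2]+4$ never exceeds $\kappa-4$. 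The hypothesis $\kappa\geq 12$ is exactly the threshold that makes this counting closes.
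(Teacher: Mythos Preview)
Your overall architecture is right and matches the paper: reduce to the master inequality
\[
X_{|\alpha|+|a|+1}+Y_{|\alpha|+|a|+1}\lesssim \nu X_{|\alpha|+|a|+1}+E_{|\alpha|+|a|+2}+\|N_{|\alpha|+|a|+1}\|_{L^2}^2+\|\mathcal N_{|\alpha|+|a|+2}\|_{L^2}^2,
\]
absorb $\nu X$, invoke Lemma~\ref{Estimate-N}, and then specialize the indices. The treatment of $Y_\kappa$ via \eqref{LL4}--\eqref{LL5} is correct, and the derivative bookkeeping ($[(\kappa-5)/2]+4\le\kappa-4$, $[(\kappa-3)/2]+4\le\kappa-4$) is the paper's counting exactly.

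The gap is in your treatment of $X_\kappa$. The polar decomposition $\nabla U=\omega\partial_r U+\frac{\omega^\perp}{r}\partial_\theta U$ does \emph{not} make the angular piece ``harmless'' in the interior region $r\le\langle t\rangle/2$: there $\langle t-r\rangle/r\sim\langle t\rangle/r$ is unbounded, and the only pointwise control on $\frac{1}{r}\partial_\theta U=\omega^\perp\!\cdot\!\nabla U$ is $|\nabla U|$ itself, which is circular. Invoking \eqref{K-S-3} does not help, since that inequality bounds $L^\infty$ by $X$--type norms, the wrong direction. The same circularity appears in your radial $H$-part: \eqref{LL4} gives $r|\partial_r H^{(\alpha,a)}\!\cdot\!\omega^\perp|\lesssim L+N$, but converting an $r$-weight into a $\langle t-r\rangle$-weight fails in the interior.

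The paper closes this by two devices you do not use. For $\nabla V^{(\alpha,a)}$ and $\nabla\cdot H^{(\alpha,a)}$ it applies \eqref{LL3} to the \emph{full} gradient (not just the radial part), writing $\nabla V=\tfrac12[(\nabla V+\nabla\!\cdot\!H\,\omega)+(\nabla V-\nabla\!\cdot\!H\,\omega)]$ and using $\langle t-r\rangle\le\langle t+r\rangle$; this is a global pointwise bound. For the full matrix $\nabla H^{(\alpha,a)}$ it proves and uses an $L^2$ div--curl estimate (Lemma~\ref{grad-div-curl}),
\[
\|\langle t-r\rangle\nabla H^{(\alpha,a)}\|_{L^2}\lesssim\|\langle t-r\rangle\nabla\!\cdot\!H^{(\alpha,a)}\|_{L^2}+\|\langle t-r\rangle\nabla^\perp\!\cdot\!H^{(\alpha,a)}\|_{L^2}+\|H^{(\alpha,a)}\|_{L^2},
\]
and then controls the curl through the constraint \eqref{constr-Gamma}, $\nabla^\perp\!\cdot\!H^{(\alpha,a)}=f^3_{\alpha a}$, whose $(t+r)$-weighted $L^2$ norm is already inside $N_{|\alpha|+|a|+1}$. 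This integration-by-parts identity is what replaces your attempted pointwise polar argument; once you insert it, the rest of your outline goes through unchanged.
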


\begin{proof}
For the proof of this lemma, we recall and prove the following simple lemma :

\begin{lem}
\label{grad-div-curl}
For vector $K$, there holds
\[
\|\langle t-r\rangle \nabla K\|_{L^2}\lesssim  \|\langle t-r\rangle\nabla\cdot K\|_{L^2}
+\|\langle t-r\rangle\nabla^\perp\cdot K\|_{L^2}  +\|K\|_{L^2},
\]
provided the right-hand side is finite.
\end{lem}
\begin{proof}
The proof is rather simple and the version for matrix has appeared in \cite{LSZ13}. For completeness we include the proof for vector $K$. It suffices to prove the lemma for $K\in C_0^2(\mathbb{R}^2)$, the general case can  be established
by a completion procedure.

For any vector $K$, we write
\[
|\nabla K|^2=|\nabla\cdot K|^2+|\nabla^\perp\cdot K|^2
-2\partial_1 K_1\partial_2K_2+\partial_2K_1\partial_1K_2.
\]
By integration by parts and Young's inequality, we have
\begin{eqnarray*}
&&\|\langle t-r\rangle\nabla K\|_{L^2}^2-
\|\langle t-r\rangle\nabla\cdot  K\|_{L^2}^2-\|(t-r)\nabla^\perp\cdot K\|_{L^2}^2\\
&&=\int_{\mathbb{R}^2} 2\langle t-r\rangle^2 [-\partial_1(K_1\partial_2K_2)+\partial_2(K_1\partial_1K_2)]dx\\
&&=\int_{\mathbb{R}^2}4(t-r)[-\omega_1K_1\partial_2K_2+\omega_2K_1\partial_1K_2]dx\\
&&\le\frac12\|\langle t-r\rangle\nabla K\|_{L^2}^2+C\|K\|_{L^2}^2.
\end{eqnarray*}
The lemma then follows from the fact that the first term of the right-hand side can be absorbed by the left-hand side.
\end{proof}

 We go back to the proof of  Lemma \ref{Estimate-Nl} and we first show that
\begin{align}\label{F1}
&X_{|\alpha|+|a|+1}+Y_{|\alpha|+|a|+1}\nonumber\\[-4mm]\nonumber\\
&\lesssim E_{|\alpha|+|a|+2}+E_{|\alpha|+|a|+2} (X_{[(|\alpha|+|a|)/2]+4}+Y_{[(|\alpha|+|a|)/2]+3})\nonumber\\[-4mm]\nonumber\\
&+(X_{|\alpha|+|a|+1}+Y_{|\alpha|+|a|+1})E_{[(|\alpha|+|a|)/2]+3}
+E_{|\alpha|+|a|+2}E_{[(|\alpha|+|a|)/2]+4}.
\end{align}
Actually, by Lemma \ref{Estimate-N}, we only need to show
\begin{eqnarray} \label{F2}
X_{|\alpha|+|a|+1}+Y_{|\alpha|+|a|+1}
\lesssim
E_{|\alpha|+|a|+2}+\|N_{|\alpha|+|a|+1}\|^2_{L^2}
+\|\mathcal{N}_{|\alpha|+|a|+2}\|^2_{L^2}.
\end{eqnarray}
In view of the fact that
\begin{eqnarray*}
&&\nabla V^{(\alpha,a)}
=\frac12[\nabla V^{(\alpha,a)}
+(\nabla\cdot H^{(\alpha,a)})\omega] +
\frac12[\nabla V^{(\alpha,a)}
-(\nabla\cdot H^{(\alpha,a)})\omega], \\
&&(\nabla\cdot H^{(\alpha,a)})\omega
=\frac12[\nabla V^{(\alpha,a)}
+(\nabla\cdot H^{(\alpha,a)})\omega] -
\frac12[\nabla V^{(\alpha,a)}
-(\nabla\cdot H^{(\alpha,a)})\omega],
\end{eqnarray*}
we deduce that
\begin{align*}
&\langle t-r\rangle(|\nabla V^{(\alpha,a)}|
 +|\nabla\cdot H^{(\alpha,a)}|) \nonumber\\[-4mm]\nonumber\\
&\lesssim \langle t+r\rangle
|\nabla V^{(\alpha,a)}+ \nabla\cdot H^{(\alpha,a)}\omega|
+|\langle t-r\rangle(\nabla V^{(\alpha,a)}
 - \nabla\cdot H^{(\alpha,a)}\omega)|.
\end{align*}
By \eqref{LL3}, the above can be further bounded by
\begin{align*}
L_{|\alpha|+|a|+1}+N_{|\alpha|+|a|+1}+
|r\partial_r V^{(\alpha,a)}+t\nabla\cdot H^{(\alpha,a)}|.
\end{align*}
Hence by Lemma \ref{lemLinear} and Lemma \ref{grad-div-curl},
we have
\begin{eqnarray*}
&&\|\langle t-r\rangle \nabla V^{(\alpha,a)}\|^2_{L^2}
+\|\langle t-r\rangle \nabla H^{(\alpha,a)}\|^2_{L^2}\nonumber\\[-4mm]\nonumber\\
&&\lesssim
\| \langle t-r\rangle \nabla V^{(\alpha,a)}\|^2_{L^2}
+\|\langle t-r\rangle \nabla\cdot H^{(\alpha,a)}\|^2_{L^2} \nonumber\\[-4mm]\nonumber\\
&&\quad+\|\langle t-r\rangle \nabla^\perp\cdot H^{(\alpha,a)}\|^2_{L^2}
+\|H^{(\alpha,a)}\|^2_{L^2}              \\
&&\lesssim \nu X_{|\alpha|+|a|+1}+\|L_{|\alpha|+|a|+2}\|^2_{L^2}
+C_\nu \|N_{|\alpha|+|a|+1}\|^2_{L^2}
+C_\nu \|\mathcal{N}_{|\alpha|+|a|+2}\|^2_{L^2},
\end{eqnarray*}
for any positive $\nu$. This further implies that
\begin{eqnarray*}
X_{|\alpha|+|a|+1}
\lesssim \nu X_{|\alpha|+|a|+1}+\|L_{|\alpha|+|a|+2}\|^2_{L^2}
+C_\nu\|N_{|\alpha|+|a|+1}\|^2_{L^2}
+C_\nu\|\mathcal{N}_{|\alpha|+|a|+2}\|^2_{L^2}.
\end{eqnarray*}
Taking  $\nu \geq 0 $ small enough, the first term on the right hand side in the above is absorbed
by the left hand side. This yields
\begin{eqnarray} \label{F3}
X_{|\alpha|+|a|+1}
\lesssim \|L_{|\alpha|+|a|+2}\|^2_{L^2}
+\|N_{|\alpha|+|a|+1}\|^2_{L^2}
+\|\mathcal{N}_{|\alpha|+|a|+2}\|^2_{L^2}.
\end{eqnarray}
The estimate for $Y_{|\alpha|+|a|+1}$ in \eqref{F2}
is obvious from \eqref{LL4}, \eqref{LL5},
 \eqref{F3} and Lemma \ref{lemLinear}. Thus \eqref{F2} is proved.

Now we turn to the proof of the first inequality in the lemma:  Let $\kappa \geq 12$, $|\alpha|+|a|+1 \leq \kappa - 4$,
one has $[(|\alpha|+|a|)/2] +4  \leq \kappa -4$. Hence, by \eqref{F1}, we have
\begin{equation*}
X_{\kappa-4}+Y_{\kappa-4}\lesssim E_{\kappa-3}
+Y_{\kappa-4}E_{\kappa-3}+E_{\kappa-3}X_{\kappa-4}+E_{\kappa-3}E_{\kappa-4}.
\end{equation*}
Next, for $|\alpha|+|a|+1 \leq \kappa-2 $, there holds $[(|a|+|\alpha|)/2] + 4
\leq \kappa - 4$. Hence one can derive from \eqref{F1} that
\begin{equation*}
X_{\kappa-2}+Y_{\kappa-2}\lesssim E_{\kappa-1}+E_{\kappa-1}X_{\kappa-4}
+Y_{\kappa-2}E_{\kappa-4}+E_{\kappa-1}Y_{\kappa-4}+E_{\kappa-1}E_{\kappa-4}.
\end{equation*}
\end{proof}

The following lemma gives the control of weighted  generalized energies and weighted good quantity energies in terms of Klainerman's generalized ones. Note that we have one derivative loss with respect to similar estimates in \cite{Lei16,LSZ13,Sideris00}.
\begin{lem}\label{lemmaWeighted-Energy}
Suppose that $(V,H)\in H^{\kappa-1}_{\Gamma}$ solves \eqref{VisElas3} and \eqref{constr} with $\kappa\geq 12$,
and suppose $E_{\kappa-3}\ll1$. Then,  we have
\begin{equation*}
X_{\kappa-4}+Y_{\kappa-4}\lesssim E_{\kappa-3},\quad
X_{\kappa-2}+Y_{\kappa-2}\lesssim E_{\kappa-1}.
\end{equation*}
\end{lem}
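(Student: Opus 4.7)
The plan is to derive this lemma as a direct corollary of Lemma 3.7, using the smallness assumption $E_{\kappa-3}\ll 1$ to run standard absorption arguments. The two estimates of Lemma 3.7 are already in a form where the quantities to be bounded appear on the right-hand side multiplied by energies that are small, so the main work is bookkeeping of indices and absorbing the relevant terms into the left-hand side.

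First I would prove the estimate $X_{\kappa-4}+Y_{\kappa-4}\lesssim E_{\kappa-3}$. Starting from \eqref{W1}, namely
\[
X_{\kappa-4}+Y_{\kappa-4}\lesssim E_{\kappa-3}+Y_{\kappa-4}E_{\kappa-3}+E_{\kappa-3}X_{\kappa-4}+E_{\kappa-3}E_{\kappa-4},
\]
I observe that $E_{\kappa-4}\leq E_{\kappa-3}\ll 1$, so $E_{\kappa-3}E_{\kappa-4}\lesssim E_{\kappa-3}$. Moreover the terms $Y_{\kappa-4}E_{\kappa-3}$ and $E_{\kappa-3}X_{\kappa-4}$ can be written as $E_{\kappa-3}(X_{\kappa-4}+Y_{\kappa-4})$; since $E_{\kappa-3}$ is sufficiently small, this quantity can be absorbed into the left-hand side, yielding the desired bound.

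Next I would prove $X_{\kappa-2}+Y_{\kappa-2}\lesssim E_{\kappa-1}$. Starting from \eqref{W2},
\[
X_{\kappa-2}+Y_{\kappa-2}\lesssim E_{\kappa-1}+E_{\kappa-1}X_{\kappa-4}+Y_{\kappa-2}E_{\kappa-4}+E_{\kappa-1}Y_{\kappa-4}+E_{\kappa-1}E_{\kappa-4},
\]
I would plug in the estimate just obtained in the previous step to handle $X_{\kappa-4}$ and $Y_{\kappa-4}$: each is bounded by $E_{\kappa-3}\lesssim 1$, so $E_{\kappa-1}X_{\kappa-4}+E_{\kappa-1}Y_{\kappa-4}\lesssim E_{\kappa-1}E_{\kappa-3}\lesssim E_{\kappa-1}$. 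Similarly $E_{\kappa-1}E_{\kappa-4}\lesssim E_{\kappa-1}$. Finally the term $Y_{\kappa-2}E_{\kappa-4}$ is handled by absorbing it into the left-hand side, using the smallness of $E_{\kappa-4}\leq E_{\kappa-3}$.

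There is no substantive obstacle here; the main technical content is already contained in Lemma 3.7 (and ultimately in Lemmas 3.3--3.6 that feed into it). The present lemma is purely an absorption step, and the only care needed is to ensure that the index ranges $\kappa\geq 12$ provide enough separation between the various orders so that the right-hand sides of \eqref{W1} and \eqref{W2} involve only quantities that are either controlled by $E_{\kappa-3}$ (small) or by $E_{\kappa-1}$ (the target).
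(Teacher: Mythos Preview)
Your proposal is correct and follows essentially the same approach as the paper: the paper's proof simply states that the first estimate follows from \eqref{W1} together with $E_{\kappa-3}\ll 1$, and the second follows from \eqref{W2}, the smallness assumption, and the first estimate already obtained. Your write-up merely spells out the absorption argument that the paper leaves implicit.
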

\begin{remark}
When $\mu=0$, we can modify Lemma \ref{Estimate-N} and Lemma \ref{Estimate-Nl}
and finally get a non-derivative-loss version of Lemma \ref{lemmaWeighted-Energy}:
\begin{equation*}
X_{\kappa-3}+Y_{\kappa-3}\lesssim E_{\kappa-3},\quad
X_{\kappa-1}+Y_{\kappa-1}\lesssim E_{\kappa-1}.
\end{equation*}
\end{remark}
\begin{proof}
The first estimate follows from \eqref{W1} and the assumption $E_{\kappa-3}\ll1$. The second one follows from \eqref{W2}, the assumption
and the obtained first estimate.
\end{proof}

\subsection{Strengthened $L^\infty$ estimate for the Good Unknowns}
We now complete the decay estimate for the $L^\infty$ of the good unknowns $\partial V+\partial H\cdot\omega$
and $\partial H\cdot\omega^\perp$ near the light cone.
\begin{lem}\label{LemmaGood-L-infy}
Suppose that $(V,H)\in H^{\kappa-1}_{\Gamma}$ solves \eqref{VisElas3} and \eqref{constr} with $\kappa\geq 12$ and suppose $E_{\kappa-3}\ll1$. Then for all $|\alpha|+|a|\leq \kappa-7$ and  for  $ i  = 1,  2$,  we have
\begin{equation} \label{Good-L-infy-1}
\langle t \rangle^{\frac 32}
\big\||\nabla_i V^{(\alpha,a)}+\nabla_iH^{(\alpha,a)}\cdot\omega |+
|\nabla_iH^{(\alpha,a)}\cdot\omega^\perp| \big\|_{L^\infty( r\geq\langle t \rangle/2)}
\lesssim E_{\kappa-3}^{\frac12}.
\end{equation}
\end{lem}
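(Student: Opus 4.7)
The plan is to apply the weighted Sobolev inequality \eqref{K-S-1} to quantities of the form $f = r\cdot(\text{good unknown})$. Since that inequality reads $r|f|^2 \lesssim \sum_{a=0,1}(\|\partial_r\Omega^a f\|_{L^2}^2 + \|\Omega^a f\|_{L^2}^2)$, this choice produces an effective $r^3$ weight on the pointwise quantity, and in the region $r\geq \langle t\rangle/2$ one has $r^{3/2}\gtrsim \langle t\rangle^{3/2}$, yielding exactly the desired $\langle t\rangle^{-3/2}$ decay once the right-hand side is bounded by previously established weighted energies.

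First I would exploit the decomposition \eqref{der-decomp} to split
\[
\nabla_iV^{(\alpha,a)}+\nabla_iH^{(\alpha,a)}\cdot\omega
= \omega_i\bigl(\partial_rV^{(\alpha,a)}+\partial_rH^{(\alpha,a)}\cdot\omega\bigr)
+\frac{\omega_i^\perp}{r}\bigl(\partial_\theta V^{(\alpha,a)}+\partial_\theta H^{(\alpha,a)}\cdot\omega\bigr),
\]
and analogously for $\nabla_iH^{(\alpha,a)}\cdot\omega^\perp$. The radial term is precisely the quantity controlled by $Y_\kappa$, while the angular term carries an extra $1/r$ that pairs with the $r^{-3/2}$ coming from Sobolev. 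For the radial piece I would apply \eqref{K-S-1} to $f_1 = r(\partial_rV^{(\alpha,a)}+\partial_rH^{(\alpha,a)}\cdot\omega)$; using $\Omega r = 0$, $[\partial_r,\Omega]=0$, $\Omega\omega=\omega^\perp$, and the identity $\Omega H = \widetilde\Omega H + H^\perp$, each of the derivatives $\partial_r^j\Omega^i f_1$ with $i+j\le 2$ can be rewritten as a sum of quantities of the form $r(\partial_r V^{(\alpha',a')}+\partial_r H^{(\alpha',a')}\cdot\omega)$ and $r\partial_r H^{(\alpha',a')}\cdot\omega^\perp$, plus lower-order pieces $U^{(\alpha',a')}$, all with $|\alpha'|+|a'|\le |\alpha|+|a|+1$. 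The Sobolev right-hand side is therefore dominated by $Y_{|\alpha|+|a|+2}+E_{|\alpha|+|a|+2}$.

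For the angular piece, observe that $\partial_\theta V^{(\alpha,a)}+\partial_\theta H^{(\alpha,a)}\cdot\omega = \widetilde\Omega V^{(\alpha,a)} + \widetilde\Omega H^{(\alpha,a)}\cdot\omega - H^{(\alpha,a)}\cdot\omega^\perp$ is a linear combination of $U^{(\alpha,a')}$ with $|a'|\le |a|+1$; applying \eqref{K-S-1} to this quantity yields an $r|\cdot|^2\lesssim E_{|\alpha|+|a|+2}$ bound, and then the extra factor $1/r$ gives $r^{-1}|\cdot|\lesssim r^{-3/2}E_{|\alpha|+|a|+2}^{1/2}\lesssim \langle t\rangle^{-3/2}E_{|\alpha|+|a|+2}^{1/2}$ on $\{r\ge\langle t\rangle/2\}$. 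The term $\nabla_iH^{(\alpha,a)}\cdot\omega^\perp$ is handled identically, using the $Y$-good unknown $r\partial_r H\cdot\omega^\perp$ and the identity $\partial_\theta H\cdot\omega^\perp = \widetilde\Omega H\cdot\omega^\perp + H\cdot\omega$.

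Finally, since $|\alpha|+|a|\le \kappa-7$ implies $|\alpha|+|a|+2\le\kappa-5\le\kappa-4$, Lemma \ref{lemmaWeighted-Energy} together with the smallness assumption $E_{\kappa-3}\ll 1$ gives $Y_{\kappa-4}+E_{\kappa-4}\lesssim E_{\kappa-3}$, which closes the estimate. The main subtlety to keep in mind is the one-derivative loss inherent in $Y\lesssim E$, caused by the viscosity commutators analyzed in Lemma \ref{lemLinear}; this is precisely why the hypothesis only allows $|\alpha|+|a|\le\kappa-7$ rather than something closer to $\kappa-3$, since the generous buffer of derivatives must absorb both the two derivatives consumed by the Sobolev inequality and the viscous loss in passing from $Y$ to $E$.
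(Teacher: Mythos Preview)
Your approach is essentially the same as the paper's: reduce via \eqref{der-decomp} to the radial good unknowns, apply \eqref{K-S-1} to $f_1 = r(\partial_r V^{(\alpha,a)}+\partial_r H^{(\alpha,a)}\cdot\omega)$, and close with Lemma~\ref{lemmaWeighted-Energy}. There is, however, a small index miscount in your radial step. When you hit $f_1$ with $\partial_r\Omega$, the resulting term $r\partial_r^2\widetilde\Omega V^{(\alpha,a)} = \omega_k\, r\partial_r\bigl(\partial_k\widetilde\Omega V^{(\alpha,a)}\bigr)$ lives at index $|\alpha'|+|a'| = |\alpha|+|a|+2$, not $|\alpha|+|a|+1$; consequently the Sobolev right-hand side is controlled by $Y_{|\alpha|+|a|+3}+E_{|\alpha|+|a|+2}$ rather than $Y_{|\alpha|+|a|+2}+E_{|\alpha|+|a|+2}$. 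This is exactly what the paper obtains, and it is the true reason the hypothesis is $|\alpha|+|a|\le\kappa-7$: one needs $|\alpha|+|a|+3\le\kappa-4$ so that $Y_{|\alpha|+|a|+3}\le Y_{\kappa-4}\lesssim E_{\kappa-3}$. With your stated bound you would only need $|\alpha|+|a|\le\kappa-6$, so your closing remark about the derivative budget would not quite add up. Once the index is corrected, your argument and the paper's coincide.
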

\begin{proof}
In view of  \eqref{der-decomp} and \eqref{K-S-1}, we only need to show
\begin{equation*}
\langle t \rangle^{\frac 32}
\big\||\partial_rV^{(\alpha,a)}+\partial_rH^{(\alpha,a)}\cdot\omega |+
|\partial_rH^{(\alpha,a)}\cdot\omega^\perp| \big\|_{L^\infty( r\geq\langle t \rangle/2)}
\lesssim E_{\kappa-3}^{\frac12}.
\end{equation*}
Note that
\begin{eqnarray*}
&&\partial_\theta(V^{(\alpha,a)}+H^{(\alpha,a)}\cdot\omega)=\widetilde{\Omega}V^{(\alpha,a)}+\widetilde{\Omega}H^{(\alpha,a)}\cdot\omega.
\end{eqnarray*}
By \eqref{K-S-1} and Lemma \ref{lemmaWeighted-Energy}, one gets
\begin{align*}
&\|r^\frac 32(\partial_rV^{(\alpha,a)}+\partial_rH^{(\alpha,a)}\cdot\omega) \|^2_{L^\infty(r\geq \langle t \rangle/2)}\\
&\lesssim \sum_{d=0,1}\big\{
\|\partial_r \Omega^d \Large[ r(\partial_rV^{(\alpha,a)}+\partial_rH^{(\alpha,a)}\cdot\omega)\Large]\|_{L^2}^2
+\|\Omega^d \Large[r(\partial_rV^{(\alpha,a)}+\partial_rH^{(\alpha,a)}\cdot\omega) \Large]\|_{L^2}^2\big\}\\
&\lesssim \sum_{d=0,1}\big\{
\|r(\partial^2_r\widetilde{\Omega}^d V^{(\alpha,a)}
+\partial^2_r\widetilde{\Omega}^dH^{(\alpha,a)}\cdot\omega)\|_{L^2}^2+
\|\partial_r \widetilde{\Omega}^dV^{(\alpha,a)}
+\partial_r\widetilde{\Omega}^dH^{(\alpha,a)}\cdot\omega\|_{L^2}^2 \\
&\qquad\quad
+\|r(\partial_r\widetilde{\Omega}^dV^{(\alpha,a)}
+\partial_r\widetilde{\Omega}^dH^{(\alpha,a)}\cdot\omega) \|_{L^2}^2\big\}
\\[-4mm]\\
&\lesssim Y_{|\alpha|+|a|+3}+E_{|\alpha|+|a|+2}
\lesssim E_{\kappa-3}.
\end{align*}
The first part of \eqref{Good-L-infy-1} is clear from the fact that $r\geq\langle t \rangle/2$. The proof for the remaining part of the inequality is similar. We omit the details.
\end{proof}

\section{Energy Estimate}

This section is devoted to the energy estimate. We split the proof into three subsections, which correspond to the
highest-order modified energy estimate, the highest-order standard energy estimate
and the lower-order standard energy estimate, respectively. Here in this section, both $\mathcal{E}_\kappa$ and $E_\kappa$
will be called energies. To avoid confusion, we will call $E_\kappa$ the standard energy and still call $\mathcal{E}_\kappa$ the modified energy.

\subsection{Higher-order Modified Energy Estimate}

We first take care of the highest-order modified energy estimate.
One need to be very careful about the derivative loss problem.
Ignoring the diffusion, at   first glance, we will always lose one derivative in the highest-order modified energy estimate
due to the ``fully nonlinear" effect. The nonlocal effect and the application of the  ghost weight make this problem
even more complicated. Luckily, a delicate analysis of the nonlinearities shows that the system has the
requisite symmetry which is hidden in the Riesz transform. Actually, we can integrate
by parts in a way which will produce a Laplacian operator in the worst terms (the
worst terms refer to the terms with a  derivative loss.  The other terms
do not have such problems and the null structure is satisfied).
Moreover, after   gaining the  one derivative, the null condition
is  present again. Then we can take full  advantage of this condition by the ghost weight method.

Let $\kappa\geq 12$, $|\alpha|+|a| \leq \kappa-1$, $\sigma =r-t$ and $q(\sigma)=\arctan \sigma$. We write $e^q=e^{q(\sigma)}$ for simplicity.
After applying $\nabla$ to \eqref{VisElas-Gamma},
we take the $L^2$ inner product of the first and
second equations  of the resulting system
 with $\nabla V^{(\alpha,a)} e^q$ and $\nabla H^{(\alpha,a)} e^q$, respectively, then adding them up, we get
\begin{eqnarray}\label{EN-Es1}
&&\frac 12\frac{d}{dt}
\int_{\mathbb{R}^2}(|\nabla V^{(\alpha,a)}|^2
    +|\nabla H^{(\alpha,a)}|^2)e^qdx\nonumber\\
&&-\int_{\mathbb{R}^2}
\mu\nabla\Delta\sum\limits_{l=0}^{\alpha} C_\alpha^l (-1)^{\alpha-l}V^{(l,a)}
\cdot\nabla V^{(\alpha,a)}e^q dx \nonumber\\
&&+\frac 12\sum_{1\leq i \leq 2} \int_{\mathbb{R}^2}\frac{|\nabla_iV^{(\alpha,a)}
+\nabla_iH^{(\alpha,a)}\cdot\omega|^2
+|\nabla_iH^{(\alpha,a)}\cdot\omega^\perp|^2}
{\langle t-r \rangle^2}e^qdx\nonumber\\
&&=\int_{\mathbb{R}^2} (\nabla f^1_{\alpha a} \cdot\nabla V^{(\alpha,a)}
+ \nabla f^2_{\alpha a} :\nabla H^{(\alpha,a)})e^q dx\nonumber\\
&&=I_1+I_2,
\end{eqnarray}
where
\begin{align*}
I_1=&\int_{\mathbb{R}^2}\nabla\big[
\nabla^\perp\cdot
\nabla\cdot \Delta^{-1}\big(-\nabla^\perp V^{(\alpha,a)}\otimes\nabla^\perp V
+\nabla^\perp H^{(\alpha,a)}\otimes \nabla^\perp H \big)\\
&\qquad +\nabla^\perp\cdot
\nabla\cdot \Delta^{-1}\big(-\nabla^\perp V\otimes\nabla^\perp V^{(\alpha,a)}+
\nabla^\perp H\otimes \nabla^\perp H^{(\alpha,a)} \big)
 \big]\cdot
\nabla V^{(\alpha,a)} e^qdx\\
&+\int_{\mathbb{R}^2}\nabla(\nabla^\perp H^{(\alpha,a)}\nabla V
+\nabla^\perp H\nabla V^{(\alpha,a)}):\nabla H^{(\alpha,a)} e^qdx,
\end{align*}
and
\begin{align*}
I_2=&\sum_{\tiny\begin{matrix} \beta+\gamma=\alpha, b + c = a
\\|\beta|+|b|,|\gamma|+|c|<|\alpha|+|a| \end{matrix}}
\Big\{C_\alpha^\beta C_a^b\int_{\mathbb{R}^2} \nabla\big[
\nabla^\perp\cdot
\nabla\cdot \Delta^{-1}\big(-\nabla^\perp V^{(\beta,b)}\otimes\nabla^\perp V^{(\gamma,c)} \\[-6mm]\\
&\qquad\qquad\qquad\qquad+\nabla^\perp H^{(\beta,b)}\otimes
\nabla^\perp H^{(\gamma,c)} \big)
 \big]\cdot \nabla V^{(\alpha,a)} e^qdx\\[-4mm]\\
&\qquad\qquad +C_\alpha^\beta C_a^b
\int_{\mathbb{R}^2} \nabla(\nabla^\perp H^{(\beta,b)}
\nabla V^{(\gamma,c)} ):\nabla H^{(\alpha,a)} e^qdx \Big\}.
\end{align*}
Here $I_1$  consist of  the terms  which contain the highest order derivatives,
namely when all derivatives hit the same factor in the nonlinear term.
To avoid notational confusion, we mention that $\nabla_kH^{(\alpha,a)}\cdot\omega$ and
$\nabla_kH^{(\alpha,a)}\cdot\omega^\perp$
appearing  in the ghost weight energy  \eqref{ghost}
mean $(\nabla_kH^{(\alpha,a)})\cdot\omega$ and
$(\nabla_kH^{(\alpha,a)})\cdot\omega^\perp$.
This notation convention will always be used in the following argument.
Also we denote (see third line of \eqref{EN-Es1})
\begin{align}  \label{ghost}
G_{\kappa}(t)=\sum_{|\alpha|+|a|\leq\kappa-1}\sum_{1\leq i \leq 2} \int_{\mathbb{R}^2}\frac{|\nabla_iV^{(\alpha,a)}
+\nabla_iH^{(\alpha,a)}\cdot\omega|^2
+|\nabla_iH^{(\alpha,a)}\cdot\omega^\perp|^2}
{\langle t-r \rangle^2}e^qdx.
\end{align}

Step 1: estimate of the highest order term $I_1$.

We divide $I_1$ into five terms:
\begin{align*}
I_1=I_{11}+I_{12}+I_{13}+I_{14}+I_{15},
\end{align*}
where
\begin{align*}
&I_{11}=-\int_{\mathbb{R}^2}\nabla\big[\nabla^\perp\cdot
\nabla\cdot \Delta^{-1}(\nabla^\perp V^{(\alpha,a)}\otimes\nabla^\perp V)\big]\cdot\nabla V^{(\alpha,a)} e^qdx,\\
&I_{12}=\int_{\mathbb{R}^2}\nabla\big[
\nabla^\perp\cdot\nabla\cdot \Delta^{-1}(
\nabla^\perp H^{(\alpha,a)}\otimes \nabla^\perp H)\big]
\cdot\nabla V^{(\alpha,a)} e^qdx,\\
&I_{13}=-\int_{\mathbb{R}^2}\nabla \big[\nabla^\perp\cdot\nabla\cdot \Delta^{-1}(
\nabla^\perp V\otimes\nabla^\perp V^{(\alpha,a)})\big]
\cdot\nabla V^{(\alpha,a)} e^qdx,\\
&I_{14}=\int_{\mathbb{R}^2}\nabla\big[\nabla^\perp\cdot\nabla\cdot \Delta^{-1}(
\nabla^\perp H\otimes \nabla^\perp H^{(\alpha,a)})
 \big]\cdot\nabla V^{(\alpha,a)}  e^qdx,\\
&I_{15}=\int_{\mathbb{R}^2}\nabla\big(\nabla^\perp H^{(\alpha,a)}\nabla V
+\nabla^\perp H\nabla V^{(\alpha,a)}\big):\nabla H^{(\alpha,a)} e^qdx.
\end{align*}
Now we transform  $I_{11}$ to $I_{15}$ one by one.   The goal is  take advantage
of the symmetric nature of the original system to
get rid of the derivative loss that appears. Due to the good property of the original system
it is expected that one can get rid of the derivative loss,  however this requires some lengthy
calculations.  For $I_{11}$, we deduce by
integration by parts   that
\begin{align*}
I_{11}&=-\int_{\mathbb{R}^2}\nabla\nabla^\perp\cdot\nabla\cdot \Delta^{-1}
\big(\nabla^\perp V^{(\alpha,a)}\otimes\nabla^\perp V \big)
\cdot\nabla V^{(\alpha,a)} e^q dx\nonumber\\
&=-\int_{\mathbb{R}^2}\nabla_k\nabla^\perp_i\nabla_j \Delta^{-1}
\big(\nabla^\perp_iV^{(\alpha,a)}\nabla^\perp_j V \big)
\nabla_kV^{(\alpha,a)} e^q dx\nonumber\\
&=-\int_{\mathbb{R}^2}\nabla^\perp_i\nabla_j \Delta^{-1}
\big(\nabla_k\nabla^\perp_iV^{(\alpha,a)}\nabla^\perp_j V \big)
\nabla_kV^{(\alpha,a)} e^q dx\nonumber\\
&\quad-\int_{\mathbb{R}^2}\nabla^\perp_i\nabla_j \Delta^{-1}
\big(\nabla^\perp_iV^{(\alpha,a)}\nabla_k\nabla^\perp_j V \big)
\nabla_kV^{(\alpha,a)} e^q dx \nonumber\\
&=\int_{\mathbb{R}^2}\nabla^\perp_i\nabla_j \Delta^{-1}
\big(\nabla_kV^{(\alpha,a)}\nabla^\perp_i\nabla^\perp_j V \big)
\nabla_kV^{(\alpha,a)} e^q dx\nonumber\\
&\quad-\int_{\mathbb{R}^2}\nabla^\perp_i\nabla^\perp_i\nabla_j \Delta^{-1}
\big(\nabla_kV^{(\alpha,a)}\nabla^\perp_j V \big)
\nabla_kV^{(\alpha,a)} e^q dx\nonumber\\
&\quad-\int_{\mathbb{R}^2}\nabla^\perp_i\nabla_j \Delta^{-1}
\big(\nabla^\perp_iV^{(\alpha,a)}\nabla_k\nabla^\perp_j V \big)
\nabla_kV^{(\alpha,a)} e^q dx \nonumber\\
&=\int_{\mathbb{R}^2}\nabla^\perp_i\nabla_j \Delta^{-1}
\big(\nabla_kV^{(\alpha,a)}\nabla^\perp_i\nabla^\perp_j V \big)
\nabla_kV^{(\alpha,a)} e^q dx\nonumber\\
&\quad+\frac{1}{2}\int_{\mathbb{R}^2}
|\nabla V^{(\alpha,a)}|^2
\nabla_j(\nabla^\perp_j V e^q) dx\nonumber\\
&\quad-\int_{\mathbb{R}^2}\nabla^\perp_i\nabla_j \Delta^{-1}
\big(\nabla^\perp_iV^{(\alpha,a)}\nabla_k\nabla^\perp_j V \big)
\nabla_kV^{(\alpha,a)} e^q dx.
\end{align*}
Next, for $I_{12}$  we get  by  integration by parts that
\begin{align*}
I_{12}&=-\int_{\mathbb{R}^2}\nabla\nabla^\perp\cdot\nabla\cdot \Delta^{-1}
\big(\nabla^\perp V\otimes\nabla^\perp V^{(\alpha,a)} \big)
\cdot\nabla V^{(\alpha,a)} e^q dx\nonumber\\
&=-\int_{\mathbb{R}^2}\nabla_k\nabla^\perp_i\nabla_j \Delta^{-1}
\big(\nabla^\perp_i V\nabla^\perp_jV^{(\alpha,a)} \big)
\nabla_kV^{(\alpha,a)} e^q dx\nonumber\\
&=-\int_{\mathbb{R}^2}\nabla^\perp_i\nabla_j \Delta^{-1}
\big(\nabla_k\nabla^\perp_i V\nabla^\perp_jV^{(\alpha,a)} \big)
\nabla_kV^{(\alpha,a)} e^q dx\nonumber\\
&\quad-\int_{\mathbb{R}^2}\nabla^\perp_i\nabla_j \Delta^{-1}
\big(\nabla^\perp_i V\nabla_k\nabla^\perp_jV^{(\alpha,a)} \big)
\nabla_kV^{(\alpha,a)} e^q dx\nonumber\\
&=-\int_{\mathbb{R}^2}\nabla^\perp_i\nabla_j \Delta^{-1}
\big(\nabla_k\nabla^\perp_i V\nabla^\perp_jV^{(\alpha,a)} \big)
\nabla_kV^{(\alpha,a)} e^q dx\nonumber\\
&\quad+\int_{\mathbb{R}^2}\nabla^\perp_i\nabla_j \Delta^{-1}
\big(\nabla^\perp_i\nabla^\perp_j V\nabla_kV^{(\alpha,a)} \big)
\nabla_kV^{(\alpha,a)} e^q dx.
\end{align*}
Then for $I_{13}$, we write
\begin{align*}
I_{13}&=\int_{\mathbb{R}^2}
\nabla\nabla^\perp\cdot\nabla\cdot \Delta^{-1}
\big(\nabla^\perp H^{(\alpha,a)} \otimes\nabla^\perp H \big)
\cdot\nabla V^{(\alpha,a)} e^q dx\nonumber\\
&=\int_{\mathbb{R}^2}
\nabla_k\nabla^\perp_i\nabla_j \Delta^{-1}
\big(\nabla^\perp_i H^{(\alpha,a)} \cdot\nabla^\perp_j H \big)
\nabla_kV^{(\alpha,a)} e^q dx\nonumber\\
&=\int_{\mathbb{R}^2}
\nabla^\perp_i\nabla_j \Delta^{-1}
\big(\nabla_k\nabla^\perp_i H^{(\alpha,a)} \cdot\nabla^\perp_j H \big)
\nabla_kV^{(\alpha,a)} e^q dx\nonumber\\
&\quad+\int_{\mathbb{R}^2}
\nabla^\perp_i\nabla_j \Delta^{-1}
\big(\nabla^\perp_i H^{(\alpha,a)} \cdot\nabla_k\nabla^\perp_j H \big)
\nabla_kV^{(\alpha,a)} e^q dx\nonumber\\
&=\int_{\mathbb{R}^2}
\nabla^\perp_i\nabla^\perp_i\nabla_j \Delta^{-1}
\big(\nabla_k H^{(\alpha,a)} \cdot\nabla^\perp_j H \big)
\nabla_kV^{(\alpha,a)} e^q dx\nonumber\\
&\quad-\int_{\mathbb{R}^2}
\nabla^\perp_i\nabla_j \Delta^{-1}
\big(\nabla_k H^{(\alpha,a)} \cdot\nabla^\perp_i\nabla^\perp_j H \big)
\nabla_kV^{(\alpha,a)} e^q dx\nonumber\\
&\quad+\int_{\mathbb{R}^2}
\nabla^\perp_i\nabla_j \Delta^{-1}
\big(\nabla^\perp_i H^{(\alpha,a)} \cdot\nabla_k\nabla^\perp_j H \big)
\nabla_kV^{(\alpha,a)} e^q dx\nonumber\\
&=-\int_{\mathbb{R}^2}
\nabla_k H^{(\alpha,a)} \cdot\nabla^\perp_j H\nabla_j\big(
\nabla_kV^{(\alpha,a)} e^q \big)dx\nonumber\\
&\quad-\int_{\mathbb{R}^2}
\nabla^\perp_i\nabla_j \Delta^{-1}
\big(\nabla_k H^{(\alpha,a)} \cdot\nabla^\perp_i\nabla^\perp_j H \big)
\nabla_kV^{(\alpha,a)} e^q dx\nonumber\\
&\quad+\int_{\mathbb{R}^2}
\nabla^\perp_i\nabla_j \Delta^{-1}
\big(\nabla^\perp_i H^{(\alpha,a)} \cdot\nabla_k\nabla^\perp_j H \big)
\nabla_kV^{(\alpha,a)} e^q dx.
\end{align*}
For $I_{14}$, we deduce similarly  to $I_{12}$   that
\begin{align*}
I_{14}&=\int_{\mathbb{R}^2}\nabla
\nabla^\perp\cdot\nabla\cdot \Delta^{-1}
\big(\nabla^\perp H\otimes \nabla^\perp H^{(\alpha,a)} \big)
\cdot\nabla V^{(\alpha,a)} e^q dx\nonumber\\
&=\int_{\mathbb{R}^2}
\nabla_k\nabla^\perp_i\nabla_j \Delta^{-1}
\big(\nabla^\perp_i H \cdot\nabla^\perp_j H^{(\alpha,a)} \big)
\nabla_kV^{(\alpha,a)} e^q dx\nonumber\\
&=\int_{\mathbb{R}^2}
\nabla^\perp_i\nabla_j \Delta^{-1}
\big(\nabla_k\nabla^\perp_i H \cdot\nabla^\perp_j H^{(\alpha,a)} \big)
\nabla_kV^{(\alpha,a)} e^q dx\nonumber\\
&\quad+\int_{\mathbb{R}^2}
\nabla^\perp_i\nabla_j \Delta^{-1}
\big(\nabla^\perp_i H \cdot\nabla_k\nabla^\perp_j H^{(\alpha,a)} \big)
\nabla_kV^{(\alpha,a)} e^q dx\nonumber\\
&=\int_{\mathbb{R}^2}
\nabla^\perp_i\nabla_j \Delta^{-1}
\big(\nabla_k\nabla^\perp_i H \cdot\nabla^\perp_j H^{(\alpha,a)} \big)
\nabla_kV^{(\alpha,a)} e^q dx\nonumber\\
&\quad-\int_{\mathbb{R}^2}
\nabla^\perp_i\nabla_j \Delta^{-1}
\big(\nabla^\perp_i \nabla^\perp_j H \cdot\nabla_k H^{(\alpha,a)} \big)
\nabla_kV^{(\alpha,a)} e^q dx.
\end{align*}
For $I_{15}$, we have
\begin{align*}
I_{15}&=\int_{\mathbb{R}^2}\nabla_k(\nabla^\perp_j H^{(\alpha,a)}_i\nabla_j V
+\nabla^\perp_j H_i\nabla_j V^{(\alpha,a)})\nabla_kH^{(\alpha,a)}_i e^qdx\nonumber\\
&=\int_{\mathbb{R}^2}(\nabla_k\nabla^\perp_j H^{(\alpha,a)}_i\nabla_j V
+\nabla^\perp_j H^{(\alpha,a)}_i\nabla_k\nabla_j V)\nabla_kH^{(\alpha,a)}_i e^qdx\nonumber\\
&\quad+\int_{\mathbb{R}^2}(\nabla^\perp_j H_i\nabla_k\nabla_j V^{(\alpha,a)}
+\nabla_k\nabla^\perp_j H_i\nabla_j V^{(\alpha,a)})\nabla_kH^{(\alpha,a)}_i e^qdx\nonumber\\
&=-\frac12 \int_{\mathbb{R}^2}|\nabla H^{(\alpha,a)}|^2 \nabla^\perp_j(\nabla_j Ve^q)dx
+\int_{\mathbb{R}^2}\nabla^\perp_j H_i\nabla_k\nabla_j V^{(\alpha,a)}\nabla_kH^{(\alpha,a)}_i e^qdx\nonumber\\
&\quad+\int_{\mathbb{R}^2}(\nabla^\perp_j H^{(\alpha,a)}_i\nabla_k\nabla_jV
+\nabla_k\nabla^\perp_j H_i\nabla_j V^{(\alpha,a)})
\nabla_kH^{(\alpha,a)}_i e^qdx.
\end{align*}
Inserting the above equalities from $I_{11}$ to $I_{15}$ into $I_1$,
we get
\begin{align}\label{I1}
I_1=&\int_{\mathbb{R}^2}\nabla^\perp_i\nabla_j \Delta^{-1}
\big(\nabla^\perp_i H^{(\alpha,a)} \cdot\nabla_k\nabla^\perp_j H-\nabla^\perp_iV^{(\alpha,a)}\nabla_k\nabla^\perp_j V \big)
\nabla_kV^{(\alpha,a)} e^q dx\nonumber\\
&+\int_{\mathbb{R}^2}\nabla^\perp_i\nabla_j \Delta^{-1}
\big(\nabla_kV^{(\alpha,a)}\nabla^\perp_i\nabla^\perp_j V
-\nabla_k H^{(\alpha,a)} \cdot\nabla^\perp_i\nabla^\perp_j H\big)
\nabla_kV^{(\alpha,a)} e^q dx\nonumber\\
&+\int_{\mathbb{R}^2}\nabla^\perp_i\nabla_j \Delta^{-1}
\big(\nabla_k\nabla^\perp_i H \cdot\nabla^\perp_j H^{(\alpha,a)}
-\nabla_k\nabla^\perp_i V\nabla^\perp_jV^{(\alpha,a)} \big)
\nabla_kV^{(\alpha,a)} e^q dx\nonumber\\
&+\int_{\mathbb{R}^2}\nabla^\perp_i\nabla_j \Delta^{-1}
\big(\nabla^\perp_i\nabla^\perp_j V\nabla_kV^{(\alpha,a)}
-\nabla^\perp_i \nabla^\perp_j H \cdot\nabla_k H^{(\alpha,a)} \big)
\nabla_kV^{(\alpha,a)} e^q dx\nonumber\\
&+\frac{1}{2}\int_{\mathbb{R}^2}
|\nabla V^{(\alpha,a)}|^2
\nabla_j(\nabla^\perp_j V e^q) dx
-\frac12 \int_{\mathbb{R}^2}|\nabla H^{(\alpha,a)}|^2
\nabla^\perp_j(\nabla_j Ve^q)dx\nonumber\\
&-\int_{\mathbb{R}^2}\nabla_k H^{(\alpha,a)} \cdot\nabla^\perp_j H\nabla_j\big(
\nabla_kV^{(\alpha,a)} e^q \big)dx
+\int_{\mathbb{R}^2}\nabla^\perp_j H_i\nabla_k\nabla_j V^{(\alpha,a)}\nabla_kH^{(\alpha,a)}_i e^qdx\nonumber\\
&+\int_{\mathbb{R}^2}(\nabla^\perp_j H^{(\alpha,a)}_i\nabla_k\nabla_jV
+\nabla_k\nabla^\perp_j H_i\nabla_j V^{(\alpha,a)})
\nabla_kH^{(\alpha,a)}_i e^qdx.
\end{align}
In view of \eqref{I1}, one can see that we have gained one derivative
compared with the original expression. We still need to make the strong null condition
appear.

We start by treating  the first four lines
on the right hand side of \eqref{I1}. It is obvious that they  have the same structure so we only treat the first one.
By the $L^2$ boundness of the Riesz transform,  the first term is  bounded by
\begin{align}\label{HI1}
\sum_{1\leq i, j, k\leq 2}\|\nabla_i H^{(\alpha,a)} \cdot\nabla_k\nabla_j H-\nabla_iV^{(\alpha,a)}\nabla_k\nabla_j V\|_{L^2}
\|\nabla V^{(\alpha,a)}\|_{L^2}.
\end{align}
Now,  to see the strong   null condition,
we employ the orthogonal decomposition into radial and transverse directions:
\begin{align}\label{othor-decomp1}
&\nabla_i H^{(\alpha,a)} \cdot\nabla_k\nabla_j H-\nabla_iV^{(\alpha,a)}\nabla_k\nabla_j V
\nonumber\\
&=\nabla_i H^{(\alpha,a)} \cdot\omega \nabla_k\nabla_j H\cdot \omega
+\nabla_i H^{(\alpha,a)} \cdot\omega^\perp \nabla_k\nabla_j H\cdot \omega^\perp
-\nabla_iV^{(\alpha,a)}\nabla_k\nabla_j V\nonumber\\
&=(\nabla_iV^{(\alpha,a)}
+\nabla_i H^{(\alpha,a)}\cdot\omega) \nabla_k\nabla_j H\cdot\omega \nonumber\\
&\quad-\nabla_iV^{(\alpha,a)}
( \nabla_k\nabla_j V+\nabla_k\nabla_j H\cdot\omega)
+\nabla_i H^{(\alpha,a)} \cdot\omega^\perp \nabla_k\nabla_j H\cdot \omega^\perp.
\end{align}
Now we are ready to estimate \eqref{HI1}.
When integrating over the  domain  $ \{   r\geq \langle t\rangle/2 \}$,
 we use  \eqref{K-S-2},
Lemma \ref{lemmaWeighted-Energy} and Lemma \ref{LemmaGood-L-infy}   to  get
\begin{align}\label{II1}
&\sum_{1\leq i, j, k\leq 2}
\|\nabla_i H^{(\alpha,a)} \cdot\nabla_k\nabla_j H-\nabla_iV^{(\alpha,a)}\nabla_k\nabla_j V
\|_{L^2(r\geq \langle t\rangle/2)}
\mathcal{E}_\kappa^{\frac 12} \nonumber\\[-4mm]\nonumber\\
&\lesssim\sum_{1\leq i, j, k\leq 2}\Big(
\big\|\frac{\nabla_iV^{(\alpha,a)}+\nabla_i H^{(\alpha,a)}\cdot\omega}{\langle r-t\rangle} \big\|_{L^2}
\|\langle r-t\rangle\nabla_k\nabla_j H\cdot\omega\|_{L^\infty(r\geq \langle t\rangle/2)}
\mathcal{E}_\kappa^{\frac 12} \nonumber\\
&\qquad\qquad+\|\nabla_iV^{(\alpha,a)}\|_{L^2}
\| \nabla_k\nabla_j V+\nabla_k\nabla_j H\cdot\omega\|_{L^\infty(r\geq \langle t\rangle/2)}
\mathcal{E}_\kappa^{\frac 12} \nonumber\\[-4mm]\nonumber\\
&\qquad\qquad+\|\nabla_i H^{(\alpha,a)} \cdot\omega^\perp\|_{L^2}
\|\nabla_k\nabla_j H\cdot \omega^\perp\|_{L^\infty(r\geq \langle t\rangle/2)}
\mathcal{E}_\kappa^{\frac 12} \Big)\nonumber\\[-4mm]\nonumber\\
&\lesssim  \eta G_\kappa+C_{\eta}\langle t \rangle ^{-1}\mathcal{E}_\kappa E_{\kappa-3}
+\langle t \rangle ^{-\frac 32}\mathcal{E}_\kappa E_{\kappa-3}^{\frac 12},
\end{align}
for any  $\eta > 0$.
For the region  $\{  r\leq \langle t\rangle/2  \} $,  the bound for \eqref{HI1} is easier.
By \eqref{K-S-3} and Lemma \ref{lemmaWeighted-Energy}, one has
\begin{align}\label{II2}
&\sum_{1\leq i, j, k\leq 2}\|\nabla_i H^{(\alpha,a)} \cdot\nabla_k\nabla_j H
 -\nabla_iV^{(\alpha,a)}\nabla_k\nabla_j V\|_{L^2(r\leq \langle t\rangle/2)}
\nonumber\\
&\leq \big\| |\nabla H^{(\alpha,a)}| |\nabla^2 H|+ |\nabla V^{(\alpha,a)}| |\nabla^2 V|
\big\|_{L^2(r\leq \langle t\rangle/2)}
 \nonumber\\[-4mm]\nonumber\\
&\leq
\| \nabla H^{(\alpha,a)}\|_{L^2} \|\nabla^2 H\|_{L^\infty(r\leq \langle t\rangle/2)}
+\|\nabla V^{(\alpha,a)}\|_{L^2} \|\nabla^2 V\|_{L^\infty(r\leq \langle t\rangle/2)}
 \nonumber\\[-4mm]\nonumber\\
&\lesssim \langle t \rangle ^{-1}\mathcal{E}_\kappa^{\frac 12}  X_{\kappa-4}^{\frac 12}
\lesssim \langle t \rangle ^{-1}\mathcal{E}_\kappa^{\frac 12}  E_{\kappa-3}^{\frac 12}.
\end{align}
Consequently, inserting the above \eqref{II1} and \eqref{II2}
into \eqref{HI1} gives that
\begin{align*}
&\sum_{1\leq i, j, k\leq 2}\|\nabla_i H^{(\alpha,a)} \cdot\nabla_k\nabla_j H-\nabla_iV^{(\alpha,a)}\nabla_k\nabla_j V\|_{L^2}
\|\nabla V^{(\alpha,a)}\|_{L^2} \\
&\quad \lesssim  \eta G_\kappa+C_{\eta} \langle t \rangle^{-1} \mathcal{E}_\kappa E_{\kappa-3}^{\frac 12}.
\end{align*}
Here we have used  the \textit{a prior} assumption $E_{\kappa-3}\ll 1$.

Then, we estimate the fifth and sixth lines  of \eqref{I1}.
For these two lines, the decay
is much better. Direct integration by parts   shows that they are equal  to
\begin{align*}
&-\frac12\int_{\mathbb{R}^2}(
|\nabla H^{(\alpha,a)}|^2+|\nabla V^{(\alpha,a)}|^2)\partial_\theta V
\frac{e^q}{r\langle t-r \rangle^2}dx\nonumber\\
&+\int_{\mathbb{R}^2}\nabla_k H^{(\alpha,a)} \cdot
\partial_\theta H \nabla_kV^{(\alpha,a)}
 \frac{e^q}{r\langle t-r \rangle^2} dx.
\end{align*}
By \eqref{K-S-1}, the above can be further estimated by
\begin{align*}
\langle t \rangle ^{-\frac 32}\mathcal{E}_\kappa E_{\kappa-3}^{\frac 12}.
\end{align*}

Finally, we treat the last line of \eqref{I1}.
To show the null structure, we employ the orthogonal decomposition into radial and transverse directions to get that
\begin{align} \label{othor-decomp2}
&(\nabla^\perp_j H^{(\alpha,a)}_i\nabla_k\nabla_jV
+\nabla_k\nabla^\perp_j H_i\nabla_j V^{(\alpha,a)})
\nabla_kH^{(\alpha,a)}_i \nonumber\\[-4mm]\nonumber\\
&=(\nabla^\perp_j H^{(\alpha,a)}\cdot\omega\nabla_k\nabla_jV
+\nabla_k\nabla^\perp_j H\cdot\omega\nabla_j V^{(\alpha,a)})
\nabla_kH^{(\alpha,a)}\cdot\omega\nonumber\\[-4mm]\nonumber\\
&+(\nabla^\perp_j H^{(\alpha,a)}\cdot\omega^\perp\nabla_k\nabla_jV
+\nabla_k\nabla^\perp_j H\cdot\omega^\perp\nabla_j V^{(\alpha,a)})
\nabla_kH^{(\alpha,a)}\cdot\omega^\perp.
\end{align}
For the expression inside the bracket on the first line
of the right-hand side of \eqref{othor-decomp2},
one can further reorganize them as follows:
\begin{align*} 
&\nabla^\perp_j H^{(\alpha,a)}\cdot\omega\nabla_k\nabla_jV
+\nabla_k\nabla^\perp_j H\cdot\omega\nabla_j V^{(\alpha,a)} \nonumber\\[-4mm]\nonumber\\
&=(\nabla^\perp_j H^{(\alpha,a)}\cdot\omega
+\nabla^\perp_j V^{(\alpha,a)}) \nabla_k\nabla_jV
+(\nabla_k\nabla^\perp_j H\cdot\omega +\nabla_k\nabla^\perp_j V)
\nabla_j V^{(\alpha,a)} \nonumber\\[-4mm]\nonumber\\
&\quad -\nabla^\perp_j V^{(\alpha,a)}\nabla_k\nabla_jV
-\nabla_k\nabla^\perp_j V\nabla_j V^{(\alpha,a)}\nonumber\\[-4mm]\nonumber\\
&=(\nabla^\perp_j H^{(\alpha,a)}\cdot\omega
+\nabla^\perp_j V^{(\alpha,a)}) \nabla_k\nabla_jV
+(\nabla_k\nabla^\perp_j H\cdot\omega +\nabla_k\nabla^\perp_j V)
\nabla_j V^{(\alpha,a)}.
\end{align*}
Here we have used the fact that
\begin{align*}
\nabla^\perp_j V^{(\alpha,a)}\nabla_k\nabla_jV
+\nabla_k\nabla^\perp_j V\nabla_j V^{(\alpha,a)}=0.
\end{align*}
Hence, for the last line of \eqref{I1}, by \eqref{K-S-2},
Lemma \ref{lemmaWeighted-Energy} and Lemma \ref{LemmaGood-L-infy},
we can estimate the integral  over the region  $ \{ r\geq \langle t\rangle/2 \} $ by
\begin{align*}
&\|(\nabla^\perp_j H^{(\alpha,a)}\cdot\omega
+\nabla^\perp_j V^{(\alpha,a)}) \nabla_k\nabla_jV\|_{L^2(r\geq \langle t\rangle/2)}
\|\nabla_kH^{(\alpha,a)}\cdot\omega\|_{L^2}\nonumber\\[-4mm]\nonumber\\
&+\|(\nabla_k\nabla^\perp_j H\cdot\omega +\nabla_k\nabla^\perp_j V)
\nabla_j V^{(\alpha,a)}\|_{L^2(r\geq \langle t\rangle/2)}
\|\nabla_kH^{(\alpha,a)}\cdot\omega\|_{L^2} \nonumber\\[-4mm]\nonumber\\
&+\|(\nabla^\perp_j H^{(\alpha,a)}\cdot\omega^\perp\nabla_k\nabla_jV
+\nabla_k\nabla^\perp_j H\cdot\omega^\perp\nabla_j V^{(\alpha,a)})
\nabla_kH^{(\alpha,a)}\cdot\omega^\perp\|_{L^1(r\geq \langle t\rangle/2)}\nonumber\\[-4mm]\nonumber\\
&\lesssim
\big\|\frac{\nabla^\perp_j H^{(\alpha,a)}\cdot\omega
+\nabla^\perp_j V^{(\alpha,a)}}{\langle t-r\rangle} \big\|_{L^2}
\|\langle t-r\rangle\nabla^2 V\|_{L^\infty(r\geq \langle t\rangle/2)}
\mathcal{E}_{\kappa}^{\frac 12}\\
&\quad+\|\nabla_k\nabla^\perp_j H\cdot\omega +\nabla_k\nabla^\perp_j V\|_{L^\infty(r\geq \langle t\rangle/2)}
\|\nabla V^{(\alpha,a)}\|_{L^2}
\mathcal{E}_\kappa^{\frac 12}\nonumber\\[-4mm]\nonumber\\
&\quad+\big\|\frac{\nabla^\perp_k H^{(\alpha,a)}\cdot\omega^\perp}
{\langle t-r\rangle} \big\|_{L^2}
\big\|\langle t-r\rangle |\nabla U^{(\alpha,a)}| |\nabla^2 U|
\big\|_{L^2(r\geq \langle t\rangle/2)}\\
&\lesssim  \eta G_\kappa+C_{\eta}\langle t \rangle ^{-1}\mathcal{E}_\kappa E_{\kappa-3}
+\langle t \rangle ^{-\frac 32}\mathcal{E}_\kappa E_{\kappa-3}^{\frac 12}.
\end{align*}
In the region  $\{ r\leq \langle t\rangle/2 \}$, we can easily estimate the last line of \eqref{I1},
similarly  to \eqref{II2},  to deduce that  it is   controlled by
\begin{equation*}
\langle t \rangle ^{-1}\mathcal{E}_\kappa E_{\kappa-3}^{\frac 12}.
\end{equation*}
Thus we gather the estimates in step 1 to conclude that
\begin{equation*}
I_1 \lesssim  \eta G_\kappa+C_{\eta} \langle t \rangle^{-1} \mathcal{E}_\kappa E_{\kappa-3}^{\frac 12}.
\end{equation*}

Step 2: Estimate of the lower order term $I_2$.

We introduce $\tilde{f}^{\alpha a}$ given by
\begin{align*}
\tilde{f}_{ijk}^{\alpha a}=\sum_{\tiny\begin{matrix} \beta+\gamma=\alpha, b + c = a
\\|\beta|+|b|,|\gamma|+|c|<|\alpha|+|a| \end{matrix}}
C_{\alpha}^\beta C_a^b
\nabla_k(\nabla_iV^{(\beta,b)} \nabla_jV^{(\gamma,c)}
-\nabla_iH^{(\beta,b)}\cdot \nabla_jH^{(\gamma,c)}),
\end{align*}
and
\begin{align*}
\tilde{f}^{\alpha a}_2=\sum_{\tiny\begin{matrix} \beta+\gamma=\alpha, b + c = a
\\|\beta|+|b|,|\gamma|+|c|<|\alpha|+|a| \end{matrix}}
C_\alpha^\beta C_a^b
\nabla(\nabla^\perp H^{(\beta,b)}\nabla V^{(\gamma,c)} ).
\end{align*}
Using these notations, we can control $I_2$
from the $L^2$ boundness of the Riesz transform by
\begin{align*}
 \sum_{ijk} \|\tilde{f}_{ijk}^{\alpha a}\|_{L^2}\|\nabla V^{(\alpha,a)}\|_{L^2}
+\|\tilde{f}_{2}^{\alpha a}\|_{L^2}\|\nabla H^{(\alpha,a)}\|_{L^2}.
\end{align*}
Thus to estimate $I_2$, we only need to take care of $\tilde{f}_{ijk}^{\alpha a}$ and $\tilde{f}_{2}^{\alpha a}$.

First we treat $\tilde{f}_{ijk}^{\alpha a}$. One easily has
\begin{align*}
&\nabla_k(\nabla_iV^{(\beta,b)} \nabla_jV^{(\gamma,c)}
-\nabla_iH^{(\beta,b)}\cdot \nabla_jH^{(\gamma,c)})\\
&=\nabla_k\nabla_iV^{(\beta,b)} \nabla_jV^{(\gamma,c)}
-\nabla_k\nabla_iH^{(\beta,b)}\cdot \nabla_jH^{(\gamma,c)}\\
&\quad+\nabla_iV^{(\beta,b)} \nabla_k\nabla_jV^{(\gamma,c)}
-\nabla_iH^{(\beta,b)}\cdot \nabla_k\nabla_jH^{(\gamma,c)}.
\end{align*}
In view of fact that the last two lines in the above are similar,
we only concentrate on the first one.
To estimate of $\| \tilde{f}_{ijk}^{\alpha a}\|_{L^2}$,
we still divide the integral domain $\mathbb{R}^2$ into two different subdomains.

In the region  $\{  r\leq \langle t\rangle/2  \}$,
we have
\begin{align} \label{f_tilde2_}
\|\tilde{f}_{ijk}^{\alpha a}\|_{L^2(r\leq \langle t\rangle/2)}
\leq
\sum_{\tiny\begin{matrix} \beta+\gamma=\alpha, b + c = a
\\|\beta|+|b|,|\gamma|+|c|<|\alpha|+|a| \end{matrix}}
\big\| |\nabla^2 U^{(\beta,b)}| |\nabla U^{(\gamma,c)}|\big\|_{L^2(r\leq \langle t\rangle/2)}.
\end{align}
Here and in what follows, thanks to the fully nonlinear effect of the new formulation,
we always have one derivative in the lower order terms.
Thus one has room to use the weighted $L^2$ norm $X_{\kappa}$
even though  we are facing the derivative loss $X_{\kappa-2}\lesssim E_{\kappa-1}$.

For \eqref{f_tilde2_}, if $|c|+|\gamma|\leq|b|+|\beta|$,
then there holds $|b|+|\beta|+2\leq \kappa$, $|\gamma|+|c|+3\leq [(|\alpha|+|a|)/2]+3\leq \kappa-4$.
By \eqref{K-S-3} and Lemma \ref{lemmaWeighted-Energy},
we have
\begin{align*}
&\sum_{\tiny\begin{matrix} \beta+\gamma=\alpha, b + c = a
\\|\gamma|+|c|\leq |\beta|+|b|<|\alpha|+|a| \end{matrix}}
 \big\| |\nabla^2 U^{(\beta,b)}|
        |\nabla U^{(\gamma,c)}| \big\|_{L^2(r\leq \langle t\rangle/2)}\\
&\lesssim \sum_{\tiny\begin{matrix} \beta+\gamma=\alpha, b + c = a
\\|\gamma|+|c|\leq |\beta|+|b|<|\alpha|+|a| \end{matrix}}
\langle t \rangle^{-1}
\| \nabla^2 U^{(\beta,b)}\|_{L^2}
\| \langle t\rangle \nabla U^{(\gamma,c)}\|_{L^\infty(r\leq \langle t\rangle/2)}\\
&\lesssim \sum_{\tiny\begin{matrix} \beta+\gamma=\alpha, b + c = a
\\|\gamma|+|c|\leq |\beta|+|b|<|\alpha|+|a| \end{matrix}}
\langle t\rangle^{-1} \mathcal{E}_{|\beta|+|b|+2}^{\frac 12} X_{|\gamma|+|c|+3 }^{\frac 12}
\lesssim \langle t \rangle^{-1}  \mathcal{E}_{\kappa}^{\frac 12} E_{\kappa-3}^{\frac 12}.
\end{align*}
If $|b|+|\beta| < |c|+|\gamma|$,
then $|\gamma|+|c|+1\leq \kappa$, $|b|+|\beta|+4\leq [(|\alpha|+|a|)/2]+4\leq \kappa-4$.
We can similarly obtain
\begin{align*}
& \sum_{\tiny\begin{matrix} \beta+\gamma=\alpha, b + c = a
\\|\beta|+|b|<|\gamma|+|c|<|\alpha|+|a| \end{matrix}}
 \big\| |\nabla^2 U^{(\beta,b)}|
        |\nabla U^{(\gamma,c)}| \big\|_{L^2(r\leq \langle t\rangle/2)}\\
&\lesssim
 \sum_{\tiny\begin{matrix} \beta+\gamma=\alpha, b + c = a
\\|\beta|+|b|<|\gamma|+|c|<|\alpha|+|a| \end{matrix}}
\langle t\rangle^{-1}\mathcal{E}_{|\gamma|+|c|+1}^{\frac 12} X_{|\beta|+|b|+4 }^{\frac 12}
\lesssim \langle t \rangle^{-1}  \mathcal{E}_{\kappa}^{\frac 12} E_{\kappa-3}^{\frac 12}.
\end{align*}
Thus we arrive at
\begin{align*}
\|\tilde{f}_{ijk}^{\alpha a}\|_{L^2(r\leq \langle t\rangle/2)}
\lesssim \langle t \rangle^{-1}  \mathcal{E}_{\kappa}^{\frac 12} E_{\kappa-3}^{\frac 12}.
\end{align*}

In the region  $\{ r\geq \langle t\rangle/2 \} $,
we need to employ the null structure to get some  extra decay in time.
A natural idea is to use a variant version of Lemma \ref{Lem-Good-f},  however  this doesn't work
due to the derivative loss $Y_{\kappa-2}\lesssim E_{\kappa-1}$.
To solve this problem, we will use  the ghost weight energy at  all  derivative  levels.

For $\tilde{f}_{ijk}^{\alpha a}$, we organize similarly to the decomposition  \eqref{othor-decomp1}:
\begin{align*}
&\nabla_k\nabla_iV^{(\beta,b)} \nabla_jV^{(\gamma,c)}
-\nabla_k\nabla_iH^{(\beta,b)}\cdot \nabla_jH^{(\gamma,c)}
\nonumber\\
&=(\nabla_k\nabla_iV^{(\beta,b)}
 +\nabla_k\nabla_iH^{(\beta,b)}\cdot\omega)
 \nabla_jV^{(\gamma,c)}\\
&\quad-\nabla_k\nabla_iH^{(\beta,b)}\cdot\omega
 (\nabla_jV^{(\gamma,c)}+
 \nabla_jH^{(\gamma,c)}\cdot\omega)\\
&\quad-\nabla_k\nabla_iH^{(\beta,b)}\cdot\omega^\perp
 \nabla_j H^{(\gamma,c)}\cdot\omega^\perp.
\end{align*}
Thus
\begin{align} \label{f_tilde2}
&\|\tilde{f}_{ijk}^{\alpha a}\|_{L^2( r\leq \langle t\rangle/2 )} \|\nabla V^{(\alpha,a)}\|_{L^2}
  \nonumber  \leq
\sum_{\tiny\begin{matrix} \beta+\gamma=\alpha, b + c = a
\\|\beta|+|b|,|\gamma|+|c|<|\alpha|+|a| \end{matrix}} \nonumber\\
& \qquad\big(
\|(\nabla_k\nabla_iV^{(\beta,b)}
 +\nabla_k\nabla_iH^{(\beta,b)}\cdot\omega)
 \nabla_jV^{(\gamma,c)}\|_{L^2(r\leq \langle t\rangle/2)}\mathcal{E}_\kappa^\frac{1}{2}
\nonumber\\
&\qquad+\|\nabla_k\nabla_iH^{(\beta,b)}\cdot\omega
 (\nabla_jV^{(\gamma,c)}+
 \nabla_jH^{(\gamma,c)}\cdot\omega)\|_{L^2(r\leq \langle t\rangle/2)}\mathcal{E}_\kappa^\frac{1}{2}
\nonumber\\[-4mm]\nonumber\\
&\qquad+\|\nabla_k\nabla_iH^{(\beta,b)}\cdot\omega^\perp
\nabla_j H^{(\gamma,c)}\cdot\omega^\perp\|_{L^2(r\leq \langle t\rangle/2)}\mathcal{E}_\kappa^\frac{1}{2} \big).
\end{align}
When $|\gamma|+|c|\leq|\beta|+|b|$, by \eqref{K-S-2},
Lemma \ref{lemmaWeighted-Energy} and Lemma \ref{LemmaGood-L-infy},
the right hand side of \eqref{f_tilde2} can be  bounded by
\begin{align*}
&\qquad\sum_{1\leq i,j,k\leq 2}
\sum_{\tiny\begin{matrix} |\beta|+|b|<|\alpha|+|a|
\\|\gamma|+|c|\leq [(|\alpha|+|a|)/2] \end{matrix}} \\
&\quad\big(\|\frac{\nabla_k\nabla_iV^{(\beta,b)}
 +\nabla_k\nabla_iH^{(\beta,b)}\cdot\omega}
{\langle t-r\rangle}\|_{L^2}
\|\langle t-r\rangle\nabla_jV^{(\gamma,c)}\|_{L^\infty(r\leq \langle t\rangle/2)}
\mathcal{E}_k^\frac{1}{2} \nonumber\\[-4mm]\nonumber\\
&+\|\nabla_k\nabla_iH^{(\beta,b)}\cdot\omega\|_{L^2}
 \|(\nabla_jV^{(\gamma,c)}+
 v_jH^{(\gamma,c)}\cdot\omega)\|_{L^\infty(r\leq \langle t\rangle/2)}
\mathcal{E}_\kappa^\frac{1}{2} \nonumber\\[-4mm]\nonumber\\
&+\|\nabla_k\nabla_iH^{(\beta,b)}\cdot\omega^\perp\|_{L^2}
 \|\nabla_j H^{(\gamma,c)}\cdot\omega^\perp\|_{L^\infty(r\leq \langle t\rangle/2)}
\mathcal{E}_\kappa^\frac{1}{2} \big) \nonumber\\[-4mm]\nonumber\\
&\lesssim \eta G_\kappa+ C_{\eta}\langle t \rangle ^{-1}\mathcal{E}_\kappa E_{\kappa-3}
+\langle t \rangle ^{-\frac 32}\mathcal{E}_\kappa E_{\kappa-3}^{\frac 12}.
\end{align*}
Repeating  the above procedure, we can control the right hand side of \eqref{f_tilde2}
in  the case   $|\beta|+|b|\leq |\gamma|+|c|$ by
\begin{align*}
\eta G_\kappa +C_{\eta}\langle t \rangle ^{-1}\mathcal{E}_\kappa E_{\kappa-3}
+\langle t \rangle ^{-\frac 32}\mathcal{E}_\kappa E_{\kappa-3}^{\frac 12}.
\end{align*}
Thus,  we   get
\begin{align*}
\|\tilde{f}_{ijk}^{\alpha a}\|_{L^2}\|\nabla V^{(\alpha,a)}\|_{L^2}
\lesssim \eta G_\kappa +C_{\eta}\langle t \rangle ^{-1}\mathcal{E}_\kappa E_{\kappa-3}^{\frac 12}.
\end{align*}
Here we have used the \textit{a priori} estimate that $E_{\kappa-3}\ll 1$.

Next we turn our attention to $\| \tilde{f}^{\alpha a}_2 \|_{L^2}$.
Since the estimate is similar to $\| \tilde{f}^{\alpha a}_{ijk} \|_{L^2}$,
we only sketch the main line.
We still divide the integral domain $\mathbb{R}^2$ into two different parts to estimate
them separately. For the integral over the  domain  $ \{ r\leq \langle t\rangle/2  \} $, the estimate
is exactly the same as the one for  $\| \tilde{f}^{\alpha a}_{ijk} \|_{L^2(r\leq \langle t\rangle/2)}$.
For the region  $\{  r\geq \langle t\rangle/2\} $, we still need to make full use of the  appropriate null structure.
The estimate is similar to one for  $\| \tilde{f}^{\alpha a}_{ijk} \|_{L^2(r\geq \langle t\rangle/2)}$
once the null structure of $\tilde{f}^{\alpha a}_2$ is present.
Hence we only  show the  strong null structure of $\tilde{f}^{\alpha a}_2$ below.

Employing the orthogonal decomposition into radial and transverse directions,  any term
in the sum  defining  $I_2$ can be decomposed as:
\begin{align*}
&\sum_{\tiny\begin{matrix} \beta+\gamma=\alpha, b + c = a
\\|\beta|+|b|,|\gamma|+|c|<|\alpha|+|a| \end{matrix}}
C_\alpha^\beta C_a^b
\nabla_i(\nabla^\perp_j H^{(\beta,b)}\nabla_j V^{(\gamma,c)} ) \\
&=\sum_{\tiny\begin{matrix} \beta+\gamma=\alpha, b + c = a
\\|\beta|+|b|,|\gamma|+|c|<|\alpha|+|a| \end{matrix}}
C_\alpha^\beta C_a^b
\nabla_i\nabla^\perp_j H^{(\beta,b)}\nabla_j V^{(\gamma,c)}
+\nabla^\perp_j H^{(\beta,b)}\nabla_i\nabla_j V^{(\gamma,c)}\\
&=\sum_{\tiny\begin{matrix} \beta+\gamma=\alpha, b + c = a
\\|\beta|+|b|,|\gamma|+|c|<|\alpha|+|a| \end{matrix}}  \Big[  C_\alpha^\beta C_a^b
\big(\nabla_i\nabla^\perp_j H^{(\beta,b)}\cdot \omega \nabla_j V^{(\gamma,c)}
+\nabla^\perp_j H^{(\beta,b)}\cdot\omega\nabla_i\nabla_j V^{(\gamma,c)}\big)
\omega\\
&\quad \quad \quad \quad +
C_\alpha^\beta C_a^b
\big(\nabla_i\nabla^\perp_j H^{(\beta,b)}\cdot\omega^\perp\nabla_j V^{(\gamma,c)}
+\nabla^\perp_j H^{(\beta,b)}\cdot\omega^\perp\nabla_i\nabla_j V^{(\gamma,c)}
\big)\cdot\omega^\perp  \Big]   \\
&=\sum_{\tiny\begin{matrix} \beta+\gamma=\alpha, b + c = a
\\|\beta|+|b|,|\gamma|+|c|<|\alpha|+|a| \end{matrix}}   \Big[
C_\alpha^\beta C_a^b
\big(\nabla_i\nabla^\perp_j H^{(\beta,b)}\cdot \omega
      +\nabla_i\nabla^\perp_j V^{(\beta,b)}\big) \nabla_j V^{(\gamma,c)}\\
& \quad \quad \quad \quad  +C_\alpha^\beta C_a^b
\big(\nabla^\perp_j H^{(\beta,b)}\cdot\omega+\nabla^\perp_j V^{(\beta,b)}\big)
\nabla_i\nabla_j V^{(\gamma,c)}\\
& \quad \quad \quad \quad  +
C_\alpha^\beta C_a^b
\big(\nabla_i\nabla^\perp_j H^{(\beta,b)}\cdot\omega^\perp\nabla_j V^{(\gamma,c)}
+\nabla^\perp_j H^{(\beta,b)}\cdot\omega^\perp\nabla_i\nabla_j V^{(\gamma,c)}
\big)\cdot\omega^\perp  \Big].
\end{align*}
Here we have used the fact that
\begin{align*}
&\sum_{\tiny\begin{matrix} \beta+\gamma=\alpha, b + c = a
\\|\beta|+|b|,|\gamma|+|c|<|\alpha|+|a| \end{matrix}}
C_\alpha^\beta C_a^b
\nabla_i\nabla^\perp_j V^{(\beta,b)} \nabla_j V^{(\gamma,c)}\\
&+\sum_{\tiny\begin{matrix} \beta+\gamma=\alpha, b + c = a
\\|\beta|+|b|,|\gamma|+|c|<|\alpha|+|a| \end{matrix}}
C_\alpha^\beta C_a^b
\nabla_i\nabla_j V^{(\gamma,c)} \nabla^\perp_j V^{(\beta,b)}=0.
\end{align*}
Thus, we can estimate $\tilde{f}^{\alpha a}_2$ as $\tilde{f}^{\alpha a}_{ijk}$
to get that
\begin{align*}
\|\tilde{f}_{2}^{\alpha a}\|_{L^2} \|\nabla H^{(\alpha,a)}\|_{L^2}
\lesssim \eta G_\kappa +C_{\eta}\langle t \rangle ^{-1}\mathcal{E}_\kappa E_{\kappa-3}^{\frac 12}.
\end{align*}

Finally, we gather our estimates  for \eqref{EN-Es1} to derive that
\begin{eqnarray}
&&\frac 12\frac{d}{dt}
\int_{\mathbb{R}^2}(|\nabla V^{(\alpha,a)}|^2
    +|\nabla H^{(\alpha,a)}|^2)e^qdx\nonumber\\
&&-\int_{\mathbb{R}^2}
\mu\nabla\Delta\sum\limits_{l=0}^{\alpha} (-1)^{\alpha-l}V^{(l,a)}
\cdot\nabla V^{(\alpha,a)}e^q dx \nonumber\\
&&+\frac 12 \int_{\mathbb{R}^2}\frac{|\nabla V^{(\alpha,a)}
+\nabla H^{(\alpha,a)}\cdot\omega|^2
+|\nabla H^{(\alpha,a)}\cdot\omega^\perp|^2}
{\langle t-r \rangle^2}e^qdx\nonumber\\
&&\lesssim \eta G_\kappa +C_{\eta}\langle t \rangle ^{-1}\mathcal{E}_\kappa E_{\kappa-3}^{\frac 12}.
\end{eqnarray}
Moreover, the viscosity terms can be estimated as follows:
\begin{align*}
&-\int_{\mathbb{R}^2}
\mu\nabla\Delta\sum\limits_{l=0}^{\alpha}  C_\alpha^l (-1)^{\alpha-l}V^{(l,a)}
\cdot\nabla V^{(\alpha,a)}e^q dx \nonumber\\
&=\mu\int_{\mathbb{R}^2}|\Delta V^{(\alpha,a)}|^2e^q dx
+\int_{\mathbb{R}^2}
\mu\Delta\sum\limits_{l=0}^{\alpha-1} C_\alpha^l (-1)^{\alpha-l}V^{(l,a)}
\cdot\Delta V^{(\alpha,a)}e^q dx \nonumber\\
&\quad +\int_{\mathbb{R}^2}
\mu\Delta\sum\limits_{l=0}^{\alpha} C_\alpha^l (-1)^{\alpha-l}V^{(l,a)}
\nabla V^{(\alpha,a)}\cdot\nabla e^q dx \nonumber\\
&\geq\mu\int_{\mathbb{R}^2}|\Delta V^{(\alpha,a)}|^2e^q dx
-\mu \sum\limits_{l=0}^{\alpha-1} (C_\alpha^l)^2
\int_{\mathbb{R}^2}|\Delta V^{(l,a)}|^2 e^q dx
-\frac 14\mu\int_{\mathbb{R}^2} |\Delta V^{(\alpha,a)}|^2 e^qdx\nonumber\\
&\quad-\frac 14\mu\sum\limits_{l=0}^{\alpha} (C_\alpha^l)^2
\int_{\mathbb{R}^2}
|\Delta V^{(l,a)}|^2 e^{q} dx
 -\mu\int_{\mathbb{R}^2}
|\nabla V^{(\alpha,a)}|^2 e^{q} dx\nonumber\\
&\geq\frac 12\mu\int_{\mathbb{R}^2}|\Delta V^{(\alpha,a)}|^2e^q dx
-2\mu\sum\limits_{l=0}^{\alpha-1}(C_\alpha^l)^2\int_{\mathbb{R}^2}
|\Delta V^{(l,a)}|^2 e^q dx
-\mu\int_{\mathbb{R}^2}
|\nabla V^{(\alpha,a)}|^2 e^{q}dx.
\end{align*}
Consequently,
\begin{eqnarray*}
&&\frac 12\frac{d}{dt}
\int_{\mathbb{R}^2}(|\nabla V^{(\alpha,a)}|^2
    +|\nabla H^{(\alpha,a)}|^2)e^qdx\nonumber\\
&&+\frac 12\mu\int |\Delta V^{(\alpha,a)}|^2 e^qdx
-2\mu\sum\limits_{l=0}^{\alpha-1}(C_\alpha^l)^2\int_{\mathbb{R}^2}
|\Delta V^{(l,a)}|^2e^qdx
-\mu\int_{\mathbb{R}^2}
|\nabla V^{(\alpha,a)}|^2 e^q dx \nonumber\\
&&+\frac 12 \int_{\mathbb{R}^2}\frac{|\nabla V^{(\alpha,a)}
+\nabla H^{(\alpha,a)}\cdot\omega|^2
+|\nabla H^{(\alpha,a)}\cdot\omega^\perp|^2}
{\langle t-r \rangle^2}e^qdx\nonumber\\
&&\leq
\eta G_{\kappa} +
C_{\eta}\langle t \rangle ^{-1}\mathcal{E}_\kappa E_{\kappa-3}^{\frac 12}.
\end{eqnarray*}
Integrating   both sides of the above inequality in time on $[0,t)$, we get
\begin{align} \label{ModHighEneEs}
&\frac 12
\int_{\mathbb{R}^2}(|\nabla V^{(\alpha,a)}(t)|^2
    +|\nabla H^{(\alpha,a)}(t)|^2)e^qdx
+\frac 12\mu \int_0^t\int_{\mathbb{R}^2} |\Delta V^{(\alpha,a)}(\tau)|^2 e^qdxd\tau
\nonumber\\
&-2\mu\sum\limits_{l=0}^{\alpha-1}
(C_\alpha^l)^2\int_0^t\int_{\mathbb{R}^2}
|\Delta V^{(l,a)}(\tau)|^2e^qdxd\tau
-\mu\int_0^t\int_{\mathbb{R}^2}
|\nabla V^{(\alpha,a)}(\tau)|^2 e^q dxd\tau \nonumber\\
&+\frac 12  \int_0^t\int_{\mathbb{R}^2}\frac{|\nabla V^{(\alpha,a)}(\tau)
+\nabla H^{(\alpha,a)}(\tau)\cdot\omega|^2
+|\nabla H^{(\alpha,a)}(\tau)\cdot\omega^\perp|^2}
{\langle \tau-r \rangle^2}e^qdxd\tau \nonumber\\
&\leq \eta \int_0^t G_\kappa(\tau)d\tau+C_{\eta}
\int_0^t \langle \tau \rangle ^{-1}\mathcal{E}_\kappa(\tau) E_{\kappa-3}^{\frac 12}(\tau)d\tau
\nonumber\\
&\quad +\frac 12\int_{\mathbb{R}^2}(|\nabla V^{(\alpha,a)}(0)|^2
    +|\nabla H^{(\alpha,a)}(0)|^2)e^qdx .
\end{align}
 We notice that the  two  viscous terms on the second line  have a negative sign.
 These two terms will be absorbed by the viscous dissipation coming from the lower-orders and the standard energy estimate of the next subsection  (see \eqref{HighEnergyEs2}).

\subsection{Highest-order Standard Energy Estimate}
Now we proceed with  the highest-order standard energy estimate.
Here, we have one less regular derivative to estimate and  we will not  use the ghost weight.
Hence,  we don't need to handle the commutators between the ghost weight and the viscosity terms,
but only handle the commutators between the scaling operator and the viscosity terms.
We remark that  here the estimate of the nonlinearities is  slightly different because of the absence of
the extra  regular derivative.

Let $\kappa \geq12$, $|\alpha|+|a| \leq \kappa-1$ and
let us take  the $L^2$ inner product of the first and the second equation of \eqref{VisElas-Gamma}
with $V^{(\alpha,a)}$ and $H^{(\alpha,a)}$, respectively. Then adding  up the resulting equations, we get
\begin{eqnarray}\label{EN-EsH}
&&\frac 12\frac{d}{dt}
\int_{\mathbb{R}^2}(|V^{(\alpha,a)}|^2+|H^{(\alpha,a)}|^2)dx
-\int_{\mathbb{R}^2}\mu\Delta\sum\limits_{l=0}^{\alpha} C_\alpha^l (-1)^{\alpha-l}V^{(l,a)}
\cdot V^{(\alpha,a)} dx \nonumber\\
&&=\int_{\mathbb{R}^2} f^1_{\alpha a}  V^{(\alpha,a)}dx
+  \int_{\mathbb{R}^2} f^2_{\alpha a} \cdot H^{(\alpha,a)}dx.
\end{eqnarray}
Since the estimate for the first term and the second one on the right hand side of \eqref{EN-EsH}
are very similar, we only give the details for
the first  one.

It follows easily from the $L^2$ boundness of the Riesz transform that
\begin{align*}
\int_{\mathbb{R}^2} f^1_{\alpha a}  V^{(\alpha,a)}dx
\lesssim
\sum_{\tiny\begin{matrix} \beta+\gamma=\alpha\\ b + c = a \end{matrix}}
\sum_{1\leq i,j \leq 2}
\|\nabla_i V^{(\beta,b)} \nabla_j V^{(\gamma,c)}
  -\nabla_i H^{(\beta,b)}\cdot \nabla_j H^{(\gamma,c)}\|_{L^2}
\|V^{(\alpha,a)}\|_{L^2}.
\end{align*}

In the region  $\{  r\leq \langle t\rangle/2\} $, we have
\begin{align*}
&\sum_{\tiny\begin{matrix} \beta+\gamma=\alpha\\ b + c = a \end{matrix}}
\sum_{1\leq i,j \leq 2}
\|\nabla_i V^{(\beta,b)} \nabla_j V^{(\gamma,c)}
  -\nabla_i H^{(\beta,b)}\cdot \nabla_j H^{(\gamma,c)}\|_{L^2(r\leq \langle t\rangle/2)}\\
&\ \lesssim
\sum_{\tiny\begin{matrix} \beta+\gamma=\alpha\\ b + c = a \end{matrix}}
\big\||\nabla U^{(\beta,b)}||\nabla U^{(\gamma,c)}|
 \big\|_{L^2(r\leq \langle t \rangle/2)} \\
&\ \lesssim
\sum_{\tiny\begin{matrix}b + c = a,\beta+\gamma=\alpha
 \\|b|+|\beta|\geq|c|+|\gamma|\end{matrix}}
\big\| |\nabla U^{(\beta,b)}| |\nabla U^{(\gamma,c)}|
      \big\|_{L^2(r\leq \langle t \rangle/2)}.
\end{align*}
Here,  we used the symmetry between the index
in the last inequality. Note that due to the derivative loss
$X_{\kappa-2}\lesssim E_{\kappa-1}$ and
since $|b|+|\beta|\geq |c|+|\gamma|$,
one has $|\gamma|+|c|+3\leq [(|\alpha|+|a|)/2]+3\leq \kappa-4$.
By \eqref{K-S-3} and Lemma \ref{lemmaWeighted-Energy},
the above quantities can be  controlled by
\begin{align*}
&\sum_{\tiny\begin{matrix}b + c = a,\beta+\gamma=\alpha
 \\|b|+|\beta|\geq|c|+|\gamma|\end{matrix}}
\langle t \rangle^{-1} \| \nabla U^{(\beta,b)}\|_{L^2}
   \|\langle t \rangle \nabla U^{(\gamma,c)}
   \|_{L^\infty(r\leq \langle t \rangle/2)} \\
&\quad\lesssim\sum_{\tiny\begin{matrix}b + c = a,\beta+\gamma=\alpha
 \\|b|+|\beta|\geq|c|+|\gamma|\end{matrix}}
\langle t \rangle^{-1}
\mathcal{E}^{\frac{1}{2}}_{|\beta|+|b|+1}X^{\frac{1}{2}}_{|\gamma|+|c|+3} \\
&\quad\leq \langle t \rangle^{-1} \mathcal{E}^{\frac{1}{2}}_{\kappa} X^{\frac{1}{2}}_{\kappa-4}
\lesssim \langle t \rangle^{-1} \mathcal{E}^{\frac{1}{2}}_{\kappa} E^{\frac{1}{2}}_{\kappa-3}.
\end{align*}
In the region  $\{  r\geq \langle t \rangle /2\} $,
we need to employ the null structure to get  extra  time  decay.
An important trick here is that we need to use the appropriate null structure.
The situation is similar to the estimate of $I_2$ in  the last subsection.
A natural idea is to use Lemma \ref{Lem-Good-f}. But this doesn't work due to the derivative loss $Y_{\kappa-2}\lesssim E_{\kappa-1}$.
To solve this problem, we combine the highest-order standard energy estimate and the highest-order modified energy estimate.
More precisely,  we will use the good term $G_\kappa$  that comes from the
ghost weight energy obtained in the modified energy estimate.

Employ the orthogonal decomposition into radial and transverse directions,
we have
\begin{align*}
&\nabla_i V^{(\beta,b)}\nabla_jV^{(\gamma,c)}
-\nabla_i H^{(\beta,b)}\cdot
\nabla_j H^{(\gamma,c)} \nonumber\\
&=\nabla_i V^{(\beta,b)}\nabla_jV^{(\gamma,c)}
-\nabla_i H^{(\beta,b)}\cdot\omega
\nabla_j H^{(\gamma,c)}\cdot\omega
 -\nabla_i H^{(\beta,b)}\cdot\omega^\perp
\nabla_j H^{(\gamma,c)}\cdot\omega^\perp\nonumber\\
&=(\nabla_i V^{(\beta,b)}+\nabla_i H^{(\beta,b)}\cdot\omega)
\nabla_jV^{(\gamma,c)}
-\nabla_i H^{(\beta,b)}\cdot\omega
(\nabla_jV^{(\gamma,c)}
+\nabla_j H^{(\gamma,c)}\cdot\omega) \nonumber\\
&\quad -\nabla_i H^{(\beta,b)}\cdot\omega^\perp
\nabla_j H^{(\gamma,c)}\cdot\omega^\perp.
\end{align*}
Consequently,
\begin{align}\label{est-good-f}
&\sum_{\tiny\begin{matrix} \beta+\gamma=\alpha\\ b + c = a \end{matrix}}
\sum_{1\leq i,j \leq 2}
\|\nabla_i V^{(\beta,b)} \nabla_j V^{(\gamma,c)}
  -\nabla_i H^{(\beta,b)}\cdot \nabla_j H^{(\gamma,c)}\|_{L^2(r\geq \langle t \rangle /2)}
\|V^{(\alpha,a)}\|_{L^2} \nonumber\\
&\lesssim
\sum_{\tiny\begin{matrix} \beta+\gamma=\alpha\\ b + c = a \end{matrix}}
\sum_{1\leq i \leq 2}
\big\| |\nabla_i H^{(\beta,b)}\cdot \omega+\nabla_i V^{(\beta,b)}|
|\nabla U^{(\gamma,c)}| \big\|_{L^2(r\geq \langle t \rangle/2)} E^{\frac12}_{\kappa-1} \nonumber\\
&\quad+\sum_{\tiny\begin{matrix} \beta+\gamma=\alpha\\ b + c = a \end{matrix}}
\sum_{1\leq i,j \leq 2}
\|\nabla_i H^{(\beta,b)}\cdot\omega^\perp
\nabla_j H^{(\gamma,c)}\cdot\omega^\perp\|_{L^2(r\geq \langle t \rangle/2)} E^{\frac12}_{\kappa-1}.
\end{align}
In the above inequality, we  used the symmetry between the index $b$ and $c$ and the symmetry between $\beta$ and $\gamma$. For \eqref{est-good-f}, if $|\beta|+|b|\geq |\gamma|+|c|$, by \eqref{K-S-2}, Lemma \ref{lemmaWeighted-Energy} and Lemma \ref{LemmaGood-L-infy}, it can be further bounded by
\begin{align*}
&\sum_{\tiny\begin{matrix} \beta+\gamma=\alpha, b + c = a\\ |\beta|+|b|\geq |\gamma|+|c| \end{matrix}}
\sum_{1\leq i \leq 2}
\big\|\frac{\nabla_iV^{(\beta,b)}+\nabla_iH^{(\beta,b)}\cdot\omega}{\langle t-r \rangle}\big\|_{L^2}
\|\langle t-r \rangle\nabla U^{(\gamma,c)}\|_{L^\infty(r\geq \langle t \rangle/2)} E^{\frac12}_{\kappa-1}\\
&+ \sum_{\tiny\begin{matrix} \beta+\gamma=\alpha, b + c = a\\ |\beta|+|b|\geq |\gamma|+|c| \end{matrix}}
\sum_{1\leq i \leq 2}
\|\nabla H^{(\beta,b)}\|_{L^2}
\|\nabla_i H^{(\gamma,c)}\cdot\omega^\perp\|_{L^\infty(r\geq \langle t \rangle/2)}
E^{\frac12}_{\kappa-1}\\
&\qquad \lesssim
\eta G_\kappa
+C_{\eta}\langle t \rangle ^{-1} E_{\kappa-1} E_{\kappa-3}
+\langle t \rangle ^{-\frac 32}\mathcal{E}_\kappa^{\frac12} E_{\kappa-1}^{\frac12} E_{\kappa-3}^{\frac12}.
\end{align*}
If $|\beta|+|b|< |\gamma|+|c|$,
we can repeat a  similar procedure to deduce that
the right hand side of \eqref{est-good-f} can be bounded by
\begin{align*}
\langle t \rangle ^{-\frac 32}\mathcal{E}_\kappa^{\frac12} E_{\kappa-1}^{\frac12} E_{\kappa-3}^{\frac12}.
\end{align*}
It then follows by gathering the above estimates   that
\begin{align*}
\int_{\mathbb{R}^2} f^1_{\alpha a}  V^{(\alpha,a)}dx
\lesssim  \eta G_\kappa
+C_{\eta}\langle t \rangle ^{-1}(\mathcal{E}_{\kappa}+E_{\kappa-1} )E_{\kappa-3}^{\frac 12}.
\end{align*}

The estimate of $\int_{\mathbb{R}^2} f^2_{\alpha a} \cdot H^{(\alpha,a)}dx$ is similar to $\int_{\mathbb{R}^2} f^1_{\alpha a} \cdot V^{(\alpha,a)}dx$.
The key point is to explore the appropriate null structure for $f^2_{\alpha a}$.
We will prove that
\begin{align} \label{f2est}
|f^2_{\alpha a}|\lesssim
&\sum\limits_{\tiny\begin{matrix}b+c=a\\\beta+\gamma=\alpha\end{matrix}}
|\nabla^\perp_j H^{(\beta,b)}\cdot\omega^\perp
\nabla_j V^{(\gamma,c)}| \nonumber\\
&+\sum\limits_{\tiny\begin{matrix}b+c=a\\\beta+\gamma=\alpha\end{matrix}}
|(\nabla^\perp_j H^{(\beta,b)}\cdot\omega
+\nabla^\perp_j V^{(\beta,b)})
\nabla_j V^{(\gamma,c)}\omega|,
\end{align}
from which we can   deduce that
\begin{align*}
\int_{\mathbb{R}^2} f^2_{\alpha a}  H^{(\alpha,a)}dx
\lesssim  \eta G_\kappa
+C_{\eta}\langle t \rangle ^{-1}(\mathcal{E}_{\kappa}+E_{\kappa-1} )E_{\kappa-3}^{\frac 12}.
\end{align*}
Hence in the sequel, we only show \eqref{f2est}.

Employing the orthogonal decomposition onto radial and transverse directions,
we have
\begin{align*}
f^2_{\alpha a}&=\sum\limits_{\tiny\begin{matrix}b+c=a\\\beta+\gamma=\alpha\end{matrix}}
C_{\alpha}^\beta C_a^b
(\nabla^\perp_j H^{(\beta,b)}
\nabla_j V^{(\gamma,c)}) \\
&=\sum\limits_{\tiny\begin{matrix}b+c=a\\\beta+\gamma=\alpha\end{matrix}}
C_{\alpha}^\beta C_a^b
(\nabla^\perp_j H^{(\beta,b)}\cdot\omega
\nabla_j V^{(\gamma,c)})\omega \\
&+\sum\limits_{\tiny\begin{matrix}b+c=a\\\beta+\gamma=\alpha\end{matrix}}
C_{\alpha}^\beta C_a^b
(\nabla^\perp_j H^{(\beta,b)}\cdot\omega^\perp
\nabla_j V^{(\gamma,c)})\omega^\perp.
\end{align*}
For the first line on the right hand side in the above, we reorganize them as
\begin{align*}
&\sum\limits_{\tiny\begin{matrix}b+c=a\\\beta+\gamma=\alpha\end{matrix}}
C_{\alpha}^\beta C_a^b
(\nabla^\perp_j H^{(\beta,b)}\cdot\omega
\nabla_j V^{(\gamma,c)})\omega\\
&=\sum\limits_{\tiny\begin{matrix}b+c=a\\\beta+\gamma=\alpha\end{matrix}}
C_{\alpha}^\beta C_a^b
(\nabla^\perp_j H^{(\beta,b)}\cdot\omega
+\nabla^\perp_j V^{(\beta,b)})
\nabla_j V^{(\gamma,c)}\omega.
\end{align*}
Here we have used the fact that
\begin{equation*}
\sum\limits_{\tiny\begin{matrix}b+c=a\\\beta+\gamma=\alpha\end{matrix}}
C_{\alpha}^\beta C_a^b
\nabla^\perp_j V^{(\beta,b)}
\nabla_j V^{(\gamma,c)}\omega=0.
\end{equation*}
This yields  \eqref{f2est}.

Finally, we gather our estimate for \eqref{EN-EsH} to derive that
\begin{eqnarray}\label{EN-Es3}
&&\frac 12\frac{d}{dt}
\int_{\mathbb{R}^2}(|V^{(\alpha,a)}|^2
    +|H^{(\alpha,a)}|^2)dx\nonumber\\
&&-\int_{\mathbb{R}^2}
\mu\Delta\sum\limits_{l=0}^{\alpha} C_\alpha^l (-1)^{\alpha-l}V^{(l,a)}
\cdot V^{(\alpha,a)} dx \nonumber\\
&&\leq
\eta G_\kappa+C_{\eta}\langle t \rangle ^{-1}
(\mathcal{E}_{\kappa}+E_{\kappa-1} )E_{\kappa-3}^{\frac 12}.
\end{eqnarray}
For the diffusion terms in \eqref{EN-Es3}, we estimate them as follows:
\begin{align}\label{min-ener}
&-\int_{\mathbb{R}^2}
\mu\Delta\sum\limits_{l=0}^{\alpha} C_\alpha^l (-1)^{\alpha-l}V^{(l,a)}
\cdot V^{(\alpha,a)} dx\nonumber\\
&=\mu\int_{\mathbb{R}^2} |\nabla V^{(\alpha,a)}|^2 dx
+\int_{\mathbb{R}^2} \mu\nabla\sum\limits_{l=0}^{\alpha-1}C_\alpha^l (-1)^{\alpha-l}V^{(l,a)}
\cdot \nabla V^{(\alpha,a)} dx \nonumber\\
&\geq \frac12\mu\int_{\mathbb{R}^2} |\nabla V^{(\alpha,a)}|^2 dx
-\frac12 \mu\sum\limits_{l=0}^{\alpha-1} (C_\alpha^l)^2\int_{\mathbb{R}^2} |\nabla V^{(l,a)}|^2 dx .
\end{align}
Hence  we can deduce that
\begin{eqnarray*}
&&\frac 12\frac{d}{dt}
\int_{\mathbb{R}^2}(|V^{(\alpha,a)}|^2
    +|H^{(\alpha,a)}|^2)dx\nonumber\\
&&+\frac12\mu\int_{\mathbb{R}^2} |\nabla V^{(\alpha,a)}|^2 dx
-\frac12 \mu\sum\limits_{l=0}^{\alpha-1} (C_\alpha^l)^2\int_{\mathbb{R}^2} |\nabla V^{(l,a)}|^2 dx  \nonumber\\
&&\leq  \eta G_\kappa(t)+C_{\eta}\langle t \rangle ^{-1}
(\mathcal{E}_{\kappa}+E_{\kappa-1} )E_{\kappa-3}^{\frac 12}.
\end{eqnarray*}
Integrating   both sides of the above inequality in time over $[0,t)$, we get
\begin{eqnarray*}
&&\frac 12
\int_{\mathbb{R}^2}(|V^{(\alpha,a)}(t)|^2
    +|H^{(\alpha,a)}(t)|^2)dx\nonumber\\
&&+\frac12\mu\int_0^t\int_{\mathbb{R}^2} |\nabla V^{(\alpha,a)}(\tau)|^2 dxd\tau
-\frac12\mu \sum\limits_{l=0}^{\alpha-1} (C_\alpha^l)^2\int_0^t\int_{\mathbb{R}^2} |\nabla V^{(l,a)}(\tau)|^2 dxd\tau  \nonumber\\
&&\leq \frac 12
\int_{\mathbb{R}^2}(|V^{(\alpha,a)}(0)|^2
    +|H^{(\alpha,a)}(0)|^2)dx\nonumber\\
&&+\eta \int_0^t G_\kappa(\tau)d\tau+
C_{\eta} \int_0^t \langle \tau \rangle ^{-1}
\big(\mathcal{E}_{\kappa}(\tau)+E_{\kappa-1}(\tau)\big)E_{\kappa-3}^{\frac 12}(\tau)d\tau.
\end{eqnarray*}
Using Lemma \ref{lemItera}, we deduce that
\begin{align}\label{HighEnergyEst}
&\int_{\mathbb{R}^2}(|V^{(\alpha,a)}(t)|^2+|H^{(\alpha,a)}(t)|^2)dx
+\mu\int_0^t\int_{\mathbb{R}^2} |\nabla V^{(\alpha,a)}(\tau)|^2 dxd\tau \nonumber\\
&\lesssim E_{\kappa-1}(0)
+\eta \int_0^t G_\kappa(\tau)d\tau+
C_{\eta} \int_0^t \langle \tau \rangle ^{-1}
\big(\mathcal{E}_{\kappa}(\tau)+E_{\kappa-1}(\tau)\big)E_{\kappa-3}^{\frac 12}(\tau)d\tau.
\end{align}

Now we are going to combine the highest-order modified energy estimate of the previous subsection
 with the standard one
to deal with the diffusion energy with  the negative sign in \eqref{ModHighEneEs}.
Multiplying \eqref{HighEnergyEst} by $4\max_{\sigma\in\mathbb{R}}{e^{q(\sigma)}}$,
then adding \eqref{ModHighEneEs},
we get
\begin{align}\label{HighEnergyEs2}
&\int_{\mathbb{R}^2}(|\nabla V^{(\alpha,a)}(t)|^2
    +|\nabla H^{(\alpha,a)}(t)|^2)e^qdx
+\int_{\mathbb{R}^2}(| V^{(\alpha,a)}(t)|^2
    +| H^{(\alpha,a)}(t)|^2)dx \nonumber\\
&+\mu \int_0^t\int_{\mathbb{R}^2} |\Delta V^{(\alpha,a)}(\tau)|^2 e^qdxd\tau
-\sum\limits_{l=0}^{\alpha-1}
\mu C\int_{\mathbb{R}^2}
|\Delta V^{(l,a)}|^2e^qdx \nonumber\\
&+\mu  \int_0^t\int_{\mathbb{R}^2} |\nabla V^{(\alpha,a)}(\tau)|^2 dxd\tau
\nonumber\\
&+\sum_{1\leq k \leq 2} \int_0^t\int_{\mathbb{R}^2}\frac{|\nabla_kV^{(\alpha,a)}(\tau)
+\nabla_kH^{(\alpha,a)}(\tau)\cdot\omega|^2
+|\nabla_kH^{(\alpha,a)}(\tau)\cdot\omega^\perp|^2}
{\langle t-r \rangle^2}e^qdxd\tau \nonumber\\
&\lesssim \eta \int_0^t G_\kappa(\tau)d\tau+C_{\eta}
\int_0^t \langle \tau \rangle ^{-1} (\mathcal{E}_\kappa(\tau)+E_{\kappa-1} )E_{\kappa-3}^{\frac 12}(\tau)d\tau
+\mathcal{E}_{\kappa}(0)+E_{\kappa-1}(0) .
\end{align}
Summing over  all $|\alpha|+|a|\leq \kappa-1$  and using  Lemma \ref{lemItera} to handle the negative sign diffusion energy
on the left hand side of \eqref{HighEnergyEs2}, we get that
\begin{align*}
&\mathcal{E}_{\kappa}(t) + E_{\kappa-1}(t)
+ \int_0^t G_\kappa(\tau)d\tau\nonumber\\ &\quad\quad
+\ \mu\sum_{|\alpha|+|a|\leq \kappa-1}
\int_0^t\int_{\mathbb{R}^2} |\Delta V^{(\alpha,a)}(\tau)|^2
+ |\nabla V^{(\alpha,a)}(\tau)|^2 dxd\tau
\nonumber\\
&\lesssim \eta \int_0^t G_\kappa(\tau)d\tau+C_{\eta}
\int_0^t \langle \tau \rangle ^{-1}(\mathcal{E}_\kappa(\tau)+E_{\kappa-1}(\tau))
 E_{\kappa-3}^{\frac 12}(\tau)d\tau\nonumber\\
&\quad\quad +\ \mathcal{E}_{\kappa}(0)+E_{\kappa-1}(0) .
\end{align*}
Taking  $\eta$ small enough, we conclude that
\begin{align*}
&\mathcal{E}_{\kappa}(t) + E_{\kappa-1}(t)
+ \int_0^t G_\kappa(\tau)d\tau\nonumber\\
&\quad\quad +\ \sum_{|\alpha|+|a|\leq \kappa-1}
\mu \int_0^t\int_{\mathbb{R}^2} |\Delta V^{(\alpha,a)}(\tau)|^2
+ |\nabla V^{(\alpha,a)}(\tau)|^2 dxd\tau\nonumber\\
&\lesssim
\int_0^t \langle \tau \rangle ^{-1}(\mathcal{E}_\kappa(\tau)+E_{\kappa-1}(\tau))
 E_{\kappa-3}^{\frac 12}(\tau)d\tau
+\mathcal{E}_{\kappa}(0)+E_{\kappa-1}(0) .
\end{align*}
This is the desired \textit{a priori} estimate \eqref{PriorEsti1}.

\subsection{Lower-order Standard Energy Estimate}
In this last subsection, we present the lower-order standard energy estimate.
A  trick here is that we need to earn the maximum decay in time.
In order to achieve this, we are going to take full advantage of the inherent strong null structure.

Let $|\alpha|+|a| \leq \kappa-3$. Taking the $L^2$ inner product of the
first and  second equation of \eqref{VisElas-Gamma} with $V^{(\alpha,a)}$ and $H^{(\alpha,a)}$, respectively,
we get
\begin{align*}
&\frac 12\frac{d}{dt}
\int_{\mathbb{R}^2}(|V^{(\alpha,a)}|^2+|H^{(\alpha,a)}|^2)dx
-\int_{\mathbb{R}^2}\mu\Delta\sum\limits_{l=0}^{\alpha} C_\alpha^l (-1)^{\alpha-l}V^{(l,a)}
\cdot V^{(\alpha,a)} dx \nonumber\\
&=\int_{\mathbb{R}^2} f^1_{\alpha a} V^{(\alpha,a)}
+ f^2_{\alpha a} \cdot H^{(\alpha,a)} dx\\
&\leq   \|f_{ij}^{\alpha a}\|_{L^2} \|V^{(\alpha,a)}\|_{L^2} + \|f^2_{\alpha a}\|_{L^2} \|H^{(\alpha,a)} \|_{L^2}
.
\end{align*}
We have used the $L^2$ boundness of the Riesz transform in the last bound.
Here we recall that $f^{\alpha a}_{ij}$ was  defined in \eqref{fij}.

Now we are going to treat $\|f_{ij}^{\alpha a}\|_{L^2}$.
First, we have
\begin{equation*}
\| f_{ij}^{\alpha a}\|_{L^2(r\leq \langle t \rangle/2)}
\lesssim
\sum_{\tiny{\begin{matrix}b + c = a,\\ \beta+\gamma=\alpha \end{matrix}}}
\big\| |\nabla V^{(\beta,b)}| |\nabla V^{(\gamma,c)}|+
|\nabla H^{(\beta,b)}| |\nabla H^{(\gamma,c)}|
\big\|_{L^2(r\leq \langle t \rangle/2)}.
\end{equation*}
Since the index $(\beta,b)$ and $(\gamma,c)$ in the above quantity are symmetric,
we can  assume  that $|\gamma|+|c|\leq|\beta|+|b|$ without loss of generality.
Thus $|\gamma|+|c|+3\leq [(|\alpha|+|a|)/2]+3\leq \kappa-4$.
In view of \eqref{K-S-3} and Lemma \ref{lemmaWeighted-Energy}, we get
\begin{align*}
&\| f_{ij}^{\alpha a}\|_{L^2(r\leq \langle t \rangle/2)}
 \lesssim
\sum_{\tiny\begin{matrix}b + c = a,\beta+\gamma=\alpha
 \\ |c|+|\gamma| \leq|b|+|\beta|\end{matrix}}
 \big\|  |\nabla U^{(\beta,b)}| |\nabla U^{(\gamma,c)}|
   \big\|_{L^2(r\leq \langle t \rangle/2)}\\
&\lesssim
\sum_{\tiny\begin{matrix}b + c = a,\beta+\gamma=\alpha
 \\ |c|+|\gamma| \leq|b|+|\beta|\end{matrix}}
\big\|\langle t \rangle^{-2}  \langle t-r \rangle^2
|\nabla U^{(\beta,b)}| |\nabla U^{(\gamma,c)}|
\big\|_{L^2(r\leq \langle t \rangle/2)} \\
&\lesssim\sum_{\tiny\begin{matrix}b + c = a,\beta+\gamma=\alpha
 \\ |c|+|\gamma| \leq|b|+|\beta|\end{matrix}}
\langle t \rangle^{-2}
X^{\frac{1}{2}}_{|\beta|+|b|+1}X^{\frac{1}{2}}_{|\gamma|+|c|+3}
\leq \langle t \rangle^{-2} X^{\frac{1}{2}}_{\kappa-2}X^{\frac{1}{2}}_{\kappa-4} \\
&\lesssim \langle t \rangle^{-2} E^{\frac 12}_{\kappa-1}(t) E^{\frac 12}_{\kappa-3}(t).
\end{align*}
Moreover, in the region  $\{ r\geq \langle t \rangle /2\} $, by \eqref{good-f1}, we get
\begin{align}\label{est-egood-f}
\|f^{ij}_{\alpha a}\|_{L^2(r\geq \langle t \rangle/2)}
& \lesssim \big\| \frac 1r
\sum\limits_{\tiny\begin{matrix}|b|+|c|\leq |a|\\ |\beta|+|\gamma|\leq |\alpha|\end{matrix}}
|V^{(|\beta|,|b|+1)}  V^{(|\gamma|,|c|+1)}|  + |H^{(|\beta|,|b|+1)}| |H^{|\gamma|,|c|+1}|    \big\|_{L^2(r\geq \langle t \rangle/2)}
\nonumber\\
&+\big\|\sum\limits_{\tiny\begin{matrix}b+c=a\\ \beta+\gamma=\alpha\end{matrix}}
|\partial_rV^{(\beta,b)}+\partial_rH^{(\beta,b)}\cdot\omega|
(|\nabla V^{(\gamma,c)}|+|\nabla H^{(\gamma,c)}|)\big\|_{L^2(r\geq \langle t \rangle/2)}
\nonumber\\
&+\|\sum\limits_{\tiny\begin{matrix}b+c=a\\ \beta+\gamma=\alpha\end{matrix}}
\partial_rH^{(\beta,b)}\cdot\omega^\perp \partial_rH^{(\gamma,c)}\cdot\omega^\perp
\|_{L^2(r\geq \langle t \rangle/2)}.
\end{align}
For the first line on the right hand side of \eqref{est-egood-f}, by the
symmetry between    the index $(\beta,b)$ and $(\gamma,c)$, we can  assume  that
$|b|+|\beta|\leq|c|+|\gamma|$. Thus $|b|+|\beta|+3 \leq [( |a|+|\alpha|)/2]+3\leq \kappa-4 $. By \eqref{K-S-1},
the first line can be   estimated by
\begin{align*}
& \langle t\rangle^{-1}
\sum_{\tiny\begin{matrix}|b|+|c|+|\beta|+|\gamma|\leq |a|+|\alpha|
 \\|b|+|\beta|\leq|c|+|\gamma|\end{matrix}}
\|U^{(|\beta|,|b|+1)}\|_{L^\infty(r\geq \langle t\rangle/2) } \|U^{(|\gamma|,|c|+1)}\|_{L^2 }\\
&\lesssim \langle t\rangle^{-\frac 32}
\sum_{\tiny\begin{matrix}|b|+|c|+|\beta|+|\gamma|\leq |a|+|\alpha|
 \\|b|+|\beta|\leq|c|+|\gamma|\end{matrix}} E_{|c|+|\gamma|+1}^{\frac 12}E_{|b|+|\beta|+3}^{\frac 12}
\lesssim  \langle t\rangle^{-\frac 32}E_{\kappa-3}^{\frac 12} E_{\kappa-1}^{\frac 12}.
\end{align*}
For the second line on the right-hand side of \eqref{est-egood-f}, if $|b|+|\beta|\geq|c|+|\gamma|$,
then by Lemma \ref{lemmaWeighted-Energy}, we have
\begin{align*}
&\big\|\sum\limits_{\tiny\begin{matrix}b+c=a,\beta+\gamma=\alpha\\|b|+|\beta|\geq|c|+|\gamma| \end{matrix}}
(\partial_rV^{(\beta,b)}+\partial_rH^{(\beta,b)}\cdot\omega)
(|\nabla V^{(\gamma,c)}|+|\nabla H^{(\gamma,c)}) \big\|_{L^2(r\geq \langle t \rangle/2)}\nonumber\\
&\lesssim \langle t \rangle^{-1}
\sum\limits_{\tiny\begin{matrix}b+c=a,\beta+\gamma=\alpha\\|b|+|\beta|\geq|c|+|\gamma| \end{matrix}}
\| r(\partial_rV^{(\beta,b)}+\partial_rH^{(\beta,b)}\cdot\omega) \|_{L^2}
\|\nabla U^{(\gamma,c)}\|_{L^\infty(r\geq \langle t \rangle/2)}\nonumber\\
&\lesssim \langle t \rangle^{-\frac32}
\sum\limits_{\tiny\begin{matrix}b+c=a,\beta+\gamma=\alpha\\|b|+|\beta|\geq|c|+|\gamma| \end{matrix}}
Y_{|\beta|+|b|+1}^{\frac 12}  E_{|\gamma|+c+3}^{\frac 12}
\lesssim  \langle t\rangle^{-\frac 32}E_{\kappa-3}^{\frac 12} E_{\kappa-1}^{\frac 12}.
\end{align*}
Otherwise, if $|b|+|\beta|\leq|c|+|\gamma|$, then by \eqref{der-decomp} and Lemma \ref{LemmaGood-L-infy}, we have
\begin{align*}
&\sum\limits_{\tiny\begin{matrix}b+c=a,\beta+\gamma=\alpha\\ |b|+|\beta|\leq|c|+|\gamma| \end{matrix}}
\big\| (\partial_rV^{(\beta,b)}
+\partial_rH^{(\beta,b)}\cdot\omega)
(|\nabla V^{(\gamma,c)}|+|\nabla H^{(\gamma,c)}) \big\|_{L^2(r\geq \langle t \rangle/2)}\nonumber\\
&\lesssim
\sum\limits_{\tiny\begin{matrix}b+c=a,\beta+\gamma=\alpha\\ |b|+|\beta|\leq|c|+|\gamma| \end{matrix}}
\|\partial_rV^{(\beta,b)}
+\partial_rH^{(\beta,b)}\cdot\omega \|_{L^\infty(r\geq \langle t \rangle/2)}
\| \nabla U^{(\gamma,c)}\|_{L^2}\nonumber\\
&\lesssim   \langle t\rangle^{-\frac 32}E_{\kappa-3}^{\frac 12}  E_{\kappa-1}^{\frac 12}.
\end{align*}
The estimate of the third line of \eqref{est-egood-f} can be treated exactly as the second line.
Thus we conclude by gathering the estimates  that
\begin{align*}
&\| f_{ij}^{\alpha a}\|_{L^2}\lesssim
 \langle t \rangle^{-\frac32} E^{\frac 12}_{\kappa-1}(t) E^{\frac 12}_{\kappa-3}(t).
\end{align*}

Next for $\|f^2_{\alpha a}\|_{L^2}$, we can use the same strategy as the estimate of $\| f_{ij}^{\alpha a}\|_{L^2}$
to get the same bound.
Thus, we gather all the estimate in this subsection to deduce that
\begin{align*}
&\frac 12\frac{d}{dt}
\int_{\mathbb{R}^2}(|V^{(\alpha,a)}|^2+|H^{(\alpha,a)}|^2)dx\\
&-\int_{\mathbb{R}^2}\mu\Delta\sum\limits_{l=0}^{\alpha} C_\alpha^l (-1)^{\alpha-l}V^{(l,a)}
\cdot V^{(\alpha,a)} dx
\lesssim \langle t\rangle^{-\frac 32}E_{\kappa-3} E_{\kappa-1}^{\frac 12}.
\end{align*}
For the viscosity terms, by \eqref{min-ener}, we get
\begin{align*}
&\frac 12\frac{d}{dt}
\int_{\mathbb{R}^2}(|V^{(\alpha,a)}|^2+|H^{(\alpha,a)}|^2)dx \\
&+\frac 12\mu\int_{\mathbb{R}^2} |\nabla V^{(\alpha,a)}|^2 dx
-\sum\limits_{l=0}^{\alpha-1}
\mu C\int_{\mathbb{R}^2}
|\nabla V^{(l,a)}|^2dx
\lesssim \langle t\rangle^{-\frac 32}E_{\kappa-3} E_{\kappa-1}^{\frac 12}.
\end{align*}
We can integrate in time on $[0,t)$ over the above inequality,
then use Lemma \ref{lemItera} to absorb the diffusion energy with negative sign.
Finally, summing over $|\alpha|+|a| \leq \kappa-3$, we get
\begin{align*}
&E_{\kappa-3}(t)+ \sum_{|\alpha| +|a| \leq \kappa-3}
\mu\int_0^t\int_{\mathbb{R}^2} |\nabla V^{(\alpha,a)}(\tau)|^2 dxd\tau\\
&\lesssim E_{\kappa-3}(0)+\int_0^t \langle \tau\rangle^{-\frac 32}
E_{\kappa-3}(\tau) E_{\kappa-1}^{\frac 12}(\tau) d\tau.
\end{align*}
This is the desired \textit{a priori} estimate \eqref{PriorEsti2}.

\section{Acknowledgement}
Part of this work was carried out while Y. Cai was visiting the Courant Institute.
He would like to thank the hospitality of the institute.
The first two authors were in part supported by the NSFC grant (Grant No. 11421061 and 11222107),
the National Support Program for Young Top-Notch Talents, the Shanghai Shu Guang project,
the Shanghai Talent Development Fund, and SGST 09DZ2272900.
Cai was also sponsored by the China Scholarship Council (No. 201606100111) for one year
at New York University, Courant Institute of Mathematics Sciences. Nader Masmoudi was in part supported by  NSF grant DMS-1211806. F.-H. Lin was in part supported by the NSF grant DMS-1501000.

\end{document}